\newtheorem{theorem}{Theorem}[section]
\newtheorem{proposition}{Proposition}[section]
\newtheorem{corollary}{Corollary}[section]
\newtheorem{lemma}{Lemma}[section]
\newtheorem{remark}{Remark}[section]
\font \tenmat = msbm10 \font \sevenmat = msbm7 \font \fivemat =
\def \mat
\def\NN{{\mat N}} \def\ZZ{{\mat Z}}  \def\RR{{\mat R}} \def\CC{{\mat C}}       
\def\cC{{\mathcal C}}
\def\Res{{\rm Res\,}}\def\Log{{\log\,}}
\title[Pantograph Equation and $\theta$-modular formula]{Analytic continuation of solutions of the pantograph equation by means of $\theta$-modular formula}
\author{Changgui ZHANG}
\address{ Laboratoire P. Painlev\'e (UMR -- CNRS 8524), UFR
Math., Universit\'e de Lille 1, Cit\'e scientifique, 59655
Villeneuve d'Ascq cedex, France}
\email{zhang@math.univ-lille1.fr}
\date{Feb. 2, 2012}
\begin{document}

\begin{abstract}
The aim of this paper is to treat the constant coefficients functio\-nal-differential equation $y'(x)=ay(qx)+by(x)$ with the help of the analytic theory of linear $q$-difference equations. When $ab\not=0$, the associated Cauchy problem with $y(0)=1$ admits a unique power series solution, which is the  Hadamard product  of a usual-hypergeometric series by a basic-hypergeometric series. By means of $\theta$-modular relation, it is proved that this entire function can be expressed as linear combination of all the elements of a system of canonical fundamental solutions at infinity. A family of power series related to values of Gamma function at vertical lines is then introduced, and what really surprises us is that these explicit nonlacunary power series possess a natural boundary. When $a\not=0$ and $b=0$, the asymptotic behavior of solutions will be formulated in terms of the Lambert $W$-function.

\bigskip

{\parindent 0mm {\bf Mathematics Subject Classification 2010:} 34K06, 34M40, 33E30\\
{\bf Keywords:} pantograph equation, $q$-difference equation,   connection problem, Lambert $W$ function,  Jacobi $\theta$-function.}
\end{abstract}

\maketitle


\numberwithin{equation}{section}

\section*{Introduction}\label{section:introduction}

Let $a$ and $b$ denote two non-zero real or complex numbers. The following func\-tional-differen\-tial equation
\begin{equation}\label{equation:ab}
y'(x)=ay(qx)+by(x)
\end{equation}
has been firstly studied in details in \cite{KM,FMOT} on the seventies of the last century. Since these works, many authors have worked on several extensions to systems or even to some nonlinear or variable-coefficients equations; while not claiming to be exhaustive or complete, we are content for instance to mention \cite{BMW, Ce, Is, Li} and the references therein.  Following \cite{FMOT}, the functional equation \eqref{equation:ab} with real coefficients $a$ and $b$ arises as a mathematical model of an industrial problem involving wave motion in the overhead supply line to an electrified railway system, so Eq. \eqref{equation:ab} is often  called pantograph equation.

As is often the case for the theory of functional $q$-difference equations \cite{DRSZ,Ra0,Zh1,Zh2}, the situation is usually very different and may remain opposed each to other when $0<q<1$ is replaced by $q>1$. This important fact has been developed in \cite{KM}, by stating that the equation \eqref{equation:ab} with the boundary condition $y(0)=1$ is well-posed only if $0<q<1$. In the following, we shall treat only the case $0<q<1$;  see \S  \ref{subsection:KM} for some of remarkable results due to Kato and McLeod \cite{KM} for this case.

Contrary to \cite{KM}, the present paper is devoted to an analysis of the functional-differential equation \eqref{equation:ab} in the complex plane and the corresponding boundary problem will be transformed into one Cauchy problem. Accordingly, the real asymptotic behavior will be replaced with the asymptotic expansion over open sectors of the complex plane.

The analytic theory of linear functional $q$-difference equations is intimately linked with the theory of the elliptic functions; see \cite{Bi, DRSZ, RSZ2}. It may  probably be  natural to make use of elliptic functions to study the functional-differential equation \eqref{equation:ab}, and that is exactly what we shall do in the present paper. More precisely, all analytic solutions of \eqref{equation:ab} will be represented, as often as possible, with the help of Jacobi's theta function, and it will be shown that almost all results given in \cite{KM} under the hypothesis $0<q<1$ can be obtained entirely from our analysis. It is also worth noticing that a family of non-lacunary power series having a natural boundary is deduced from the present work; see \cite{Zh4} and also \S \ref{subsection:naturalboudary} and \S \ref{subsection:KM} in the below.

On the other hand,  as Eq. \eqref{equation:ab} is a functional equation involving both the differential and $q$-difference operators,  it will be shown that its power series solutions are made up of hypergeometric type terms and their $q$-analogs, such as $(\alpha)_n$ and $(\alpha;q)_n$. Therefore, our present work may be seen as a try towards a theory of special functions including both usual- and basic-hypergeometric series. See \eqref{equation:Introalphan} in the below for the notations $(\alpha)_n$ and $(\alpha;q)_n$ and see our paper \cite{Zh1} for a general description of power series satisfying an analytic differential-$q$-difference equation.

The present paper is entirely devoted to the only case of $0<q<1$. We would treat the case of $q>1$ in a future paper. The most important change in this case is that the functional-differential equation \eqref{equation:ab} does not, in general, possess any analytic solution at the neighborhood of the origin nor at the infinity. Indeed, all power series solutions are divergent everywhere and their coefficients have a growth such as $q^{n^2/2}$ or $n!$ as the index $n$ tends to infinity. In order to obtain analytic solutions in sectors of the complex plane, one could apply some very different summation processus, that are Borel-Laplace summation and some of its $q$-analogs; see  \cite{MZ,PRS,Ra,Zh2,Zh3,Zh5}.

 \subsection*{Organization of the paper}\label{subsection:organization}   Until \S \ref{section:b0}, we will always suppose that $b\not=0$ in the functional-differential equation \eqref{equation:ab}; by considering $y(-bx)$ instead of $y(x)$ in  \eqref{equation:ab},  one can suppose that $b=-1$, and this is what we shall do. Therefore, we are led to consider the following equation:
\begin{equation}\label{equation:alpha}
y'(x)=\alpha y(qx)-y(x),\quad
\alpha=q^\mu\in\CC^*,
\end{equation}
where $\mu\in\CC$.

We will start by establishing the fact that every ${\mathcal C}^1$-solution of \eqref{equation:alpha} given on an interval $[0,r)$, $r\in\CC^*$, can be continued into an entire function in the whole complex plan; see Proposition \ref{proposition:Cinfty} in \S \ref{subsection:solution}. Therefore, we limit ourself to the study of the only analytic solution of \eqref{equation:alpha} satisfying $y(0)=1$, and this leads us to the  power series $F(\mu;q,x)$ defined as being a combination of usual and basic-hypergeometric types series; see \eqref{equation:F}. Concerning the point at infinity, one can find a system of solutions of the form $x^{-\mu_k} G_k(\frac1x)$, where $k\in\ZZ$, $\mu_k=\mu-\frac{2\pi k i}{\ln q}$, and where $G_k(z)$ denotes some analytic function at $z=0$ given also as a mix of two types series; see \eqref{equation:G}. 

Understanding the analytic structure of all solutions of \eqref{equation:alpha} constitutes one main objectif of the present paper. For doing that, we shall express $F(\alpha;q,x)$ in terms of all members of the infinite system $\displaystyle \{x^{-\mu_k}G_k(\frac1x)\}_{k\in\ZZ}$, as given in Theorem \ref{theorem:FG}. This allows us to obtain  at infinity an asymptotic expansion of $F$ possessing as coefficients a family of $q$-periodic functions. Surprisingly, these functions can be represented by explicit nonlacunary power series and have a natural boundary. See Theorem \ref{theorem:FPsi} and Remark \ref{remark:why} in the below.

The principal steps we shall follow for proving Theorems \ref{theorem:FG} and \ref{theorem:FPsi}   are inspired by the following observations.
\begin{enumerate}
 \item Written as Dirichlet series, the power series $F(\mu;q,x)$ can be viewed as a Laplace integral in the sense of the theory of $q$-integrals of Jackson; see Proposition \ref{proposition:Fexp} in \S \ref{subsection:Dirichlet}, and Proposition \ref{proposition:Fqintegral} in \S \ref{subsection:qLaplace}. 
 \item\label{LL} Applying Laplace transform to functional equation \eqref{equation:alpha} yields a first order homogeneous $q$-difference equation which admits a Fuchsian singularity at the origin and an irregular singularity at infinity; see \S \ref{subsection:Laplace}.
 \item By considering the classical $\theta$-modular relation as a connection formula for the functional equation $xy(qx)=y(x)$, that relates the analytic solution $\theta(x)$ on $\CC^*$ and the ramified solution $e^{-\log^2x/(2\ln q)}$, the $q$-difference equation obtained in \eqref{LL} admits two types of solutions and they are related via $q$-periodic functions. See Theorem \ref{theorem:Fouriercharacter}  in \S \ref{subsection:characterfunction}.
\end{enumerate}

Accordingly, we are led to consider, in Section \ref{section:2Laplace}, two families of Laplace integrals, one of which represents the solution $F$ and the other, the functions $G_k$. Thanks to the well-known $\theta$-modular formula, we can finish the proof of Theorems \ref{theorem:FG} and \ref{theorem:FPsi} in Section \ref{section:proof}.

The case $b=0$ will be treated in the last two sections. By a simple change of the variable, one can suppose that $a=1$. Instead of the above function $F(\mu;q,x)$, one will make use of its limit form as $b\to 0$ in \eqref{equation:ab}; at the same time, the  system $\displaystyle \{x^{-\mu_k}G_k(\frac1x)\}$ of solutions at infinity does not have limit. We shall introduce a Laplace type integral that defines, in some sense, a canonical solution $h(x)$ at infinity; see Theorem \ref{theorem:b0fh} in \S \ref{subsection:b0connection}. This last solution will be associated to an asymptotic expansion involving the Lambert W-function; see Theorem \ref{theorem:ashW} in \S \ref{subsection:asymp h}.

 \subsection*{Notations}\label{subsection:notations} In what follows, we will denote by $\Log $ the complex logarithm function defined over its Riemann surface  $\tilde\CC^*$, and $x^\alpha=e^{\alpha\Log x}$ for all $\alpha\in\CC$ and $x\in\tilde\CC^*$. As usual, the set $\tilde\CC^*$ will be identified with the product set $]0,\infty[\times\RR$ via the relation $
x=\vert x\vert e^{i\arg x}
$.
Moreover, the following notations will be used.

\begin{itemize}
          \item For all $a$, $b\in\RR$ such that $a<b$,we denote by $S(a,b)$ the open sector of $\tilde\CC^*$ given by
        \begin{equation}\label{equation:IntroSab}
          S(a,b)=\{x\in\tilde \CC^*:a<\arg x<b\}\,.
                 \end{equation}
          By convention, one will make use of the following identification:
          $
          \CC^+= S(-\frac\pi2,\frac\pi2)
          $.
          \item We denote by $\theta(q,x)$ the following Jacobi theta function:
\begin{equation}\label{equation:Introtheta}
\theta(q,x)=\sum_{n\in\ZZ}q^{n(n-1)/2}x^n\,.
\end{equation}
We will write $\theta(x)$ instead of $\theta(q,x)$ if any confusion does not occur.
          \item We denote by $\kappa_q$ or simply $\kappa$ the positive number given by the following relation:
     \begin{equation}\label{equation:kappa}
\kappa=\kappa_q=-\frac{2\pi}{\ln q}>0\,.
\end{equation}
\item For any $\alpha\in\CC$, let $(\alpha)_n$ and $(\alpha;q)_n$ be the sequences given as follows: $
(\alpha;q)_0=(\alpha)_0=1
$, and for $n\ge 1$:
\begin{equation}\label{equation:Introalphan}
 (\alpha;q)_n=\prod_{j=0}^{n-1}(1-\alpha q^j),\quad
(\alpha)_n=\prod_{j=0}^{n-1}(\alpha+j).
\end{equation}
It is clear to see that $(\alpha;q)_n$ can be extended to $(\alpha;q)_\infty$ by taking $n\to\infty$.
        \end{itemize}


\section{Preliminary remarks and statement of results}\label{section:preliminary}

Consider the functional-differential equation \eqref{equation:alpha}, recalled as follows:
$$
y'(x)=\alpha y(qx)-y(x),\quad
\alpha\in\CC^*.\leqno\eqref{equation:alpha}
$$
Until Section \ref{section:b0}, we will fix a $\mu\in\CC$ such that $\alpha=q^\mu$.
Moreover, if $k\in\ZZ$, we will write
\begin{equation}\label{equation:nuk}
\mu_k=\mu+k\kappa i\,,
\end{equation}
where $\kappa$ is as given in \eqref{equation:kappa} in the above.

\subsection{$\cC^\infty$ or analytic solutions}\label{subsection:solution} Let $\Omega$ be a non-empty open  set of $\CC$ or $\RR$ such that $q\Omega\subset \Omega$. For any $x_0\in\Omega$, it follows that $q^nx_0\in\Omega$ for all positive integer $n$, so that $0\in\bar\Omega$. Moreover, if $y$ denotes any given $\cC^1$-solution of \eqref{equation:alpha} on $\Omega$, one may notice that $y$ belongs necessarily to the set $\cC^\infty(\Omega,\CC)$, by taking into account the following relations deduced from \eqref{equation:alpha} by iteration:
\begin{equation}\label{equation:alphan}
y^{(n+1)}(x)=\alpha\,q^n\,y^{(n)}(qx)-y^{(n)}(x),\quad
\forall n\in\NN.
\end{equation}

\begin{lemma}\label{lemma:prelimary}
Let $\Omega$ be a connected open set of $\CC$ such that $q\Omega\subset \Omega\not=\emptyset$, and let $y\in\cC^\infty(\Omega;\CC)$. If $y$ is a solution of \eqref{equation:alpha} such that $\sup_{x\in\Omega,|x|<R}|y(x)|<\infty$ for some $R>0$, then $y$ can be analytically continued into an entire function.
\end{lemma}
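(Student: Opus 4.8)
The plan is to convert the single boundedness hypothesis on $y$ into a uniform bound on \emph{all} of its derivatives on a fixed $q$-stable neighbourhood of the origin, and then to read off from these bounds that the Taylor series of $y$ has infinite radius of convergence. Put $\Omega_R=\{x\in\Omega:|x|<R\}$. Since $0\in\bar\Omega$, this set is non-empty; it is open, and because $0<q<1$ one has $q\Omega_R\subset\Omega_R$ (if $x\in\Omega$ then $qx\in\Omega$, while $|qx|=q|x|<R$). Set $M_n=\sup_{x\in\Omega_R}|y^{(n)}(x)|$; the hypothesis is precisely $M_0<\infty$, and the $q$-stability of $\Omega_R$ is what will let the recursion close up.

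The core of the argument is an induction on $n$ based on the iterated relation \eqref{equation:alphan}. Assume $M_n<\infty$. For $x\in\Omega_R$ we also have $qx\in\Omega_R$, so \eqref{equation:alphan} gives $|y^{(n+1)}(x)|\le|\alpha|\,q^n\,|y^{(n)}(qx)|+|y^{(n)}(x)|\le(1+|\alpha|q^n)M_n$, whence $M_{n+1}\le(1+|\alpha|q^n)M_n<\infty$. No circularity arises, since each $M_{n+1}$ is controlled by the already finite $M_n$. Telescoping yields $M_n\le M_0\prod_{j=0}^{n-1}(1+|\alpha|q^j)$, and here is the decisive point: because $0<q<1$ the series $\sum_{j\ge0}q^j$ converges, so the infinite product $\prod_{j\ge0}(1+|\alpha|q^j)$ converges to a finite constant $C$. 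Hence $|y^{(n)}(x)|\le C M_0$ for every $n\in\NN$ and every $x\in\Omega_R$: a bound that is completely uniform in $n$.

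With this uniform estimate available, fix $x_0\in\Omega_R$ and a disk $D=D(x_0,r)\subset\Omega_R$. The series $\sum_{n\ge0}\frac{y^{(n)}(x_0)}{n!}(x-x_0)^n$ has $n$-th term bounded by $C M_0\,|x-x_0|^n/n!$, so it converges on all of $\CC$ and defines an entire function $\tilde y$. Since $y$ is holomorphic (being a $\cC^1$ solution of \eqref{equation:alpha}, hence complex-differentiable on the open set $\Omega$), it coincides with its Taylor expansion on $D$; equivalently, the remainder $\frac1{N!}\int_{x_0}^x(x-t)^Ny^{(N+1)}(t)\,dt$ is at most $C M_0\,|x-x_0|^{N+1}/(N+1)!\to0$ on $D$. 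Thus $y=\tilde y$ on $D$. As $\Omega$ is connected and $y,\tilde y$ are both holomorphic there and agree on the open set $D$, the identity theorem forces $y=\tilde y$ throughout $\Omega$, so $\tilde y$ is the sought entire continuation of $y$.

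The step I expect to be the genuine obstacle is the uniform derivative bound of the second paragraph: one must transport control from $y$ alone to all of its derivatives on the whole set $\Omega_R$ without the constants degenerating. This works only because $0<q<1$, which makes the factor $q^n$ in \eqref{equation:alphan} decay and renders $\prod_{j\ge0}(1+|\alpha|q^j)$ convergent and the resulting bound independent of $n$. For $q>1$ the same recursion would instead manufacture factors growing like $q^{n(n-1)/2}$, the product would diverge, and one would be left only with a $q^{n^2/2}$-type growth of the Taylor coefficients — exactly the divergent-series behaviour the introduction ascribes to the case $q>1$.
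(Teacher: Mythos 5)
Your proof is correct and follows essentially the same strategy as the paper's: exploit the $q$-stability of $\{x\in\Omega:|x|<R\}$ together with $0<q<1$ to bound all derivatives uniformly by $M_0\prod_{k\ge 0}(1+|\alpha|q^k)=M_0\,(-|\alpha|;q)_\infty$, and conclude that the Taylor series at a point of that set converges on all of $\CC$. The only difference is that you obtain this bound by a direct induction on the recursion \eqref{equation:alphan}, whereas the paper first derives the closed-form expression \eqref{equation:ynqbin} for $y^{(n)}$ in terms of the $y(q^kx)$ with $q$-binomial coefficients and then invokes a $q$-binomial identity; your route is slightly more elementary and lands on the identical estimate, and your final identity-theorem step makes explicit what the paper leaves implicit.
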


\begin{proof}
Firstly, one can notice that the following relation holds for any positive integer $n$ :
\begin{equation}\label{equation:ynqbin}
y^{(n)}(x)=\sum_{k=0}^n(-1)^{n-k}\alpha^{k}\,q^{k(k-1)/2}\,\left[\begin{array}{c}
                                                                        n\cr
                                                                        k
                                                                       \end{array}
\right]_q\,y(q^{k}x)\,,
\end{equation}
where
$$
\left[\begin{array}{c}
                                                                        n\cr
                                                                        k
                                                                       \end{array}
\right]_q=\frac{(q;q)_n}{(q;q)_k\,(q;q)_{n-k}}\,.
$$
This can be easily checked by making use of \eqref{equation:alphan}, and we left the details to the interested reader.

Let $x_0\in\Omega$ be such that $|x_0|<R$. By hypothesis, it follows that $|y(q^nx_0)|\le K<\infty$ for all integer $n\ge 0$, so that,  from \eqref{equation:ynqbin} and \cite[p. 484, (10.0.9)]{AAR}, one obtains easily that
$$\vert y^{(n)}(x_0)\vert \le K\prod_{k=0}^n(1+\vert\alpha\vert q^k)\le K(-\vert\alpha\vert;q)_\infty\,.
$$
Accordingly, we find that $y$ has a Taylor series expansion whose radius of convergence equals to infinity; in other words, $y$ can be analytically continued over the whole complex-plane $\CC$.
\end{proof}

By  almost the same way as what done for Lemma \ref{lemma:prelimary}, one can find the following result.

\begin{proposition}\label{proposition:Cinfty}
Let $d\in\RR$  $\bmod\,2\pi\ZZ$ and let $R>0$. Every given $\cC^\infty$ solution of \eqref{equation:alpha} on $[0,Re^{id})$ can be analytically continued to be an entire function.

Accordingly, Eq. \eqref{equation:alpha}  has no nontrivial $\cC^\infty$-solution on $[0,Re^{id})$ such that $y(0)=0$.
\end{proposition}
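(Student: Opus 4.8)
The plan is to reproduce the derivative estimate of Lemma \ref{lemma:prelimary}, the only structural difference being that the domain is now the one-dimensional ray $[0,Re^{id})$ instead of an open set, so that the boundedness near the origin need no longer be postulated. First I would parametrize the ray by $x=te^{id}$, $t\in[0,R)$, and set $Y(t)=y(te^{id})$. Since $0<q<1$ keeps $qt$ inside $[0,R)$, Eq. \eqref{equation:alpha} restricted to the ray reads $Y'(t)=e^{id}\bigl(\alpha\,Y(qt)-Y(t)\bigr)$, a genuine functional-differential equation in the real variable $t$. Differentiating repeatedly exactly as in \eqref{equation:alphan} and iterating as in \eqref{equation:ynqbin} would then give
\[
Y^{(n)}(t)=e^{ind}\sum_{k=0}^n(-1)^{n-k}\alpha^{k}\,q^{k(k-1)/2}\,\left[\begin{array}{c} n\\ k\end{array}\right]_q Y(q^k t)\,.
\]

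Next I would fix any $t_0<R$ and put $K=\sup_{0\le s\le t_0}|Y(s)|$, which is finite because $Y$ is continuous on the compact segment $[0,t_0]$; this is where the boundedness hypothesis of Lemma \ref{lemma:prelimary} becomes automatic, the solution being smooth up to the endpoint $0$. Since $q^k t\le t_0$ for $t\in[0,t_0]$, each $|Y(q^kt)|\le K$, and the same $q$-binomial identity used in Lemma \ref{lemma:prelimary} yields the uniform estimate
\[
|Y^{(n)}(t)|\le K\prod_{j=0}^{n-1}\bigl(1+|\alpha|q^j\bigr)\le K\,(-|\alpha|;q)_\infty
\]
for all $t\in[0,t_0]$ and all $n$. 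The decisive feature, exactly as in the Lemma, is that this bound is \emph{independent of} $n$.

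The main point, and the only place where one must go beyond a transcription of Lemma \ref{lemma:prelimary}, is to upgrade ``$\cC^\infty$ with bounded derivatives'' to honest analyticity, since a smooth function need not equal its Taylor series. The $n$-independent bound settles this at once: Taylor's formula (integral form of the remainder) gives, for $t\in[0,t_0]$,
\[
\Bigl|Y(t)-\sum_{n=0}^{N-1}\frac{Y^{(n)}(0)}{n!}\,t^n\Bigr|\le \frac{t^N}{N!}\sup_{0\le s\le t}|Y^{(N)}(s)|\le K\,(-|\alpha|;q)_\infty\,\frac{t_0^N}{N!}\,,
\]
which tends to $0$ as $N\to\infty$. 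Hence $Y(t)=\sum_n \frac{Y^{(n)}(0)}{n!}t^n$ on $[0,t_0]$, and so on all of $[0,R)$ since $t_0<R$ is arbitrary. The same uniform bound shows that the power series $\tilde y(x)=\sum_n \frac{e^{-ind}Y^{(n)}(0)}{n!}\,x^n$ converges for every $x\in\CC$; it is therefore entire and satisfies $\tilde y(te^{id})=Y(t)=y(te^{id})$, providing the desired entire continuation of $y$.

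Finally, for the second assertion I would extract the coefficient recurrence of this entire solution. By analytic continuation $\tilde y$ still satisfies \eqref{equation:alpha} (both sides are entire and agree along the ray, whose closure accumulates at $0$); writing $\tilde y(x)=\sum_{n\ge0}c_nx^n$ and inserting into \eqref{equation:alpha} yields $(n+1)c_{n+1}=(\alpha q^n-1)c_n$ for all $n\ge0$. If $y(0)=c_0=0$, this recurrence forces $c_n=0$ for every $n$, so $\tilde y\equiv0$; thus \eqref{equation:alpha} admits no nontrivial $\cC^\infty$-solution on $[0,Re^{id})$ vanishing at the origin. I expect the only genuinely delicate step to be the smooth-to-analytic passage above, everything else being a direct adaptation of Lemma \ref{lemma:prelimary}.
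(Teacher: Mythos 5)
Your proof is correct and follows essentially the same route as the paper: both rest on the iterated identity \eqref{equation:ynqbin} together with the boundedness of $y(q^kx_0)$ (coming from continuity at the origin) to obtain an $n$-independent bound on all derivatives, and then conclude that the Taylor series at $0$ converges on all of $\CC$ and represents the solution. You merely spell out two steps the paper leaves implicit — the Taylor-remainder argument upgrading smoothness to analyticity, and the coefficient recurrence $(n+1)c_{n+1}=(\alpha q^n-1)c_n$ forcing the trivial solution when $y(0)=0$ — both of which are handled correctly.
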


\begin{proof}
For any $x_0\in(0,Re^{id})$, by considering the relation \eqref{equation:ynqbin} with $x=x_0$ and by noticing $y(q^nx_0)\to y(0)$ for $n\to\infty$, one finds that $y$ admits a Taylor expansion that converges on the whole plane.
\end{proof}
Consequently, we shall only consider the analytic solutions of \eqref{equation:alpha} on $\CC$.

\subsection{Connection formula between power series-type solutions}\label{subsection:FG} Let $y$ be an analytic function solution to \eqref{equation:alpha} in a neighborhood of $x=0$ in $\CC$. If $a_n=y^{(n)}(0)$ for all integer $n\ge 0$, then putting $x=0$ into Eq. \eqref{equation:alphan} gives raise to the following relation:
$$
a_{n+1}=-(1-\alpha q^n)\,a_n\,.
$$
Therefore, we are led to  the following power series ($q^\mu=\alpha$):
\begin{equation}\label{equation:F}
 F(\mu;q,x)=\sum_{n\ge 0}\frac{(q^\mu;q)_n}{n!}(-x)^n\,.
\end{equation}
Since $(q^\mu;q)_n$ admits a finite limit as $n\to\infty$, one finds that $F(\mu;q,x)$ defines an entire function.

\begin{proposition}\label{proposition:FCinfinity}
Let $\alpha=q^\mu$ as before,  $d\in\RR$ $\bmod$ $2\pi\ZZ$, and $R>0$. Then the entire function $F(\mu;q,x)$   represents the unique  $\cC^\infty$ solution of \eqref{equation:alpha} on $[0,Re^{id})$ such that $y(0)=1$.
\end{proposition}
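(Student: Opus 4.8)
The plan is to reduce the whole statement to the first-order recurrence satisfied by the Taylor coefficients at the origin, which is exactly the recurrence already used to motivate the definition \eqref{equation:F}. First I would dispose of existence, then of uniqueness, both through this single recurrence; there is essentially no analytic difficulty left once Proposition \ref{proposition:Cinfty} is available.

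For existence I would argue as follows. The series \eqref{equation:F} is already known to define an entire function, so it suffices to verify that $F$ solves \eqref{equation:alpha}. Writing $F(\mu;q,x)=\sum_{n\ge 0}\frac{a_n}{n!}\,x^n$ with $a_n=(-1)^n(q^\mu;q)_n$, a termwise comparison of the entire functions $F'(x)$ and $\alpha F(qx)-F(x)$ shows that the coefficient of $x^n/n!$ on the two sides are $a_{n+1}$ and $(\alpha q^n-1)\,a_n$ respectively; hence $F$ is a solution precisely when $a_{n+1}=(\alpha q^n-1)\,a_n$ for every $n\ge 0$. This identity holds by construction, since $(q^\mu;q)_{n+1}=(1-\alpha q^n)(q^\mu;q)_n$ gives $a_{n+1}=(\alpha q^n-1)a_n$. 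Because $F$ is entire this holds throughout $\CC$, in particular on the ray $[0,Re^{id})$, and $F(\mu;q,0)=a_0=1$, so $F$ is a $\cC^\infty$ solution with the prescribed value at the origin.

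For uniqueness, let $y$ be any $\cC^\infty$ solution of \eqref{equation:alpha} on $[0,Re^{id})$ with $y(0)=1$. By Proposition \ref{proposition:Cinfty}, $y$ extends to an entire function, so it is determined by the derivatives $a_n:=y^{(n)}(0)$ read off along the ray. Setting $x=0$ in the iterated relation \eqref{equation:alphan} yields $a_{n+1}=(\alpha q^n-1)\,a_n$ for all $n$, while $a_0=y(0)=1$; this recurrence together with the initial datum determines every $a_n$ uniquely, giving back $a_n=(-1)^n(q^\mu;q)_n$. Thus the entire extension of $y$ coincides with $F(\mu;q,x)$, whence $y=F$ on $[0,Re^{id})$. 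The argument contains no genuinely hard step: the only points demanding a little care are the passage, supplied by Proposition \ref{proposition:Cinfty}, from a $\cC^\infty$ solution on the ray to a globally analytic function whose coefficients at $0$ may legitimately be computed from the one-sided data, and the elementary verification that $\prod_{j=0}^{n-1}(\alpha q^j-1)$ equals $(-1)^n(q^\mu;q)_n$.
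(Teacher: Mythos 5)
Your proposal is correct and follows essentially the same route as the paper: the paper's proof is the one-line remark that the claim ``follows immediately from Proposition \ref{proposition:Cinfty}'', relying on the recurrence $a_{n+1}=-(1-\alpha q^n)a_n$ already derived in \S\ref{subsection:FG} to pin down the Taylor coefficients and hence identify the entire extension with $F(\mu;q,x)$. You have simply written out the details (termwise verification for existence, the same recurrence plus Proposition \ref{proposition:Cinfty} for uniqueness), and all of them check out.
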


\begin{proof}
It follows immediately from Proposition \ref{proposition:Cinfty}.
\end{proof}

In order to study the asymptotic behavior of $F(\mu;q,x)$ at infinity, we introduce the following power series:
\begin{equation}\label{equation:G}
G(a;q,x)=\sum_{n\ge 0}\frac{(a)_nq^{n(n+1)/2}}{(q;q)_n}(-x)^n\,,
\end{equation}
which obviously defines an entire function.

One main result that we shall establish in the paper is the following

\begin{theorem}\label{theorem:FG}Let  $\mu\in\CC$, $\alpha=q^\mu\in\CC^*$, and let $\mu_k$ as in \eqref{equation:nuk}. Let $F$ and $G$ be as in \eqref{equation:F} and \eqref{equation:G}, respectively.
Then the following properties hold.
\begin{enumerate}\item The functions $F(\mu;q,x)$ and $\displaystyle x^{-\mu_k}G(\mu_k;q,\frac{1}{x})$, $k\in\ZZ$, all satisfy the func\-tional-differential equation \eqref{equation:alpha}
\item\label{mumu} Moreover, if $\mu\in\CC\setminus(\ZZ_{\le 0}\oplus \kappa\ZZ i)$, then
 for all $x\in\CC^+$, it follows that
\begin{equation}\label{equation:FG}
F(\mu;q,x)=\frac{\kappa\,(q^\mu;q)_\infty}{2\pi\, (q;q)_\infty}\,\sum_{k\in\ZZ}\Gamma(\mu_k)x^{-\mu_k}G(\mu_k;q,\frac{1}{x})\,,
\end{equation}
where $\Gamma$ denotes the usual Euler Gamma function.
\end{enumerate}
\end{theorem}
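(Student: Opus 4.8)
The first assertion is a direct substitution. For $F$ it is Proposition~\ref{proposition:FCinfinity}. For $w_k(x):=x^{-\mu_k}G(\mu_k;q,\tfrac1x)$ I would write $w_k(x)=\sum_{n\ge0}c_n\,x^{-\mu_k-n}$ with $c_n=(-1)^n(\mu_k)_n\,q^{n(n+1)/2}/(q;q)_n$ and plug into \eqref{equation:alpha}. The point is that $\kappa\ln q=-2\pi$, so $q^{\mu_k}=q^\mu=\alpha$ and hence $\alpha q^{-\mu_k-n}=q^{-n}$; comparing the coefficient of $x^{-\mu_k-n}$ on both sides turns \eqref{equation:alpha} into the recurrence $c_n(q^{-n}-1)=-(\mu_k+n-1)\,c_{n-1}$, which is exactly the one obeyed by the $c_n$. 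This is routine, and I would leave the bookkeeping aside.

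For the connection formula \eqref{equation:FG} I would evaluate the right-hand side directly and show it collapses onto $F$. Inserting the series \eqref{equation:G} and interchanging the two summations, the right-hand side becomes
\[
\frac{\kappa\,(q^\mu;q)_\infty}{2\pi\,(q;q)_\infty}\sum_{n\ge0}\frac{(-1)^n q^{n(n+1)/2}}{(q;q)_n}\,x^{-n}\,T_n,
\qquad
T_n:=\sum_{k\in\ZZ}\Gamma(\mu+n+k\kappa i)\,x^{-\mu_k},
\]
so everything hinges on the bilateral sum $T_n$. Its summand is the function $\tau\mapsto\Gamma(\mu+n+i\tau)\,e^{-i\tau\Log x}$ sampled along the arithmetic progression $\tau=k\kappa$, and my plan is to resum it by Poisson summation. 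The $m$-th Fourier coefficient equals $\int_{-\infty}^{\infty}\Gamma(\mu+n+i\tau)\,e^{-i\tau\Log(xq^{-m})}\,d\tau$, which by Mellin inversion of $e^{-z}$ is $2\pi\,(xq^{-m})^{\mu+n}e^{-xq^{-m}}$; this is the step where $x\in\CC^+$ is indispensable, as it guarantees $\Re(xq^{-m})>0$ for every $m\in\ZZ$. One thus obtains $T_n=\frac{2\pi}{\kappa}\,x^{n}\sum_{m\in\ZZ}q^{-m(\mu+n)}e^{-xq^{-m}}$.

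Substituting this back, the constant $\kappa/2\pi$ cancels, and after interchanging the $n$- and $m$-sums the inner sum is $\sum_{n\ge0}q^{n(n-1)/2}(-q^{1-m})^n/(q;q)_n=(q^{1-m};q)_\infty$ by Euler's identity. Since $(q^{1-m};q)_\infty=0$ for every $m\ge1$, only $m=-\ell\le0$ survive, and there $(q^{1+\ell};q)_\infty=(q;q)_\infty/(q;q)_\ell$. The right-hand side reduces to the $\theta$-exponential (Jackson $q$-Laplace) form
\[
(q^\mu;q)_\infty\sum_{\ell\ge0}\frac{q^{\ell\mu}}{(q;q)_\ell}\,e^{-xq^\ell},
\]
which is precisely the Dirichlet representation of $F$ announced as Proposition~\ref{proposition:Fexp}. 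Expanding each exponential and resumming over $\ell$ by the $q$-binomial theorem $\sum_{\ell\ge0}q^{\ell(\mu+n)}/(q;q)_\ell=1/(q^{\mu+n};q)_\infty$ produces $\sum_{n\ge0}(q^\mu;q)_n(-x)^n/n!=F(\mu;q,x)$, as desired.

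The main obstacle is not the algebra but the analytic bookkeeping that legitimises the two interchanges of summation and, above all, the Poisson step. The series $T_n$ converges only because Stirling's formula gives the decay $|\Gamma(\mu+n+k\kappa i)|\le C\,e^{-\pi\kappa|k|/2}$ while $|x^{-\mu_k}|$ grows like $e^{k\kappa\arg x}$; the two balance exactly when $|\arg x|<\pi/2$, so the very hypothesis $x\in\CC^+$ that validates the Mellin inversion also secures convergence of the connection series. I would make Poisson summation rigorous by noting that, again by Stirling, $\tau\mapsto\Gamma(\mu+n+i\tau)e^{-i\tau\Log x}$ is Schwartz-like on the line, with the explicit Fourier transform above. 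Finally, the Mellin inversion $\frac{1}{2\pi i}\int_{(c)}\Gamma(s)z^{-s}\,ds=e^{-z}$ needs $c=\Re(\mu)+n>0$; to avoid tracking truncated contributions for the finitely many $n$ with $\Re(\mu)+n\le0$, I would first establish \eqref{equation:FG} for $\Re(\mu)>0$ and then extend it to all $\mu\in\CC\setminus(\ZZ_{\le0}\oplus\kappa\ZZ i)$ by analytic continuation, both sides being holomorphic in $\mu$ on that set. This Poisson--Mellin mechanism is exactly the analytic form of the $\theta$-modular relation highlighted in the introduction.
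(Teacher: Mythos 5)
Your proof is correct, but it takes a genuinely different route from the paper's. For assertion (1) you do exactly what the paper does (Propositions \ref{proposition:FCinfinity} and \ref{proposition:seriesG}). For the connection formula \eqref{equation:FG}, the paper never touches the bilateral sum directly: it represents $F$ by the contour integral $I(\alpha;q,x)$ of \S\ref{subsection:I} (identified with $F$ via uniqueness of the Cauchy problem), introduces the ray integrals $I_\nu$, expands the character function $\theta(-q^{\mu}x)/\theta(-x)$ into a Fourier--Laurent series by combining the $\theta$-modular formula with Ramanujan's ${}_1\psi_1$ summation (Theorem \ref{theorem:Fouriercharacter}), integrates termwise to produce the $\Gamma$ factors (Theorem \ref{theorem:InuPhi}), and finally extracts $F$ from the monodromy relation $I_0(x)-I_0(xe^{-2\pi i})=2\pi i\,I(x)$. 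You instead attack the right-hand side head on: interchange the $n$- and $k$-sums, resum the bilateral Gamma series $T_n$ by Poisson summation with the Fourier transform supplied by Mellin inversion of $e^{-z}$, and collapse the result onto the Dirichlet series of Proposition \ref{proposition:Fexp} via Euler's identity \eqref{equation:seriesg}, extending from $\Re(\mu)>0$ by analytic continuation. Your convergence analysis is the right one (the balance of the Stirling decay $e^{-\pi\kappa|k|/2}$ against $e^{k\kappa\arg x}$ is precisely what singles out $\CC^+$, and the absolute convergence needed for the two Fubini steps does hold), and the Poisson--Mellin step is, as you note, the analytic avatar of the modular relation \eqref{equation:thetamodular}. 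What your argument buys is economy: it bypasses Sections \ref{section:Laplace}--\ref{section:2Laplace} entirely and needs only Proposition \ref{proposition:Fexp}. What it does not deliver is the intermediate machinery ($I_\nu$, the character-function expansion, the functions $\Phi$) that the paper reuses to state Theorem \ref{theorem:FPsi}, to discuss the natural boundary of $\Psi$, and to place \eqref{equation:FG} inside the connection-problem framework for $q$-difference equations; with your route Theorem \ref{theorem:FPsi} would still follow, but only as an after-the-fact regrouping of \eqref{equation:FG} rather than as a statement about the Laplace transform of the equation.
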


If $\mu=-m-\epsilon$ and $m\in\NN$, then it follows that
$$
\lim_{\epsilon\to 0}(q^\mu;q)_\infty\,\Gamma(\mu)=(-1)^{m+1}\,\ln q\,(q^{-m};q)_m\,(q;q)_\infty/m!\,,
$$
so that we can observe the following
\begin{remark}\label{remark:FG}\rm
When $\mu\in\ZZ_{\le 0}\oplus \kappa\ZZ i$, the relation \eqref{equation:FG} is reduced to the following one:
$$
F(-m;q,x)=\frac{(q;q)_m}{m!}\,q^{-m(m+1)/2}\,x^m\,G(-m;q,\frac1x)\qquad( m\in\NN),
$$
that can be directly verified as the functions $F$ and $G$ become polynimial.
\end{remark}

\subsection{Natural boundary for the analytic continuation in terms of modular variable}\label{subsection:naturalboudary}
As in \cite{Zh4}, let  $\Psi(u,v,z)$ be the Laurent series of $x$ associated with $(u,v)\in\CC\times\RR$ while $u\notin \ZZ_{\le 0}\oplus\frac{2vi}{\pi}\ZZ$ :
\begin{equation}\label{equation:Psi}
\Psi(u,v,z)=\sum_{n\in\ZZ}\Gamma(u+\frac{2ivn}{\pi})\,z^n\,.
\end{equation}
By the Stirling's formula on $\Gamma$, it follows that $\Psi(u,v,z)$ is convergent over the annulus ${\mathcal C}_\nu$, where
$$
{\mathcal C}_\nu=\{z\in\CC: e^{-\vert v\vert}<\vert z\vert<e^{\vert v\vert}\}\,.
$$
By \cite[Th\'eor\`eme 1]{Zh4}, the function $z\mapsto\Psi(u,v,z)$ can not be analytically continued beyond the borders $\partial{\mathcal C}_\nu$.

Therefore, the relation \eqref{equation:FG} can be stated as follows.

\begin{theorem}\label{theorem:FPsi}
 Let $\mu$ be as in Theorem \ref{theorem:FG} \eqref{mumu}. Then, for all $x\in\CC^+$, it follows that
\begin{equation}\label{equation:FPsi}
F(\mu;q, x)=\frac{\kappa\,(q^\mu;q)_\infty}{2\pi\, (q;q)_\infty}\,\bigl(\frac1x\bigr)^\mu\,\sum_{n\ge 0}\frac{q^{n(n+1)/2}}{(q;q)_n}\,\Psi(\mu+n,\frac{\kappa\pi}{2},x^*)\,\bigl(-\frac{1}{x}\bigr)^n\,,
\end{equation}
where we denote by $x^*$ the modular variable defined as follows:
$$
x^*=x^{-\kappa i}=e^{2\pi i\frac{\log x}{\ln q}}\,.
$$
\end{theorem}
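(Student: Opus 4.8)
The plan is to derive \eqref{equation:FPsi} directly from the connection formula \eqref{equation:FG} of Theorem \ref{theorem:FG}, by inserting the power-series expansion \eqref{equation:G} of each $G(\mu_k;q,\frac1x)$ and reorganizing the resulting double series so that the summation over $k\in\ZZ$ is carried out first. First I would record the two elementary identities on which the whole computation rests. Since $\mu_k=\mu+k\kappa i$ by \eqref{equation:nuk}, one has $x^{-\mu_k}=x^{-\mu}(x^{-\kappa i})^k=(\frac1x)^\mu\,(x^*)^k$, which isolates the modular variable $x^*$ and pulls the $k$-independent factor $(\frac1x)^\mu$ out of the sum. Next, from the definition \eqref{equation:Introalphan} of the Pochhammer symbol one has $(\mu_k)_n=\Gamma(\mu_k+n)/\Gamma(\mu_k)$, so that the factor $\Gamma(\mu_k)$ appearing in \eqref{equation:FG} combines with $(\mu_k)_n$ to give simply $\Gamma(\mu_k)(\mu_k)_n=\Gamma(\mu_k+n)$.

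Combining these, \eqref{equation:FG} becomes
\begin{equation*}
F(\mu;q,x)=\frac{\kappa\,(q^\mu;q)_\infty}{2\pi\,(q;q)_\infty}\Bigl(\frac1x\Bigr)^\mu\sum_{k\in\ZZ}(x^*)^k\sum_{n\ge0}\frac{\Gamma(\mu_k+n)\,q^{n(n+1)/2}}{(q;q)_n}\Bigl(-\frac1x\Bigr)^n .
\end{equation*}
Exchanging the two summations and observing that, with $v=\frac{\kappa\pi}{2}$ one has $\frac{2iv}{\pi}=\kappa i$, the inner sum over $k$ is exactly $\sum_{k\in\ZZ}\Gamma((\mu+n)+k\kappa i)(x^*)^k=\Psi(\mu+n,\frac{\kappa\pi}{2},x^*)$ by \eqref{equation:Psi}, one arrives at the announced formula \eqref{equation:FPsi}. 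Thus the entire statement reduces to the legitimacy of this interchange of the order of summation.

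The main obstacle is therefore to justify the interchange, i.e.\ the absolute convergence of the double series for $x\in\CC^+$, which I would establish by a Tonelli-type estimate. A direct computation with $x=|x|e^{i\arg x}$ gives $|x^*|=e^{\kappa\arg x}$, so for $x\in\CC^+=S(-\frac\pi2,\frac\pi2)$ the point $x^*$ lies strictly inside the annulus $\cC_\nu=\{z:e^{-\kappa\pi/2}<|z|<e^{\kappa\pi/2}\}$ on which each $\Psi(\mu+n,\frac{\kappa\pi}{2},\cdot)$ converges. The summability in $k$ comes from Stirling's formula, which gives $|\Gamma(\mu+n+k\kappa i)|$ a decay of order $e^{-\kappa\pi|k|/2}$; paired with $|x^*|^{k}=e^{\kappa k\arg x}$ and the strict inequality $|\arg x|<\frac\pi2$, this yields genuine geometric decay $e^{-\kappa|k|(\pi/2-|\arg x|)}$ in $k$, with only a polynomial factor of order $|k|^{\mathrm{Re}\,\mu+n-1/2}$. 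The summability in $n$ then follows because the super-exponential decay of $q^{n(n+1)/2}$ (recall $0<q<1$) dominates the factorial-type growth in $n$ produced by summing that polynomial factor over $k$. Putting these two estimates together shows that the double series converges absolutely, which legitimizes the rearrangement and completes the proof.
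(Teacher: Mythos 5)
Your computation is correct as far as it goes: the identities $x^{-\mu_k}=(\frac1x)^\mu (x^*)^k$ and $\Gamma(\mu_k)\,(\mu_k)_n=\Gamma(\mu_k+n)$ do turn the right-hand side of \eqref{equation:FG} into the right-hand side of \eqref{equation:FPsi} once the sums over $k$ and $n$ are interchanged, and your justification of the interchange is essentially sound: for an estimate uniform in $n$ it is cleaner to write $\Gamma(\mu_k+n)=(\mu_k)_n\,\Gamma(\mu_k)$, bound $|(\mu_k)_n|\le(|\mu_k|+n)^n$ and apply Stirling only to $\Gamma(\mu_k)$, but the conclusion you draw --- geometric decay in $k$ with margin $\kappa(\frac\pi2-|\arg x|)$, factorial growth in $n$ crushed by $q^{n(n+1)/2}$ --- is exactly right. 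What you have proved, in other words, is the \emph{equivalence} of \eqref{equation:FG} and \eqref{equation:FPsi}, which is precisely the step the paper leaves to the reader with the sentence ``the relations \eqref{equation:FG} and \eqref{equation:FPsi} can be obtained directly one from other.''

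The gap is that this is not a proof of Theorem \ref{theorem:FPsi}: it is a reduction of Theorem \ref{theorem:FPsi} to Theorem \ref{theorem:FG}\,\eqref{mumu}, and the paper's only proof of the latter is the reverse reduction. In \S\ref{subsection:proofFG} the paper proves \eqref{equation:FPsi} \emph{first} --- by representing $F$ through the monodromy of the Laplace integral $I_0(q^\mu;q,x)$ (Proposition \ref{proposition:InuF}), expanding $I_\nu$ at infinity via the theta-modular Fourier series of the character function (Theorem \ref{theorem:Fouriercharacter} and Theorem \ref{theorem:InuPhi}), and converting $\Phi$ into $\Psi$ (Proposition \ref{proposition:PhiPsi}) --- and only then deduces \eqref{equation:FG}. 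Taking \eqref{equation:FG} as an input therefore makes your argument circular within the paper's logical architecture, and all of the analytic content (the Laplace representation of $F$, the $\theta$-modular formula, the computation of the connection constant $\kappa\,(q^\mu;q)_\infty/(2\pi\,(q;q)_\infty)$) is bypassed rather than established. Your rearrangement is still worth keeping as the missing ``one from the other'' bridge between the two statements, but a self-contained proof must supply an independent derivation of one of them.
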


 From the formulas \eqref{equation:FG} and \eqref{equation:FPsi}, one finds that $F(\mu;q,x)=O(x^{-\mu})$ as $x\to\infty$  in the right half plane $\CC^+$. On the other hand, it will be seen that $F(\mu;q,x)$ is exponentially large if $\Re(x)\to-\infty$; see Theorems \ref{theorem:Fexp} and \ref{theorem:FexpN} in Section \ref{section:powerseries}. 
 
 \begin{remark}\label{remark:why}\rm
The formulas \eqref{equation:FG} and \eqref{equation:FPsi} are only valid for $\Re (x)>0$, and this explains why {\bf each function $\Psi(n+\mu,\frac{\kappa\pi}2,x^*)$ has a natural boundary on the imaginary axis $\Re (x)=0$ or, equivalently, on the circles $\vert x^*\vert=e^{\pm\pi^2/\ln q}$}. This  is exactly the subject of \cite[Th\'eor\`eme 1]{Zh4}, which is proved by making use of lacunary Dirichlet series.
  
 \end{remark}

For the proof of Theorems \ref{theorem:FG} and \ref{theorem:FPsi}, see \S \ref{subsection:proofFG}.

 \section{Power series-type solutions and Dirichlet series representation}\label{section:powerseries}

The Dirichlet series expansion techniques are often used for the investigations of pantograph equations; see \cite{FMOT,Is0,Is,MBW}. In the following, the entire function $F(\mu;q,x)$ will be expanded as a Dirichlet series from a point view of $q$-series. This expansion will be used for the study of the asymptotic behaviour of $F(\mu;q,x)$ while $x\to\infty$ inside the left half plane $\Re(x)<0$.

\subsection{A Dirichlet series representation of $F(\mu;q,x)$ }\label{subsection:Dirichlet}
The following expression of $F(\mu;q,x)$ may be known to the researchers of pantograph-type equations, and unfortunately  the author has not found a precise reference about it. However,  the proof we shall give seems somewhat interesting, combining the Hadamard product with a Heine formula for $q$-series.

\begin{proposition}\label{proposition:Fexp}If $q^\mu=\alpha$ and $\Re(\mu)>0$, then the following relation holds for all $x\in\CC$:
\begin{equation}\label{equation:Fexp}
F(\mu;q,x)=(\alpha;q)_\infty\,\sum_{n\ge0}\frac{\alpha^n e^{-q^nx}}{(q;q)_n}\,.
\end{equation}
\end{proposition}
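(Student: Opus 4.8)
The plan is to show that the right-hand side of \eqref{equation:Fexp} defines an entire function of $x$ whose Taylor coefficients at the origin coincide, one by one, with those of the series \eqref{equation:F} defining $F(\mu;q,x)$. Since $0<q<1$, the hypothesis $\Re(\mu)>0$ forces $|\alpha|=|q^\mu|=q^{\Re(\mu)}<1$; combined with $|e^{-q^n x}|\le e^{q^n|x|}\le e^{|x|}$ and $\alpha^n/(q;q)_n\sim\alpha^n/(q;q)_\infty$, this shows that $\sum_{n\ge0}\alpha^n e^{-q^n x}/(q;q)_n$ converges normally on every disc $|x|\le R$ (its terms being dominated there by $|\alpha|^n e^{R}/(q;q)_n$, a summable sequence), hence represents an entire function.

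Next I would expand each exponential as $e^{-q^n x}=\sum_{m\ge0}(-q^n x)^m/m!$ and substitute, obtaining the double series $\sum_{n,m}\frac{\alpha^n}{(q;q)_n}\frac{(-q^n x)^m}{m!}$. The one genuine obstacle is justifying the interchange of the two summations; this is exactly where $\Re(\mu)>0$ enters. Taking absolute values gives $\sum_{n\ge0}\frac{|\alpha|^n}{(q;q)_n}e^{q^n|x|}\le e^{|x|}/(|\alpha|;q)_\infty<\infty$, so the double series converges absolutely and Fubini's theorem permits the reordering. After it, the coefficient of $(-x)^m/m!$ is $(\alpha;q)_\infty\sum_{n\ge0}\frac{\alpha^n q^{nm}}{(q;q)_n}=(\alpha;q)_\infty\sum_{n\ge0}\frac{(\alpha q^m)^n}{(q;q)_n}$, since $(-q^n x)^m=(-x)^m q^{nm}$. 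I would then invoke Euler's $q$-exponential identity $\sum_{n\ge0}z^n/(q;q)_n=1/(z;q)_\infty$, valid for $|z|<1$ and applicable because $|\alpha q^m|\le|\alpha|<1$, to evaluate the inner sum as $1/(\alpha q^m;q)_\infty$. The telescoping relation $(\alpha;q)_\infty=(\alpha;q)_m\,(\alpha q^m;q)_\infty$ then collapses the coefficient to $(\alpha;q)_m=(q^\mu;q)_m$, which is precisely the coefficient of $(-x)^m/m!$ in \eqref{equation:F}. Since the two entire functions have identical Taylor expansions, \eqref{equation:Fexp} follows for all $x\in\CC$.

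Conceptually, this computation is the ``Hadamard product'' the introduction alludes to: the factor $1/m!$ is the signature of $e^{-x}$, the Pochhammer factor $(q^\mu;q)_m$ is the basic-hypergeometric ingredient, and Euler's identity is the Heine-type $q$-series summation that glues them together. One could instead organize the same identity as a contour-integral Hadamard product of $e^{-x}$ against $\sum_{n\ge0}(q^\mu;q)_n t^n$, but the coefficient-matching route above sidesteps the contour bookkeeping and keeps the convergence hypotheses fully transparent, so I would carry out the argument in that form.
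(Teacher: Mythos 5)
Your proof is correct, and it reaches the identity by a route that differs in execution, though not really in substance, from the paper's. The paper writes $F(\mu;q,x)$ as the Hadamard product of $\sum_{n\ge0}(\alpha;q)_nx^n$ with $e^{-x}$ via a contour integral over $|t|=r<1$, invokes Heine's summation \cite[Theorem 10.9.1]{AAR} in the form $\sum_{n\ge 0}(\alpha;q)_nx^n=(\alpha;q)_\infty\sum_{n\ge 0}\frac{\alpha^n}{(q;q)_n\,(1-q^nx)}$ to turn the first factor into a partial-fraction series, and then applies Cauchy's formula to convert each kernel $(1-q^nx/t)^{-1}$ into $e^{-q^nx}$. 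You instead match Taylor coefficients directly: after expanding $e^{-q^nx}$ and reordering the double series (legitimately, by your absolute-convergence bound, which is exactly where $\Re(\mu)>0$, i.e.\ $|\alpha|<1$, enters), everything reduces to Euler's identity $\sum_{n\ge0}z^n/(q;q)_n=1/(z;q)_\infty$ together with the telescoping $(\alpha;q)_\infty=(\alpha;q)_m\,(\alpha q^m;q)_\infty$ --- which is, coefficient by coefficient, precisely a proof of the specialization of Heine's formula that the paper cites. What your version buys is self-containedness: only Euler's elementary identity is needed, there is no contour bookkeeping, and the role of the hypothesis $|\alpha|<1$ is completely transparent at the single point where Fubini is invoked. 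What the paper's version buys is brevity, by outsourcing the combinatorics to a named summation theorem; it also keeps visible the ``Hadamard product'' structure that the introduction emphasizes, which you acknowledge but deliberately flatten. Both arguments prove the identity for all $x\in\CC$ at once (yours because both sides are entire with equal Taylor expansions), and both are distinct from the second alternative the paper sketches, namely verifying that the Dirichlet series solves the Cauchy problem $y'=\alpha y(qx)-y$, $y(0)=1$, and appealing to uniqueness.
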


\begin{proof}
By considering $F(\mu;q,x)$ as Hadamard product of the following power series: 
$$\sum_{n\ge 0}(\alpha;q)_nx^n,\qquad\sum_{n\ge 0}\frac{(-1)^n}{n!}x^n,
$$
one can find that
$$
F(\mu;q,x)=\frac{(\alpha;q)_\infty}{2\pi i}\,\sum_{n\ge 0}\int_{\vert t\vert=r<1}\frac1{(q;q)_n}\,\frac{\alpha^n}{1-\frac{q^nx}t}\,e^{-t}\,\frac{dt}t\,,
$$
where the integral is taken over the circle in the anti-clockwise sense. 

Moreover, by hypothesis, $|\alpha|<1$; setting $a=q$, $b=\alpha$ and $c=0$ in \cite[p. 521, Theorem 10.9.1]{AAR} implies that
$$
\sum_{n\ge 0}(\alpha;q)_nx^n=(\alpha;q)_\infty\,\sum_{n\ge 0}\frac1{(q;q)_n}\,\frac{\alpha^n}{1-q^nx}\,.
$$
The wanted relation \eqref{equation:Fexp} is thus obtained by Cauchy's formula.
\end{proof}

The right hand side of \eqref{equation:Fexp} represents a \emph{Dirichlet series} in the sense of \cite[Chapter IX, \S8, p. 432-440]{SZ}.
An alternative proof of Proposition \ref{proposition:Fexp} can be done by checking merely that this Dirichlet series converges really to an analytic solution in the complex $x$-plane  of the Cauchy problem of \eqref{equation:alpha} with the initial condition $y(0)=1$; indeed, such analytic solution is unique.

\begin{remark}\label{remark:Fpolynomial}\rm
The poswer series $F(\mu;q,x)$ becomes a polynomial of $x$ if, and only if, $(q^\mu;q)_\infty=0$, that means exactly that $\mu\in\ZZ_{\le 0}\oplus \kappa i\ZZ$. See also Remark \ref{remark:FG}.
\end{remark}

\subsection{An auxiliary functional equation on $F(\mu;q,x)$}\label{subsection:qalpha}
In order to remove the condition $\Re(\mu)>0$ from Proposition \ref{proposition:Fexp}, one shall make use of the following functional relation:
\begin{equation}\label{equation:qalpha}
\partial_xF(\mu;q,x)=(\alpha-1)\,F(\mu+1;q,x)\,.
\end{equation}
Indeed, one can obtain the last formula from direct computation, be taking into account the following identity: $$(\alpha;q)_{n+1}=(1-\alpha)(q\alpha;q)_n\,,\qquad
\forall n\in\NN.$$

\begin{proposition}\label{proposition:qmalpha} For any positive integer $k$, it follows that
\begin{equation}\label{equation:qmalpha}
\partial_x^kF(\mu;q,x)=(-1)^k(\alpha;q)_k\,F(\mu+k;q,x)\,.
\end{equation}
\end{proposition}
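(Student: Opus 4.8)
The plan is to prove the iterated formula \eqref{equation:qmalpha} by induction on $k$, using the single-step relation \eqref{equation:qalpha} as both the base case and the inductive engine. The base case $k=1$ is precisely \eqref{equation:qalpha}, once we rewrite it: since $(\alpha;q)_1 = 1-\alpha = -(\alpha-1)$, the claimed formula at $k=1$ reads $\partial_x F(\mu;q,x) = -(1-\alpha)F(\mu+1;q,x) = (\alpha-1)F(\mu+1;q,x)$, which is exactly \eqref{equation:qalpha}. So no new work is needed there.

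For the inductive step, I would assume \eqref{equation:qmalpha} holds for some $k\geq 1$ and differentiate both sides once more. Applying $\partial_x$ to the right-hand side gives $(-1)^k(\alpha;q)_k\,\partial_x F(\mu+k;q,x)$, and here I would invoke \eqref{equation:qalpha} again, but applied at the shifted parameter $\mu+k$ in place of $\mu$. The key point is that \eqref{equation:qalpha} holds for every complex parameter, so replacing $\mu$ by $\mu+k$ yields
\begin{equation*}
\partial_x F(\mu+k;q,x) = (q^{\mu+k}-1)\,F(\mu+k+1;q,x).
\end{equation*}
Substituting this back, the new prefactor becomes $(-1)^k(\alpha;q)_k\,(q^{\mu+k}-1)$. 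The only thing to check is the bookkeeping of the $q$-Pochhammer symbol: by definition $(\alpha;q)_{k+1} = (\alpha;q)_k\,(1-\alpha q^k) = (\alpha;q)_k\,(1-q^{\mu+k})$, so that $(-1)^k(\alpha;q)_k\,(q^{\mu+k}-1) = (-1)^{k+1}(\alpha;q)_k\,(1-q^{\mu+k}) = (-1)^{k+1}(\alpha;q)_{k+1}$. This matches the claimed coefficient at level $k+1$, completing the induction.

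I do not expect any real obstacle here: the proposition is a routine iteration of the first-order relation, and the entire content lies in tracking the sign $(-1)^k$ and the telescoping factorization of $(\alpha;q)_{k+1}$. The one subtlety worth stating explicitly is that \eqref{equation:qalpha} must be read as a parametrized identity valid for all $\mu\in\CC$, since the induction feeds the shifted parameter $\mu+k$ back into it; this is legitimate because \eqref{equation:qalpha} was itself derived purely algebraically from the coefficient identity $(\alpha;q)_{n+1}=(1-\alpha)(q\alpha;q)_n$, which holds for arbitrary $\alpha=q^\mu$. An alternative, non-inductive route would be to differentiate the series \eqref{equation:F} term by term $k$ times and compare with the series for $F(\mu+k;q,x)$ directly, but the induction is cleaner and avoids manipulating the convergent tail, so that is the approach I would adopt.
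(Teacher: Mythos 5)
Your proof is correct and follows exactly the route the paper indicates: the paper's proof is stated simply as ``Direct calculation by induction on $k$,'' and your argument is precisely that induction, with the base case given by \eqref{equation:qalpha} and the inductive step driven by the telescoping identity $(\alpha;q)_{k+1}=(\alpha;q)_k(1-\alpha q^k)$. Nothing further is needed.
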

\begin{proof}
Direct calculation by induction on $k$.
\end{proof}

The functional relation \eqref{equation:qalpha} gives raise to a characterization of the function $F(\mu;q,x)$, in view of the following

\begin{proposition}\label{proposition:fqalpha}
Consider an analytic function $f(\alpha,x)$ in $\CC\times\CC$. If there exists an entire function $\alpha\mapsto u(\alpha)$ such that
$$
\partial_xf(\alpha,x)=u(\alpha)f(q\alpha,x),
$$
then $f$ is uniquely determined by its values taken at the complex line $x=0$ in $\CC^2$. More precisely, if we set $f_0(\alpha)=f(\alpha,0)$, then $f$ can be expanded in the following manner:
\begin{equation}\label{equation:fqalpha}
f(\alpha,x)=\sum_{n=0}^\infty \frac{f_n(\alpha)}{n!}\,x^n\,,
\end{equation}
where, for all positive integer $n$,
$$
f_{n}(\alpha)=u(\alpha)\cdots u(q^{n-1}\alpha)\,f_0(q^{n}\alpha)\,.
$$
\end{proposition}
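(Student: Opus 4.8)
The plan is to exploit the fact that $f(\alpha,\cdot)$ is entire in $x$ for every fixed $\alpha$, so that it coincides with its own Taylor series about $x=0$, namely $f(\alpha,x)=\sum_{n\ge 0}\frac{\partial_x^nf(\alpha,0)}{n!}\,x^n$, a series converging on all of $\CC$. With this in hand it suffices to identify the coefficients $\partial_x^nf(\alpha,0)$ with the claimed products, after which the uniqueness assertion follows for free: a holomorphic function equals its Taylor series, and these coefficients will be expressed entirely in terms of the datum $f_0$.

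The core step is to prove, by induction on $n\ge 1$, the identity
$$
\partial_x^nf(\alpha,x)=u(\alpha)\,u(q\alpha)\cdots u(q^{n-1}\alpha)\,f(q^n\alpha,x)
$$
valid on all of $\CC\times\CC$. The base case $n=1$ is precisely the hypothesis. For the inductive step I would apply $\partial_x$ to both sides; on the right, since the prefactor $u(\alpha)\cdots u(q^{n-1}\alpha)$ does not depend on $x$ and since differentiating $x\mapsto f(q^n\alpha,x)$ in $x$ produces $(\partial_xf)(q^n\alpha,x)$, one obtains $u(\alpha)\cdots u(q^{n-1}\alpha)\,(\partial_xf)(q^n\alpha,x)$. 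The point to be careful about here is that the functional equation holds \emph{identically} on $\CC\times\CC$, so it remains valid after the substitution $\alpha\mapsto q^n\alpha$, giving $(\partial_xf)(q^n\alpha,x)=u(q^n\alpha)\,f(q^{n+1}\alpha,x)$. Combining these yields the formula with $n$ replaced by $n+1$, closing the induction.

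Evaluating this identity at $x=0$ then gives exactly $f_n(\alpha)=\partial_x^nf(\alpha,0)=u(\alpha)\cdots u(q^{n-1}\alpha)\,f_0(q^n\alpha)$, which is \eqref{equation:fqalpha}, and the expansion converges everywhere because $f(\alpha,\cdot)$ is entire. For the uniqueness claim I would simply observe that every Taylor coefficient $f_n(\alpha)$ is, by the above, an explicit expression built from $u$ and $f_0=f(\cdot,0)$; hence two functions satisfying the same equation and agreeing on the line $x=0$ have identical Taylor series in $x$ and therefore coincide on $\CC\times\CC$.

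There is no serious difficulty here; the only delicate point worth flagging is the chain-rule bookkeeping in the inductive step, namely ensuring that the $q$-dilation acts on the first argument while the differentiation acts on the second, so that the functional equation can be reapplied at the shifted value $q^n\alpha$. For rigour one may also note that the joint analyticity of $f$ justifies the term-by-term differentiation and guarantees convergence of the resulting power series for all $x$.
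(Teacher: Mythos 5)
Your proof is correct and follows essentially the same route as the paper: the paper simply records the one-step recurrence $f_n(\alpha)=u(\alpha)\,f_{n-1}(q\alpha)$ obtained by differentiating the functional equation and evaluating at $x=0$, which is exactly the content of your inductive identity $\partial_x^nf(\alpha,x)=u(\alpha)\cdots u(q^{n-1}\alpha)\,f(q^n\alpha,x)$ specialized to $x=0$. Your version is merely more explicit about the induction and about why the Taylor series converges and determines $f$.
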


\begin{proof}
One may easily notice that $f_n$'s satisfy the recurrent relation
$$
f_n(\alpha)=u(\alpha)\,f_{n-1}(q\alpha)\,,
$$
which allows us to conclude the proof.
\end{proof}

In the case of $f(\alpha,x)=F(\mu;q,x)$,  relation \eqref{equation:qalpha} implies that $u(\alpha)=\alpha-1$ and $f_0(\alpha)=1$.

\subsection{Asymptotic behaviour of $F(\mu;q,x)$ in the left half-plane} Under the condition $\Re(\mu)>0$, the formula \eqref{equation:Fexp} implies that the first term $(\alpha;q)_\infty\,e^{-x}$ is a dominating term of $F(\mu;q,x)$ as $\Re (x)\to -\infty$. The general case can be treated with the help of Proposition \ref{proposition:qmalpha}, as shown in the following

\begin{theorem}\label{theorem:Fexp}
Let $\alpha=q^\mu\in\CC^*$. The following limit holds  as $x\to\infty$ in the left half-plane $\CC^-$:
  \begin{equation}\label{equation:Fexp-}
 \lim_{\Re( x)\to-\infty} e^x F(\mu;q,x)= (\alpha;q)_\infty\,.
 \end{equation}
Moreover,  for any given open sector $V=S(a,b)$ with $\frac\pi2<a<b<\frac{3\pi}2$, there exists a positive constant $C_V$ such that the following inequality holds for all $x\in V$:
\begin{equation}\label{equation:Fexpestimates}
 \bigl\vert F(\mu;q,x)-(\alpha;q)_\infty\, e^{-x}\bigr\vert<C_V\,e^{-q\Re( x)}\,.
 \end{equation}
\end{theorem}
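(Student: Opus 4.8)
The plan is to establish the quantitative estimate \eqref{equation:Fexpestimates} first and then read off the limit \eqref{equation:Fexp-} from it, since for any constant $C$ one has $\bigl|e^xF(\mu;q,x)-(\alpha;q)_\infty\bigr|=e^{\Re(x)}\bigl|F(\mu;q,x)-(\alpha;q)_\infty e^{-x}\bigr|\le C\,e^{(1-q)\Re(x)}$, and this tends to $0$ as $\Re(x)\to-\infty$ because $0<q<1$. I would prove the estimate first in the base case $\Re(\mu)>0$, where the Dirichlet representation is available, and then bootstrap to arbitrary $\mu\in\CC$ by means of Proposition \ref{proposition:qmalpha}.

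For the base case $\Re(\mu)>0$ one has $|\alpha|=q^{\Re(\mu)}\in(0,1)$, so the expansion \eqref{equation:Fexp} of Proposition \ref{proposition:Fexp} applies. Splitting off the term $n=0$, which equals exactly $(\alpha;q)_\infty e^{-x}$, gives
$$F(\mu;q,x)-(\alpha;q)_\infty e^{-x}=(\alpha;q)_\infty\sum_{n\ge1}\frac{\alpha^n e^{-q^nx}}{(q;q)_n}.$$
For $\Re(x)\le0$ and $n\ge1$ we have $q^n\le q$, hence $-q^n\Re(x)\le-q\Re(x)$ and $|e^{-q^nx}|=e^{-q^n\Re(x)}\le e^{-q\Re(x)}$; factoring out $e^{-q\Re(x)}$ yields the estimate with the uniform constant $C_0=|(\alpha;q)_\infty|\sum_{n\ge1}|\alpha|^n/(q;q)_n<\infty$, valid on the whole closed half-plane $\Re(x)\le0$. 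Thus in this case both the estimate and the limit hold without any sector restriction.

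For general $\mu$ I would fix an integer $k$ with $\Re(\mu)+k>0$ and set $\Phi(x)=F(\mu;q,x)-(\alpha;q)_\infty e^{-x}$. Using Proposition \ref{proposition:qmalpha}, the factorisation $(\alpha;q)_k\,(\alpha q^k;q)_\infty=(\alpha;q)_\infty$ and $\partial_x^k e^{-x}=(-1)^k e^{-x}$, one obtains
$$\partial_x^k\Phi(x)=(-1)^k(\alpha;q)_k\,\bigl(F(\mu+k;q,x)-(\alpha q^k;q)_\infty e^{-x}\bigr).$$
Since $q^{\mu+k}=\alpha q^k$ and $\Re(\mu+k)>0$, the base case applies to the bracket and gives $|\partial_x^k\Phi(x)|\le C_1\,e^{-q\Re(x)}$ for $\Re(x)\le0$. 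It then remains to pass from a bound on $\partial_x^k\Phi$ to one on $\Phi$; I would do this by integrating $k$ times along the ray from the origin to $x$. Writing $\phi=\arg x\in[a,b]$ and $g=\partial_x^{j-1}\Phi$, we have $g(x)=g(0)+\int_0^{|x|}g'(s e^{i\phi})\,e^{i\phi}\,ds$, and since $\cos\phi<0$ on $(\frac\pi2,\frac{3\pi}2)$,
$$\int_0^{|x|}e^{-q s\cos\phi}\,ds=\frac{1}{q|\cos\phi|}\bigl(e^{-q\Re(x)}-1\bigr).$$
As $|\cos\phi|\ge\delta:=\min(|\cos a|,|\cos b|)>0$ on the closed sub-sector and $e^{-q\Re(x)}\ge1$ on $\CC^-$, each integration reproduces the bound $e^{-q\Re(x)}$ at the cost of enlarging the constant; descending from $j=k$ to $j=1$ yields $|\Phi(x)|\le C_V\,e^{-q\Re(x)}$ on $V=S(a,b)$, whence \eqref{equation:Fexpestimates}, and \eqref{equation:Fexp-} follows as above.

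The main obstacle is precisely this last step, namely recovering the growth of $\Phi$ from that of its $k$-th derivative while keeping the same exponential order $e^{-q\Re(x)}$. The favourable feature exploited here is that $e^{-q\Re(x)}$ is itself large and increasing towards infinity inside $\CC^-$, so ray-integration from the origin both converges and reproduces the correct order; the price is the factor $1/|\cos\phi|$, which blows up as $\phi\to\frac\pi2^+$ or $\phi\to\frac{3\pi}2^-$. This degeneration near the imaginary axis is exactly why the estimate is stated on proper sub-sectors $\frac\pi2<a<b<\frac{3\pi}2$ rather than on all of $\CC^-$.
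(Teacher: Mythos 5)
Your proof is correct and follows essentially the same route as the paper: establish \eqref{equation:Fexpestimates} for $\Re(\mu)>0$ by splitting off the $n=0$ term of the Dirichlet expansion \eqref{equation:Fexp}, then reach general $\mu$ through the derivative relation \eqref{equation:qalpha}/\eqref{equation:qmalpha} and integration from the origin along rays of the sector, where $|\cos(\arg x)|$ is bounded below. The only (cosmetic) difference is that the paper descends one integration at a time, tracking the remainder in the form $h(x)e^{-qx}$ with $h$ bounded on $V$, whereas you bound $\partial_x^k\Phi$ in one stroke and integrate $k$ times.
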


\begin{proof}Let $V$ be an open sector as given in Theorem \ref{theorem:Fexp}. From Proposition \ref{proposition:Fexp}, we obtain the expected relation \eqref{equation:Fexpestimates} while the condition $\Re(\mu)>0$ is assumed. For an arbitrary complex number $\mu$, choose a enough large positive integer $m$ such that $m+\Re(\mu)>0$, and set $\alpha'=\alpha q^m=q^{\mu'}$ with $\mu'=\mu+m$. Therefore, one can write
$$
F(\mu';q,x)=(\alpha';q)_\infty\,e^{-x}+h(x)\,e^{-qx}\,,
$$
where $h$ denotes a  bounded analytic function over $V$.

Let $\beta=\alpha q^{m-1}=q^{\nu}$, with $\nu=\mu'+1$. From \eqref{equation:qalpha}, it follows that
$$
F(\nu;q,x)=(\beta-1)\,\int_0^xF(\mu';q,t)dt\,,
$$
where the integral is taken over the segment going from the point at origin to the point of affix $x$ in $V$.  An elementary calculation shows that
$$
F(\nu;q,x)=(\beta;q)_\infty\,e^{-x}+H(x)\,e^{-qx}\,,
$$
where
$$
H(x)=-(\beta;q)_\infty\,e^{qx}+(\beta-1)\,\int_0^xh(x-t)\,e^{qt}\,dt\,.
$$
Thus, one finds easily that $F(\nu;q,x)$ satisfies the relation \eqref{equation:Fexpestimates} while replacing $\mu$ by $\nu$; therefore, the function $H(x)$ possesses the same property as $h(x)$ for $F(\mu';q,x)$.
Consequently, one can continue this analysis and obtain finally the relation \eqref{equation:Fexpestimates} for all $\mu\in\CC$.

The relation \eqref{equation:Fexp-} is an evident consequence of \eqref{equation:Fexpestimates}.
\end{proof}

From Remark \ref{remark:Fpolynomial}, if $(q^\mu;q)_\infty=0$, then $F(\mu;q,x)$ becomes a polynomial in $x$, thus one obtains the following
\begin{remark}\label{remark:Fexp} \rm
The function
$F(\mu;q,x)$ is exponentially large for $x\in\CC^-$ if, and only if, $\mu\notin\ZZ_{\le 0}\oplus\kappa i\ZZ$.
\end{remark}

On the other hand, the relation \eqref{equation:Fexpestimates} can be improved to any order $N$ as follows.

\begin{theorem}\label{theorem:FexpN}
 Let $\mu\in\CC$, $\alpha=q^\mu$, and let $V=S(a,b)$ be an open sector such that $\frac\pi2<a<b<\frac{3\pi}2$. Then there exists a positive constant $C=C_V$ such that the following estimates hold for all integer $N\ge 1$ and all $x\in V$:
 \begin{equation}\label{equation:FexpestimatesN}
 \bigl\vert F(\mu;q,x)-(\alpha;q)_\infty\,\sum_{n=0}^{N-1}\frac{\alpha^n}{(q;q)_n} e^{-q^nx}\bigr\vert<C^N\,e^{-q^N\Re (x)}\,.
 \end{equation}
\end{theorem}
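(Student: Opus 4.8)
The plan is to follow the two-stage strategy used for Theorem~\ref{theorem:Fexp}: first prove the estimate \eqref{equation:FexpestimatesN} when $\Re(\mu)>0$, where the Dirichlet representation \eqref{equation:Fexp} of Proposition~\ref{proposition:Fexp} is available, and then descend to arbitrary $\mu\in\CC$ by integrating the functional relation \eqref{equation:qalpha} finitely many times. Throughout I write $\alpha=q^\mu$ and denote by $R_N(\mu;q,x)$ the difference bounded in \eqref{equation:FexpestimatesN}. For $x\in V=S(a,b)$ with $\frac\pi2<a<b<\frac{3\pi}2$ one has $\Re(x)<0$, and in fact $-\Re(x)\ge c_0|x|$ with $c_0=\min_{a\le\theta\le b}(-\cos\theta)>0$; in particular $e^{-q^N\Re(x)}\ge1$ for every $N$, a fact I shall use repeatedly to absorb $x$-independent error terms.

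\emph{Base case $\Re(\mu)>0$.} Here $|\alpha|=q^{\Re(\mu)}<1$, so \eqref{equation:Fexp} applies and $R_N(\mu;q,x)$ is exactly the tail $(\alpha;q)_\infty\sum_{n\ge N}\frac{\alpha^n}{(q;q)_n}e^{-q^nx}$. Since $q^n\le q^N$ and $\Re(x)<0$, we have $|e^{-q^nx}|=e^{-q^n\Re(x)}\le e^{-q^N\Re(x)}$ for $n\ge N$; together with $(q;q)_n\ge(q;q)_\infty$ and $\sum_{n\ge N}|\alpha|^n=|\alpha|^N/(1-|\alpha|)$ this bounds the tail by $A\,|\alpha|^N e^{-q^N\Re(x)}$ for a constant $A$ depending only on $\mu,q$. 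Taking $C\ge|\alpha|\max(1,A)$ gives the desired $C^Ne^{-q^N\Re(x)}$, uniformly in $x\in V$ and $N\ge1$.

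\emph{Descent.} Suppose \eqref{equation:FexpestimatesN} holds for $\mu'=\nu+1$ with constant $C'$, and put $\beta=q^\nu$, so $q^{\mu'}=q\beta$. Integrating $\partial_xF(\nu;q,x)=(\beta-1)F(\mu';q,x)$ from $0$ to $x$ and splitting $F(\mu';q,\cdot)$ into its $N$-term partial sum plus $R_N(\mu';q,\cdot)$, I obtain the exact identity
$$R_N(\nu;q,x)=\delta_N+(\beta-1)\int_0^x R_N(\mu';q,t)\,dt,\qquad \delta_N=1-(\beta;q)_\infty\sum_{n=0}^{N-1}\frac{\beta^n}{(q;q)_n}\,.$$
The two elementary identities $(q\beta)^n/q^n=\beta^n$ (which governs $\int_0^x e^{-q^nt}\,dt=(1-e^{-q^nx})/q^n$) and $(1-\beta)(q\beta;q)_\infty=(\beta;q)_\infty$ are exactly what convert the integrated $N$-term sum of $F(\mu')$ into the $N$-term sum of $F(\nu)$, leaving only the $x$-independent residue $\delta_N=R_N(\nu;q,0)$. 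Using the inductive bound together with $-\Re(t)\ge c_0|t|$ gives $\bigl|\int_0^x R_N(\mu';q,t)\,dt\bigr|\le (C')^N e^{-q^N\Re(x)}/(c_0q^N)$, while $(q;q)_n\ge(q;q)_\infty$ yields the crude bound $|\delta_N|\le C_1^N$ for some $C_1$. As $e^{-q^N\Re(x)}\ge1$, both terms are at most $(\text{const})^Ne^{-q^N\Re(x)}$, so \eqref{equation:FexpestimatesN} holds for $\nu$ with a new $N$-independent constant $C''$ (it degrades from $C'$ by a bounded factor and the harmless $q^{-1}$). Choosing $m$ with $\Re(\mu+m)>0$ and applying this step $m$ times gives \eqref{equation:FexpestimatesN} for the given $\mu$, with a final constant $C=C_V$ depending only on $\mu,q,V$.

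The main obstacle is the bookkeeping in the descent: I must ensure that neither the $x$-independent residue $\delta_N$ nor the factor $q^{-N}$ arising from integrating $e^{-q^nt}$ destroys the uniformity in $N$. Both stay under control because $\delta_N$ is a partial sum whose coefficients are bounded by $1/(q;q)_\infty$, so it grows at most geometrically in $N$, and because only finitely many descent steps occur, so the accumulated $q^{-N}$ merely replaces $C$ by $q^{-m}C$; the residue is then swallowed by $e^{-q^N\Re(x)}\ge1$. Verifying the two algebraic identities that align the partial sums under integration is the only place requiring a short explicit computation.
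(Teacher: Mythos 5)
Your proof is correct and follows precisely the route the paper intends: the paper omits the argument, saying only that it "can be done by a similar approach to the proof of Theorem \ref{theorem:Fexp}", and your two-stage scheme (tail estimate from the Dirichlet series \eqref{equation:Fexp} when $\Re(\mu)>0$, then descent via integrating \eqref{equation:qalpha}) is exactly that approach carried out in detail. Your bookkeeping of the residue $\delta_N$ and the $q^{-N}$ factor is sound, and is in fact more careful than the paper's own sketch for the $N=1$ case.
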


\begin{proof}
We omit the proof, which can be done by a similar approach to the proof of Theorem \ref{theorem:Fexp}.
\end{proof}

\subsection{Power series solutions at infinity involving $G(\mu;q,\frac1x)$}\label{subsection:seriesG}

Replacing respectively $\alpha$ and $y$ by $q^\mu$ and $x^{-\mu}\,(1+\sum_{n\ge 1}a_nx^{-n})$ in  \eqref{equation:alpha} leads us to the following relations:
\begin{equation}\label{equation:muan}
a_{n+1}=\frac{\mu+n}{1-q^{-n-1}}\,a_n\,,
\end{equation}
where $n\ge 0$. Thus one finds the following

\begin{proposition}\label{proposition:seriesG}
For any $k\in\ZZ$, let $\mu_k$ be as in \eqref{equation:nuk}. Then $x^{-\mu_k}\,G(\mu_k;q,\frac1x)$ is an analytic solution of \eqref{equation:alpha} in the Riemann surface $\tilde\CC^*$ of the logarithm.
\end{proposition}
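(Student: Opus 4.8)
The plan is to show that the Laurent-type expansion at infinity whose coefficients obey the recurrence \eqref{equation:muan} is exactly $x^{-\mu_k}G(\mu_k;q,\frac1x)$, and that this formal series is genuinely convergent and analytic on $\tilde\CC^*$. The decisive preliminary point is that every $\mu_k$ produces the \emph{same} value of $\alpha$: since $\kappa=-\frac{2\pi}{\ln q}$, one has $q^{\mu_k}=q^{\mu}\,e^{k\kappa i\ln q}=\alpha\,e^{-2\pi k i}=\alpha$ for all $k\in\ZZ$. Thus replacing $\mu$ by $\mu_k$ does not change the coefficient $\alpha$ in \eqref{equation:alpha}, and the ansatz $y=x^{-\mu_k}\bigl(1+\sum_{n\ge1}a_nx^{-n}\bigr)$ addresses the very same functional-differential equation.

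First I would carry out the substitution that leads to \eqref{equation:muan}, but now with the exponent $\mu_k$ in place of $\mu$. Writing $y=\sum_{n\ge0}a_nx^{-\mu_k-n}$ with $a_0=1$, term-by-term differentiation gives $y'(x)=\sum_{n\ge0}a_n(-\mu_k-n)x^{-\mu_k-n-1}$, while $\alpha\,y(qx)=\alpha\sum_{n\ge0}a_nq^{-\mu_k-n}x^{-\mu_k-n}=\sum_{n\ge0}a_nq^{-n}x^{-\mu_k-n}$, where the last equality uses precisely $\alpha q^{-\mu_k}=1$. Matching the coefficient of $x^{-\mu_k-n-1}$ in $y'=\alpha y(qx)-y$ then yields $a_{n+1}=\frac{\mu_k+n}{1-q^{-n-1}}a_n$, i.e. \eqref{equation:muan} with $\mu_k$ in the place of $\mu$; the lowest power $x^{-\mu_k}$ balances automatically because $\alpha q^{-\mu_k}-1=0$, which is what fixes the indicial exponent and leaves $a_0$ free. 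Solving this recurrence from $a_0=1$ gives $a_n=(-1)^n\frac{(\mu_k)_n\,q^{n(n+1)/2}}{(q;q)_n}$, which are exactly the coefficients appearing in $G(\mu_k;q,\frac1x)$ as defined in \eqref{equation:G}; this identification is a short induction on the ratio $a_{n+1}/a_n$.

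It then remains to upgrade this formal computation into a genuine analytic statement on $\tilde\CC^*$. Here I would invoke that $G(a;q,z)$ is an entire function of $z$ (noted right after \eqref{equation:G}), so $z\mapsto G(\mu_k;q,z)$ converges for every $z\in\CC$ and hence $G(\mu_k;q,\frac1x)$ is analytic for all $x\ne0$; combined with the fact that $x^{-\mu_k}=e^{-\mu_k\Log x}$ is single-valued and analytic on the Riemann surface $\tilde\CC^*$, the product $x^{-\mu_k}G(\mu_k;q,\frac1x)$ is analytic on $\tilde\CC^*$. Because the series converges locally uniformly there, term-by-term differentiation and the substitution $x\mapsto qx$ are legitimate, so the formal verification above is in fact an honest identity of analytic functions, establishing that $x^{-\mu_k}G(\mu_k;q,\frac1x)$ solves \eqref{equation:alpha}.

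The computations are routine; the only genuine subtlety, and the step I would be most careful with, is the bookkeeping of the multivalued factors $x^{-\mu_k}$ and $(qx)^{-\mu_k}$ on $\tilde\CC^*$---specifically, checking that $(qx)^{-\mu_k}=q^{-\mu_k}x^{-\mu_k}$ with $q^{-\mu_k}$ interpreted via the real logarithm $\ln q$, so that the cancellation $\alpha q^{-\mu_k}=1$ holds for every branch and for every $k$ simultaneously. This is exactly where the definition of $\kappa$ and the periodicity $e^{-2\pi k i}=1$ do their work, and it is what makes the whole $\ZZ$-indexed family $\{x^{-\mu_k}G(\mu_k;q,\frac1x)\}$ consist of solutions of one and the same equation.
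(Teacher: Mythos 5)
Your proof is correct and follows essentially the same route as the paper: substitute the ansatz $x^{-\mu_k}\bigl(1+\sum_{n\ge1}a_nx^{-n}\bigr)$ to obtain the recurrence \eqref{equation:muan}, solve it to recover the coefficients $(-1)^n(\mu_k)_n q^{n(n+1)/2}/(q;q)_n$ of $G(\mu_k;q,\tfrac1x)$, and conclude from the entirety of $G$ that the product is analytic on $\tilde\CC^*$. Your extra care with the identities $q^{\mu_k}=\alpha$ and $(qx)^{-\mu_k}=q^{-\mu_k}x^{-\mu_k}$ on the Riemann surface is a welcome elaboration of details the paper leaves implicit, but it is not a different argument.
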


\begin{proof}
By replacing $\mu$ with any $\mu_k$ in the second relation of \eqref{equation:muan}, it follows that
$$
a_n=-\frac{(\mu_k+n-1)}{1-q^n}\,(q^n)\,a_{n-1}=...=(-1)^n\,\frac{(\mu_k)_n}{(q;q)_n}\,q^{n(n+1)/2}
$$
for all $n\in\NN$. One gets thus the expression \eqref{equation:G} for the definition of $G(\mu_k;q,\frac1x)$.

It is obvious that $G(\mu_k;q,x)$ converges for all $x\in\CC$, so that $x^{-\mu_k}\,G(\mu_k;q,\frac1x)$ is analytic on the whole surface $\tilde\CC^*$.
\end{proof}

\begin{remark}\label{remark:seriesFG}\rm
The first assertion of Theorem \ref{theorem:FG} follows from Propositions \ref{proposition:FCinfinity} and \ref{proposition:seriesG}.
\end{remark}

\section{Solving \eqref{equation:alpha} by Laplace integrals}\label{section:Laplace}

In \cite{Ma}, Mahler made use of an integral of the type $\displaystyle \int_{\RR}u(t)e^{xq^{it}}dt$ to find special solution for the following functional equation
\begin{equation}\label{equation:Mahler}
y(x+\omega)-y(x)=\omega f(qx),\quad
\omega\not=0.
\end{equation}
Indeed, this integral permits to transform \eqref{equation:Mahler} into a simple difference equation as follows:
$$
u(t+i)=\frac{e^{\omega q^{it}}-1}\omega\,u(t)\,;
$$
which is clearly equivalent to a first order $q$-difference equation if one writes $s=q^{it}$ and $U(s)=u(t)$.

Almost by the same way,  Laplace type integral will be applied to the functional-differential equation \eqref{equation:alpha}, that will be transformed into a first order $q$-difference equation.

\subsection{$F(\mu;q,x)$ viewed as $q$-analogue of Laplace integral}\label{subsection:qLaplace} In the work~\cite{Ja} of F.~H.~Jackson  (see also \cite[\S 10.1]{AAR}, \cite[\S 1]{DZ}), the $q$-integral of a suitable function $f(t)$ over $[0,1]$ is defined as follows:
$$
\int_0^1f(t)\,d_qt=(1-q)\,\sum_{n\ge 0}f(q^n)q^n.
$$
By means of  this discrete integral, we can express $F(\alpha;q,x)$ as a $q$-integral of Laplace type.

\begin{proposition}\label{proposition:Fqintegral}
If $\alpha=q^\mu$ and $\Re(\mu)>0$, then the following relation holds for all $x\in\CC$:
\begin{equation}\label{equation:Fqintegral}
F(\mu;q,x)=\frac{(\alpha;q)_\infty}{(1-q)\,(q;q)_\infty}\,\int_0^1 (qt;q)_\infty\,e^{-tx}\,t^\mu\,\frac{d_qt}t\,.
\end{equation}
\end{proposition}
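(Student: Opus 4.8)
The plan is to reduce everything to the Dirichlet series representation already established in Proposition \ref{proposition:Fexp}, since the right-hand side of \eqref{equation:Fexp} is visibly a $q$-series of exactly the shape that Jackson's $q$-integral produces. First I would substitute the definition $\int_0^1 g(t)\,d_qt=(1-q)\sum_{n\ge0}g(q^n)q^n$ into the right-hand side of \eqref{equation:Fqintegral}, taking $g(t)=(qt;q)_\infty\,e^{-tx}\,t^{\mu-1}$ (the factor $d_qt/t$ supplying the extra $t^{-1}$). Evaluating the summand at $t=q^n$ and including the weight $q^n$ produces the generic term $(q^{n+1};q)_\infty\,e^{-q^nx}\,(q^n)^{\mu-1}\,q^n$.

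The whole computation then rests on two elementary identities. On one hand, the factorization $(q;q)_\infty=(q;q)_n\,(q^{n+1};q)_\infty$ lets me replace $(q^{n+1};q)_\infty$ by $(q;q)_\infty/(q;q)_n$. On the other hand, the powers of $q$ collapse, since $(q^n)^{\mu-1}\,q^n=q^{n\mu}=(q^\mu)^n=\alpha^n$. Substituting both gives
\begin{equation*}
\int_0^1(qt;q)_\infty\,e^{-tx}\,t^\mu\,\frac{d_qt}t=(1-q)\,(q;q)_\infty\,\sum_{n\ge0}\frac{\alpha^n\,e^{-q^nx}}{(q;q)_n}\,.
\end{equation*}
Multiplying by the prefactor $(\alpha;q)_\infty/\bigl((1-q)(q;q)_\infty\bigr)$ cancels the factor $(1-q)(q;q)_\infty$ and leaves precisely $(\alpha;q)_\infty\sum_{n\ge0}\alpha^n e^{-q^nx}/(q;q)_n$, which is the right-hand side of \eqref{equation:Fexp}, and hence equals $F(\mu;q,x)$.

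The only point demanding care is convergence, and it is exactly where the hypothesis $\Re(\mu)>0$ enters: it forces $\vert\alpha\vert=q^{\Re(\mu)}<1$, so the resulting $q$-series converges and the rearrangement of the Jackson sum is legitimate. This is the same regime in which Proposition \ref{proposition:Fexp} was proved, so I may freely invoke \eqref{equation:Fexp} to close the argument for all $x\in\CC$. I do not anticipate a genuine obstacle: the proof is essentially a bookkeeping exercise with the $q$-Pochhammer factorization and the exponent arithmetic, the substantive analytic content having already been absorbed into Proposition \ref{proposition:Fexp}.
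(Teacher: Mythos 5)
Your proof is correct and follows essentially the same route as the paper: both reduce \eqref{equation:Fqintegral} to the Dirichlet series \eqref{equation:Fexp} via the factorization $(q;q)_n=(q;q)_\infty/(q^{n+1};q)_\infty$ and the exponent identity $(q^n)^{\mu}=\alpha^n$, you merely run the computation from the $q$-integral side rather than from the series side. The remark on $\vert\alpha\vert<1$ matches the paper's use of the hypothesis $\Re(\mu)>0$.
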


\begin{proof}Under the assumption, it follows that $\vert \alpha\vert<1$, so that one can express $F(\alpha;q,x)$ by the Dirichlet series \eqref{equation:Fexp}. Thus, putting together $\alpha^n={q^n}^\mu$ and
$$
(q;q)_n=\frac{(q;q)_\infty}{(q\cdot q^{n};q)_\infty}
$$
in the expansion \eqref{equation:Fexp} permits to get the wanted $q$-integral representation \eqref{equation:Fqintegral}.
\end{proof}

\subsection{From \eqref{equation:alpha} to a $q$-difference equation via Laplace transform}\label{subsection:Laplace}
Let $L$ be a smooth loop in the complex $t$-plane and let $qL=\{qt:t\in L\}$ be the loop obtained as the image of $L$ for  the operator $t\mapsto qt$. Consider the following Laplace integral:
\begin{equation}\label{equation:Laplace}
y(x)=\int_Lf(t)e^{-tx}\frac{dt}t\,,
\end{equation}
where $f$ denotes a unknown function.
If we suppose $L$ and $f$ to be chosen such that
\begin{equation}\label{equation:Lq}
\int_{qL}f(t)e^{-tx}\frac{dt}t=\int_Lf(t)e^{-tx}\frac{dt}t,
\end{equation}
then the equation \eqref{equation:alpha} will be transformed as follows:
$$
-tf(t)=\alpha f(\frac tq)-f(t),
$$
or, equivalently,
\begin{equation}\label{equation:f}
(1-qt)f(qt)=\alpha f(t)\,.
\end{equation}

Equation \eqref{equation:f} is Fuchsian at $t=0$ and admits an irregular singular point at $t=\infty$; see \cite{Bi,DRSZ,Ra0,Sa1,Zh2}. If we write $f=gh$, we may decompose \eqref{equation:f} into two $q$-difference equations:
\begin{equation}\label{equation:g}
(1-qt)g(qt)=g(t)
\end{equation}
and \begin{equation}\label{equation:h}
h(qt)=\alpha h(t).
\end{equation}
By iterating \eqref{equation:g}, one obtains easily the following solution:
\begin{equation}\label{equation:seriesg}
g(t)=(qt;q)_\infty=\sum_{n\ge 0}\frac{q^{n(n+1)/2}}{(q;q)_n}(-t)^n,
\end{equation}
which is an entire function with respect to the variable $t$. The power series expansion in \eqref{equation:seriesg} is due to Euler; see \cite[p. 490, Corollary 10.2.2 (b)]{AAR}.

On the other hand, we may make use of several  solutions of \eqref{equation:h} and, by this way, we will get different solutions of \eqref{equation:f}. The choices we will consider are the following:
 \begin{equation}\label{equation:hchoice1}
 h(t)=t^\nu,\quad q^\nu=\alpha,
 \end{equation}
 or
 \begin{equation}\label{equation:hchoice2}
 h(t)=\frac{\theta(\lambda t)}{\theta(\mu t)}\,,\quad
 \frac \mu\lambda=\alpha.
\end{equation}

In Section \ref{section:character}, we shall consider links between the two solutions of \eqref{equation:h} and, in Section \ref{section:2Laplace}, two types of Laplace integrals will be studied.

\section{Remarks on character functions}\label{section:character}

By \cite{Sa1} and \cite{Sa2}, any Fuchsian type linear $q$-difference equation whose coefficients are analytic functions at $x=0$ has a fundamental solution made up of analytic functions in a whole neighborhood of the origin excepted over some $q$-spirals. One main idea consists of making use of the character function $\displaystyle x\mapsto \frac{\theta(\mu x)}{\theta(\lambda x)}$ instead of the multi-valued function $x^\nu$, the last being traditionally used in this domain since Birkhoff \cite{Bi}. Indeed, if $q^\nu=\frac\lambda\mu=a$ and $\sigma_q f(x)=f(qx)$, it follows:
$$
\frac{\sigma_qx^\nu}{x^\nu}=\frac{\sigma_q\frac{\theta(\mu x)}{\theta(\lambda x)}}{\frac{\theta(\mu x)}{\theta(\lambda x)}}=a\,.
$$
In this case, one finds that $\displaystyle x^\nu\,\frac{\theta(\lambda x)}{\theta(\mu x)}$ is $\sigma_q$-invariant or is called to be $q$-periodic.

In the following, we shall make use of $\theta$-modular relation to find the Fourier expansion of such $q$-periodic functions.

\subsection{Character functions expressed by means of $\theta$-modular relation}
For any $x\in\tilde\CC^*$, let
\begin{equation}\label{equation:eqt}
e(q,x)=e(x)=e^{-\frac{\log^2\frac{x}{\sqrt q}}{2\ln q}}\,.
\end{equation}
It is easy to see that both $\theta(x)$ and $e(x)$ satisfy the functional $q$-difference equation $xy(qx)=y(x)$. Moreover, the well-known modular formula on $\theta(q,x)$ says that, if we set
$$
q^*=e^{-2\pi\kappa},\quad
x^*=\iota_q(x)= x^{-\kappa i},
$$
then the following relation holds \cite[p. 498, (10.4.2)]{AAR}:
$$
\theta(q,\sqrt q\,x)=\sqrt\kappa\,e(q,\sqrt q\,x)\,\theta(q^*,\sqrt{q^*}\,x^*)\,
$$
or, equivalently,
\begin{equation}\label{equation:thetamodular}
\theta(q,-x)={\sqrt{\kappa}}\, e(q,-x)\,\theta(q^*,-x^*),
\end{equation}
where $-x=e^{i\pi}x$ in $e(q,-x)$. See \cite{Zh6} for a point of view of $q$-series.

\begin{lemma}\label{lemma:character} The following identity holds for all $\mu\in\CC^*\subset\tilde\CC^*$:
\begin{equation}\label{equation:2theta}
\frac{\theta(q,-{q^\mu} x)}{\theta(q,-x)}=q^{-\mu(\mu-1)/2}\,({e^{\pi i}}x)^{-\mu}\,\frac{\theta(q^*,-{e^{2\pi i\mu}}{x^*})}{\theta(q^*,-{x^*})}\,.
\end{equation}
\end{lemma}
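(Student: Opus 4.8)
The plan is to apply the $\theta$-modular relation \eqref{equation:thetamodular} separately to the numerator $\theta(q,-q^\mu x)$ and to the denominator $\theta(q,-x)$, and then to track what happens to each of the three resulting factors $\sqrt\kappa$, $e(q,\cdot)$, and $\theta(q^*,\cdot)$ under the quotient. The $\sqrt\kappa$ prefactors cancel at once, so the whole identity reduces to two computations: the transformation of the modular variable $x^*$ under multiplication of $x$ by $q^\mu$, and the ratio of the exponential factors $e$.

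First I would record the effect of $\iota_q$ on the argument $q^\mu x$. Since $x^*=x^{-\kappa i}$ and $\kappa=-2\pi/\ln q$ by \eqref{equation:kappa}, one has $(q^\mu)^{-\kappa i}=e^{-\kappa i\,\mu\ln q}=e^{2\pi i\mu}$, whence $(q^\mu x)^*=e^{2\pi i\mu}\,x^*$. This is exactly the argument appearing on the right-hand side of \eqref{equation:2theta}, so applying \eqref{equation:thetamodular} to numerator and denominator already produces the factor $\theta(q^*,-e^{2\pi i\mu}x^*)/\theta(q^*,-x^*)$ together with the surviving quotient $e(q,-q^\mu x)/e(q,-x)$.

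The remaining, and only genuinely computational, step is to evaluate this last quotient from the definition \eqref{equation:eqt}. Writing $A=\log\frac{-x}{\sqrt q}=i\pi+\log x-\tfrac12\ln q$, with $-x=e^{i\pi}x$ read exactly as in \eqref{equation:thetamodular}, one has $\log\frac{-q^\mu x}{\sqrt q}=A+\mu\ln q$, so that the quotient equals $\exp\bigl(-\frac{(A+\mu\ln q)^2-A^2}{2\ln q}\bigr)=\exp\bigl(-A\mu-\frac{\mu^2\ln q}{2}\bigr)$. Substituting the value of $A$ and collecting terms gives $-A\mu-\frac{\mu^2\ln q}{2}=-i\pi\mu-\mu\log x-\frac{\mu(\mu-1)}{2}\ln q$, i.e. $e(q,-q^\mu x)/e(q,-x)=q^{-\mu(\mu-1)/2}(e^{\pi i}x)^{-\mu}$. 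Multiplying this by the $\theta(q^*,\cdot)$ ratio from the previous paragraph yields precisely \eqref{equation:2theta}.

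The one point to watch is the bookkeeping of the logarithm branch: everything must be read on $\tilde\CC^*$, and the factor $e^{\pi i}$ (rather than a bare sign) in $(e^{\pi i}x)^{-\mu}$ arises exactly because $\log(-x)=i\pi+\log x$. Keeping $-x=e^{i\pi}x$ consistent throughout, as in \eqref{equation:thetamodular}, is what makes the linear-in-$\mu$ part of the exponent come out as $-i\pi\mu-\mu\log x$ and hence reassemble into $(e^{\pi i}x)^{-\mu}$. Once $A$ is introduced the quadratic algebra in the exponent is routine, so no real obstacle remains beyond this branch bookkeeping.
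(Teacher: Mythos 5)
Your proof is correct and follows exactly the route the paper intends: the paper's own proof is the one-line remark that the identity ``follows directly from \eqref{equation:thetamodular}'', and your computation (cancellation of $\sqrt\kappa$, the identity $(q^\mu x)^*=e^{2\pi i\mu}x^*$, and the evaluation of $e(q,-q^\mu x)/e(q,-x)$ with the branch $\log(-x)=i\pi+\log x$) is precisely the verification being left to the reader. The algebra in the exponent checks out, including the reassembly of $-i\pi\mu-\mu\log x$ into $(e^{\pi i}x)^{-\mu}$.
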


\begin{proof}
It follows directly from \eqref{equation:thetamodular}.
\end{proof}

Remark that if $\mu=n\in\ZZ$, the relation \eqref{equation:2theta} can be read as follows:
\begin{equation}\label{equation:thetafunctionalequation}
\theta(q^nx)=q^{-n(n-1)/2}\,x^{-n}\,\theta(x) \,.
\end{equation}

\subsection{Decomposition of character functions into Laurent series}\label{subsection:characterfunction}
The Jacobi triple  product formula says that
\begin{equation}\label{equation:Jacobitriple}
\theta(x)=(q,-x,-\frac qx;q)_\infty\,.
\end{equation}
Therefore, one finds that for any given $\lambda\in\CC^*\setminus q^\ZZ$, the function $\displaystyle x\mapsto\frac{\theta(-\lambda x)}{\theta(-x)}$ is analytic over $\CC^*\setminus q^\ZZ$.

\begin{lemma}\label{lemma:Laurentseries}
Let $\lambda\in\CC^*\setminus q^\ZZ$ and let $m\in \ZZ$. If $q^m<|x|<q^{m-1}$, then the following Laurent series expansion holds:
\begin{equation}\label{equation:Laurentseries}
\frac{\theta(-\lambda x)}{\theta(-x)}=\frac{\lambda^{1-m}\,\theta(-\lambda)}{(q;q)_\infty^3}\,\sum_{\ell\in\ZZ}\frac{(q^{1-m}x)^\ell}{1-\lambda q^\ell}\,.
\end{equation}
\end{lemma}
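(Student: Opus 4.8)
The plan is to treat $R(x):=\theta(-\lambda x)/\theta(-x)$ as a single-valued analytic function on the pole-free annulus $q^m<|x|<q^{m-1}$, and to pin down its Laurent coefficients by a residue computation together with the quasi-periodicity forced on $R$ by the functional equation \eqref{equation:thetafunctionalequation}. First I would record that quasi-periodicity. Taking $n=1$ in \eqref{equation:thetafunctionalequation} gives $\theta(qw)=w^{-1}\theta(w)$; applying this to $w=-\lambda x$ in the numerator and to $w=-x$ in the denominator yields
\[
R(qx)=\frac{(-\lambda x)^{-1}\theta(-\lambda x)}{(-x)^{-1}\theta(-x)}=\lambda^{-1}R(x).
\]
By the Jacobi triple product \eqref{equation:Jacobitriple} one has $\theta(-x)=(q;q)_\infty\,(x;q)_\infty\,(q/x;q)_\infty$, which vanishes exactly and simply at the points $x\in q^{\ZZ}$. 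Hence $R$ is analytic on $\CC^*\setminus q^{\ZZ}$, in particular on the open annulus $q^m<|x|<q^{m-1}$, which contains none of the poles; there $R$ admits a Laurent expansion $R(x)=\sum_{\ell\in\ZZ}a_\ell x^\ell$, and the whole task is to compute $a_\ell$.

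Next I would compute the residues of $R$. Set $r_j=\Res_{x=q^j}R(x)$. Near $x=1$ the triple product gives $\theta(-x)\sim (q;q)_\infty^3\,(1-x)$, so that
\[
r_0=\frac{\theta(-\lambda)}{\frac{d}{dx}\theta(-x)\big|_{x=1}}=-\frac{\theta(-\lambda)}{(q;q)_\infty^3}.
\]
From $R(x)=\lambda R(qx)$, combined with the elementary rule $\Res_{x=a}g(qx)=q^{-1}\Res_{u=qa}g(u)$, one gets the recursion $r_{j+1}=(q/\lambda)\,r_j$, whence
\[
r_m=-\Bigl(\frac q\lambda\Bigr)^{m}\frac{\theta(-\lambda)}{(q;q)_\infty^3}.
\]

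Finally I would extract the coefficients by a contour shift. Fix $\rho$ with $q^m<\rho<q^{m-1}$ and write $a_\ell=\frac1{2\pi i}\oint_{|x|=\rho}R(x)\,x^{-\ell-1}\,dx$. The annular region $q\rho<|x|<\rho$ contains exactly the single pole $x=q^m$, so the residue theorem gives
\[
\frac1{2\pi i}\oint_{|x|=\rho}R(x)x^{-\ell-1}dx-\frac1{2\pi i}\oint_{|x|=q\rho}R(x)x^{-\ell-1}dx=r_m\,q^{-m(\ell+1)}.
\]
Substituting $x\mapsto qx$ in the second integral and invoking $R(qx)=\lambda^{-1}R(x)$ shows that this integral equals $\lambda^{-1}q^{-\ell}a_\ell$, so I am left with the scalar equation $a_\ell\bigl(1-\lambda^{-1}q^{-\ell}\bigr)=r_m\,q^{-m(\ell+1)}$. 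Inserting $r_m$ and simplifying (using $1-\lambda^{-1}q^{-\ell}=-(1-\lambda q^\ell)/(\lambda q^\ell)$) yields
\[
a_\ell=\frac{\lambda^{1-m}\,\theta(-\lambda)}{(q;q)_\infty^3}\,\frac{(q^{1-m})^\ell}{1-\lambda q^\ell},
\]
which is precisely the claimed expansion \eqref{equation:Laurentseries}. The steps I expect to be the most delicate are purely in the bookkeeping: verifying that exactly one pole $x=q^m$ lies between the two circles, carrying the sign and scaling factors correctly through the contour substitution, and checking that the asymptotics of $1/(1-\lambda q^\ell)$ (which tends to $1$ as $\ell\to+\infty$ and to $0$ like $-\lambda^{-1}q^{-\ell}$ as $\ell\to-\infty$) make the resulting series converge exactly on $q^m<|x|<q^{m-1}$, matching the hypothesis and confirming that $\lambda\in\CC^*\setminus q^{\ZZ}$ is needed only to keep the poles distinct.
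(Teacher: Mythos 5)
Your proof is correct, but it takes a genuinely different route from the paper. The paper disposes of the lemma in one line by citing Ramanujan's ${}_1\psi_1$ summation formula: setting $a=\lambda$, $b=q\lambda$ there collapses the bilateral series to the stated ratio of theta functions, and replacing $x$ by $q^{1-m}x$ handles the shift of annulus. You instead give a self-contained complex-analytic argument: the quasi-periodicity $R(qx)=\lambda^{-1}R(x)$ from \eqref{equation:thetafunctionalequation}, the simple poles on $q^{\ZZ}$ read off from the triple product \eqref{equation:Jacobitriple}, the residue $r_0=-\theta(-\lambda)/(q;q)_\infty^3$ propagated by $r_{j+1}=(q/\lambda)r_j$, and the contour shift $|x|=\rho\to|x|=q\rho$ picking up exactly the pole at $q^m$ to produce the scalar equation $a_\ell(1-\lambda^{-1}q^{-\ell})=r_m q^{-m(\ell+1)}$. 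I checked the bookkeeping (the single pole $q^m$ in the region $q\rho<|x|<\rho$, the factor $q^{-\ell}\lambda^{-1}$ from the substitution, and the final simplification to $a_\ell=\lambda^{1-m}\theta(-\lambda)(q^{1-m})^\ell/\bigl((q;q)_\infty^3(1-\lambda q^\ell)\bigr)$) and it is all right, as is your convergence analysis showing the series lives exactly on $q^m<|x|<q^{m-1}$. What your approach buys is independence from the ${}_1\psi_1$ machinery: it only uses the functional equation and the triple product, and it makes transparent why the expansion is confined to one annulus and why $\lambda\notin q^{\ZZ}$ is needed (nonvanishing of the denominators $1-\lambda q^\ell$ and genuine simple poles). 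What the paper's citation buys is brevity and a pointer to the more general identity of which this is the $b=aq$ degeneration.
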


\begin{proof}
This can be seen as a special case of Ramanujan's ${}_1\psi_1$-summation formula. Indeed, putting $a=\lambda$, $b=q\lambda$ and replacing $x$ by $q^{1-m}x$ in \cite[p. 502, (10.5.3)]{AAR} yields our wanted formula.
\end{proof}

\subsection{Fourier series expansion of character functions}

We shall conclude this section by proving the following

\begin{theorem}\label{theorem:Fouriercharacter}Let $\mu\in\CC\setminus\ZZ$ and let $m\in\ZZ$.
Then, the following relation holds for all $x\in S(-2m\pi ,2(1- m)\pi)\subset\tilde\CC^*$:
\begin{equation}\label{equation:Fouriercharacter}
\frac{\theta(-q^\mu x)}{\theta(-x)}=C(q,m,\mu)\,x^{-\mu}\,\sum_{\ell\in\ZZ}\frac{e^{2\pi(m-1)\kappa\ell}}{1-e^{2\pi i(\mu+ \kappa i\ell)}}\,x^{-\kappa i\ell},
\end{equation}
where $\kappa$ is given as in \eqref{equation:kappa} and where
\begin{equation}\label{equation:Cqmmu}
C(q,m,\mu)=\frac{\kappa\,(q^\mu,q^{1-\mu};q)_\infty}{i\,(q,q;q)_\infty}\,e^{2(1-m)\pi i\mu}\,.
\end{equation}
\end{theorem}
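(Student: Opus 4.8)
The plan is to combine the two lemmas of this section in sequence: first use Lemma \ref{lemma:character} to trade the $q$-character function for a $q^*$-character function, and then expand the latter by the Ramanujan-type Laurent formula of Lemma \ref{lemma:Laurentseries} applied with base $q^*$. The explicit constant $C(q,m,\mu)$ will then emerge from evaluating one special value of a theta function by a second use of the modular formula \eqref{equation:thetamodular}.

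First I would apply \eqref{equation:2theta} to rewrite the left-hand side as
$$
\frac{\theta(q,-q^\mu x)}{\theta(q,-x)}=q^{-\mu(\mu-1)/2}\,(e^{\pi i}x)^{-\mu}\,\frac{\theta(q^*,-e^{2\pi i\mu}x^*)}{\theta(q^*,-x^*)}\,,
$$
and then invoke Lemma \ref{lemma:Laurentseries} with $q$ replaced by $q^*$, with $\lambda=e^{2\pi i\mu}$, and with $x$ replaced by $x^*$. The crucial geometric point is that the sector hypothesis $x\in S(-2m\pi,2(1-m)\pi)$ translates \emph{exactly} into the annulus hypothesis $(q^*)^m<|x^*|<(q^*)^{m-1}$ with the \emph{same} integer $m$: since $x^*=x^{-\kappa i}$ one has $|x^*|=e^{\kappa\arg x}$, and because $q^*=e^{-2\pi\kappa}$ the inequalities $(q^*)^m<|x^*|<(q^*)^{m-1}$ become $-2m\pi<\arg x<2(1-m)\pi$ after dividing through by $\kappa>0$. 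For the expansion to be legitimate one needs $\lambda=e^{2\pi i\mu}\notin(q^*)^{\ZZ}$, i.e. $\mu\notin\ZZ\oplus\kappa i\ZZ$, which is also precisely the condition ensuring that no denominator on the right of \eqref{equation:Fouriercharacter} vanishes; the hypothesis $\mu\notin\ZZ$ guarantees that the ratio on the left is a genuine character function rather than a monomial.

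Carrying out the substitution, the general term furnished by Lemma \ref{lemma:Laurentseries} is $\dfrac{((q^*)^{1-m}x^*)^\ell}{1-e^{2\pi i\mu}(q^*)^\ell}$; using $(q^*)^\ell=e^{-2\pi\kappa\ell}$, $(x^*)^\ell=x^{-\kappa i\ell}$ and the identity $e^{2\pi i\mu}(q^*)^\ell=e^{2\pi i(\mu+\kappa i\ell)}$, one checks this equals $\dfrac{e^{2\pi(m-1)\kappa\ell}\,x^{-\kappa i\ell}}{1-e^{2\pi i(\mu+\kappa i\ell)}}$, so the series already coincides term by term with the one in \eqref{equation:Fouriercharacter}. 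After pulling out the factor $x^{-\mu}$ from $(e^{\pi i}x)^{-\mu}=e^{-\pi i\mu}x^{-\mu}$, it then remains to identify the prefactor, that is, to prove
$$
q^{-\mu(\mu-1)/2}\,e^{-\pi i\mu}\,e^{2\pi i\mu(1-m)}\,\frac{\theta(q^*,-e^{2\pi i\mu})}{(q^*;q^*)_\infty^3}=C(q,m,\mu)\,.
$$
The factor $e^{2\pi i\mu(1-m)}$ cancels against the $e^{2(1-m)\pi i\mu}$ in \eqref{equation:Cqmmu}, so the whole matter reduces to evaluating $\theta(q^*,-e^{2\pi i\mu})$.

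This last evaluation is where I would apply the modular formula \eqref{equation:thetamodular} a second time, now in reverse. Setting $x=q^\mu$ gives $x^*=(q^\mu)^{-\kappa i}=e^{2\pi i\mu}$ (using $-\kappa i\ln q=2\pi i$), whence $\theta(q^*,-e^{2\pi i\mu})=\theta(q,-q^\mu)/(\sqrt\kappa\,e(q,-q^\mu))$. The numerator I would expand by the Jacobi triple product \eqref{equation:Jacobitriple} into $\theta(q,-q^\mu)=(q;q)_\infty\,(q^\mu,q^{1-\mu};q)_\infty$, producing the $(q^\mu,q^{1-\mu};q)_\infty$ of \eqref{equation:Cqmmu}. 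To finish I also need the eta-type relation between $(q^*;q^*)_\infty$ and $(q;q)_\infty$: writing $q=e^{2\pi i\tau}$ makes $q^*$ the image of $q$ under $\tau\mapsto-1/\tau$, and the Dedekind transformation $\eta(-1/\tau)=\sqrt{-i\tau}\,\eta(\tau)$ yields $(q^*;q^*)_\infty=\kappa^{-1/2}\,e^{(\ln q)/24-\pi^2/(6\ln q)}(q;q)_\infty$. I expect the \emph{main obstacle} to be the final bookkeeping: one must assemble the factors from $q^{-\mu(\mu-1)/2}$, $e^{-\pi i\mu}$, the explicit value of $e(q,-q^\mu)$ read off from \eqref{equation:eqt}, and the three eta-factors, and verify that the $\kappa$-powers ($\kappa^{-1/2}$ from the modular formula and $\kappa^{3/2}$ from $1/(q^*;q^*)_\infty^3$) collapse to a single $\kappa$, while all $\pi^2/(2\ln q)$ contributions cancel and the quadratic-in-$\mu$ exponentials recombine to reproduce exactly $q^{-\mu(\mu-1)/2}e^{-\pi i\mu}$, leaving the clean constant $\kappa/(i\,(q,q;q)_\infty)$. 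This computation is routine but delicate, and requires care with the branch of the logarithm in $e(q,-q^\mu)$, where $-q^\mu=e^{i\pi}q^\mu$ as prescribed after \eqref{equation:thetamodular}.
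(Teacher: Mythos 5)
Your proposal is correct and follows essentially the same route as the paper: Lemma \ref{lemma:character} followed by Lemma \ref{lemma:Laurentseries} in base $q^*$ with $\lambda=e^{2\pi i\mu}$, the sector-to-annulus translation via $|x^*|=e^{\kappa\arg x}$, and then a second application of the modular formula \eqref{equation:thetamodular} together with the triple product \eqref{equation:Jacobitriple} and the $\eta$-relation to identify $C(q,m,\mu)$. The final bookkeeping you flag as delicate is exactly the computation the paper performs (and likewise leaves mostly implicit), and your intermediate constant $q^{-\mu(\mu-1)/2}e^{(1-2m)\pi i\mu}\theta(q^*,-e^{2\pi i\mu})/(q^*;q^*)_\infty^3$ matches the paper's.
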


\begin{proof}
For any $x\in S(-2m\pi,2(1-m)\pi)$, it follows that
$$\Im(\log x)=\arg x\in (-2m\pi,2(1-m)\pi),
$$ so that the following relation holds:
$$\vert x^*\vert=e^{\kappa\,\arg(x)}\in ({q^*}^m,{q^*}^{m-1}),
$$
where $q^*=e^{4\pi^2/\ln q}=e^{-2\pi\kappa}$.
Thus, by Lemmas \ref{lemma:character} and \ref{lemma:Laurentseries}, if one writes
$$
C(q,m,\mu)=\frac{q^{-\mu(\mu-1)/2}\,e^{(1-2m)\pi i\mu}\,\theta(q^*,-e^{2\pi i\mu})}{(q^*;q^*)_\infty^3}\,,
$$
then one gets the following identity:
$$
\frac{\theta(-q^\mu x)}{\theta(-x)}=C(q,m,\mu)\,x^{-\mu}\,\sum_{\ell\in\ZZ}\frac{{q^*}^{(1-m)\ell}}{1-e^{2\pi i\mu}\,{q^*}^\ell}\,{x^*}^\ell\,.
$$
Applying the $\theta$-modular formula \eqref{equation:thetamodular} to $\theta(q^*,-e^{2\pi i\mu})$ yields that
$$
\theta(q^*,-e^{2\pi i\mu})=\frac{q^{1/8}}{i\,\sqrt\kappa}\,e^{\mu\pi i+\kappa\pi/4}\,q^{\mu(\mu-1)/2}\,\theta(q,-q^\mu)\,\,;
$$ thus, by considering the $\eta$-modular relation \cite[p. 538, Theorem 10.12.8]{AAR}:
$$
(q^*;q^*)_\infty=\frac{q^{1/24}}{\sqrt\kappa}\,e^{\kappa\pi/12}\,(q;q)_\infty\,,
$$
 one finds the given expression \eqref{equation:Cqmmu} for $C(q,m,\mu)$. This ends the proof of the expected relation \eqref{equation:Fouriercharacter}.
\end{proof}

\begin{remark}\rm
If $\mu\to n\in\ZZ$, one can notice that
$$C(q,m,\mu)\sim (-1)^n\frac\kappa i\,q^{-n(n-1)/2}\,(1-q^{n-\mu})\,,
$$
so that the relation \eqref{equation:Fouriercharacter} is reduced to the same formula as \eqref{equation:thetafunctionalequation}, by replacing $x$ with  $-x$.
\end{remark}

\section{Two Laplace integrals}\label{section:2Laplace}

Let us come back to the Laplace integral \eqref{equation:Laplace} introduced in \S \ref{subsection:Laplace}. The loop $L$ will be chosen among two types of curves: closed curves near the point at origin, which will be denoted as $\cC$, and half straight-lines starting from the point at origin.

\subsection{Function $I(\alpha;q,x)$}\label{subsection:I}

Let $\alpha\in\CC^*$. Let $\cC$ be any smooth and anti-clockwise Jordan curve whose interior contains the set $ q^\NN=\{1,q,q^2,q^3,...\}$. We consider the function $x\mapsto I(\alpha;q,x)$ defined by the following relation:
$$
I(\alpha;q,x)=\frac{1}{2\pi i}\int_{\cC}\frac{\theta(-\frac\alpha t)}{(\frac1{t};q)_\infty}\,e^{-xt}\frac{dt}{t}\,.
$$
From the analyticity of the function under the integral, one see easily that $I(\alpha;q,x)$ is independent of the choice of the curve $\cC$.

\begin{lemma}\label{lemma:I}
Let $\alpha\in\CC^*$. The  function $x\mapsto I(\alpha;q, x)$ is the unique entire function solution of \eqref{equation:alpha} such that $
y(0)=(\frac q\alpha;q)_\infty$.
\end{lemma}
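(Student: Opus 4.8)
The plan is to establish four things in turn: that $I(\alpha;q,x)$ is entire, that it solves \eqref{equation:alpha}, that $I(\alpha;q,0)=(\frac q\alpha;q)_\infty$, and that these properties characterise it. Entirety is immediate, since $\cC$ is compact and the integrand $\frac{\theta(-\alpha/t)}{(1/t;q)_\infty}\,\frac{e^{-tx}}t$ is jointly analytic in $(t,x)$ for $t\in\cC$ and $x\in\CC$, so differentiation under the integral sign makes $x\mapsto I(\alpha;q,x)$ entire. Uniqueness is equally quick: by Proposition \ref{proposition:Cinfty} there is no nontrivial entire solution of \eqref{equation:alpha} vanishing at the origin, so two entire solutions sharing the same value at $0$ coincide; combined with Proposition \ref{proposition:FCinfinity} this shows that, once the value at the origin is known, $I(\alpha;q,x)$ must equal $(\frac q\alpha;q)_\infty\,F(\mu;q,x)$.

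The core of the argument is that $I$ satisfies \eqref{equation:alpha}. Writing $f(t)=\frac{\theta(-\alpha/t)}{(1/t;q)_\infty}$, I would first check by hand, using the $\theta$-functional equation \eqref{equation:thetafunctionalequation} in the form $\theta(z/q)=q^{-1}z\,\theta(z)$ together with the elementary relation $(\frac1{qt};q)_\infty=(1-\frac1{qt})(\frac1t;q)_\infty$, that $f$ solves the auxiliary $q$-difference equation \eqref{equation:f}, namely $(1-qt)f(qt)=\alpha f(t)$. This is exactly the compatibility condition isolated in \S\ref{subsection:Laplace}, so the Laplace recipe \eqref{equation:Laplace} applies in principle; the genuinely delicate point is the contour condition \eqref{equation:Lq}. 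Taking $\cC$ to be a circle $|t|=\rho$ with $\rho>1$ (legitimate since $I$ is independent of the choice of $\cC$), differentiation under the integral gives $I'(x)=-\frac1{2\pi i}\int_\cC tf(t)e^{-tx}\frac{dt}t$, while the substitution $s=qt$ turns $\alpha I(qx)$ into $\frac1{2\pi i}\int_{q\cC}(1-s)f(s)e^{-sx}\frac{ds}s$ after using $\alpha f(s/q)=(1-s)f(s)$.

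The decisive observation is that the factor $(1-s)$ cancels the $j=0$ zero of $(\frac1s;q)_\infty$, so that $(1-s)f(s)=-s\,\theta(-\alpha/s)/(\frac qs;q)_\infty$ has poles only at $\{q,q^2,q^3,\dots\}$, all of which lie inside the smaller circle $q\cC$; hence the annulus between $q\cC$ and $\cC$ is free of singularities and Cauchy's theorem yields $\int_{q\cC}=\int_\cC$. Splitting $(1-s)f(s)=f(s)-sf(s)$ then gives $\alpha I(qx)=I(x)+I'(x)$, which is precisely \eqref{equation:alpha}. For the value at the origin I would evaluate $I(\alpha;q,0)=\frac1{2\pi i}\int_\cC\frac{\theta(-\alpha/t)}{t\,(1/t;q)_\infty}\,dt$ as a sum of residues at the simple poles $t=q^j$: differentiating $(1/t;q)_\infty$ at $t=q^j$ and simplifying $\theta(-\alpha q^{-j})$ by \eqref{equation:thetafunctionalequation} collapses each residue to $\frac{\alpha^j\,\theta(-\alpha)}{(q;q)_j\,(q;q)_\infty}$, so that $I(\alpha;q,0)=\frac{\theta(-\alpha)}{(q;q)_\infty}\sum_{j\ge0}\frac{\alpha^j}{(q;q)_j}$. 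Euler's identity $\sum_{j\ge0}\frac{\alpha^j}{(q;q)_j}=\frac1{(\alpha;q)_\infty}$ and the Jacobi triple product \eqref{equation:Jacobitriple}, which gives $\theta(-\alpha)=(q;q)_\infty(\alpha;q)_\infty(\frac q\alpha;q)_\infty$, then reduce the right-hand side to $(\frac q\alpha;q)_\infty$.

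The step I expect to cause the most trouble is this passage to the residue series, since $\cC$ encircles infinitely many poles accumulating at $0$ and one must justify that the sum of residues really reproduces the integral. The honest route is to replace $\cC$ by circles $|t|=q^{N-1/2}$ and show that the integral over them tends to $0$ as $N\to\infty$; the required decay, and the convergence of $\sum_{j\ge0}\alpha^j/(q;q)_j$, both hold when $|\alpha|<1$. Because $I(\alpha;q,0)$ is an entire function of $\alpha$ while $(\frac q\alpha;q)_\infty$ is analytic on the connected set $\CC^*$, proving the equality on the open set $\{0<|\alpha|<1\}$ extends it to all $\alpha\in\CC^*$ by the identity theorem, which is how I would dispose of the general case.
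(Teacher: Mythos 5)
Your proposal is correct and follows essentially the same route as the paper: verify that $f(t)=\theta(-\alpha/t)/(\tfrac1t;q)_\infty$ solves the auxiliary $q$-difference equation \eqref{equation:f} so that the Laplace recipe of \S\ref{subsection:Laplace} turns \eqref{equation:alpha} into a contour identity, compute $I(\alpha;q,0)$ by residues at $t=q^j$ combined with Euler's identity \eqref{equation:Euler} and the triple product \eqref{equation:Jacobitriple} for $|\alpha|<1$, extend to $\alpha\in\CC^*$ by analytic continuation, and get uniqueness from Proposition \ref{proposition:Cinfty}. You in fact supply two details the paper leaves implicit --- the annulus argument justifying $\int_{q\cC}=\int_\cC$ and the shrinking circles $|t|=q^{N-1/2}$ legitimising the infinite residue sum --- and the only blemish is the harmless claim that $I(\alpha;q,0)$ is \emph{entire} in $\alpha$ (it is analytic only on $\CC^*$, which is all the identity theorem needs).
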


\begin{proof}
A direct computation shows that $x\mapsto I(\alpha;q,x)$ satisfies the given functional-differential equation \eqref{equation:alpha}. Indeed, let $$ f(t)=\frac{\theta(-\frac\alpha t)}{(\frac1{t};q)_\infty},\quad
h(t)=\frac{\theta(-\frac {qt}\alpha)}{\theta(-qt)}\,.
$$
Thanks to Jacobi's triple product formula \eqref{equation:Jacobitriple}, we find that
$$
f(t)=(q;q)_\infty\,g(t)\,h(t)\,,
$$
where $g(t)$ denotes the function given by \eqref{equation:seriesg} and where $h$ satisfies the $q$-difference equation \eqref{equation:h}. One may also notice that the conditions required for $f$ and $L=\cC$ permit us to transform the functional equation \eqref{equation:alpha} into \eqref{equation:f}.

In order to get the value taken at $x=0$ for $I(\alpha;q,x)$, we may suppose that $\vert\alpha\vert<1$, the general case resulting from a standard analytic continuation argument.

By applying the residues Theorem  to the integral $\displaystyle\int_{\cC}f(t)\frac{dt}{t}$, we find the following relation:
$$
I(\alpha;q,0)=\sum_{n\ge 0}\frac{\theta(-\frac\alpha{ q^n})}{(q^{-n};q)_n(q;q)_\infty}.
$$
From the relation \eqref{equation:thetafunctionalequation} and the fact that
$$
(q^{-m};q)_m=(-1)^m\,(q;q)_m\,q^{-m(m+1)/2}\quad (m\in\NN)
$$
one deduces that
$$
I(\alpha;q,0)=\frac{\theta(-{\alpha})}{(q;q)_\infty}\sum_{n\ge 0}\frac{1}{(q;q)_n}\,(\alpha)^n.
$$
By taking into account the following Euler's identity \cite[p. 490, Corollary 10.2.2~(a)]{AAR}:
\begin{equation}\label{equation:Euler}
\sum_{n\ge 0}\frac{x^n}{(q;q)_n}=\frac{1}{(x;q)_\infty}\quad (\vert x\vert<1)\,,
\end{equation}
one finds finally that
$$
I(\alpha;q,0)=\frac{\theta(-\alpha)}{(q,\alpha;q)_\infty}\,,
$$
which, together with the Jacobi triple  product formula \eqref{equation:Jacobitriple}, allows to complete the proof.
\end{proof}

\begin{proposition}\label{proposition:IF}
The following relation holds for any non-zero complex number $\alpha=q^\mu$:
\begin{equation}\label{equation:IF}
 I(\alpha;q,x)=(\frac{q}\alpha;q)_\infty\,F(\mu;q,x)\,.
\end{equation}
In other words, if $\mu\notin {\NN^*}\oplus\kappa i\ZZ$, then:
\begin{equation}\label{equation:FI}
F(\mu;q,x)=\frac1{(\frac{q}{\alpha};q)_\infty}\,I(\alpha;q,x)\,.
\end{equation}

\end{proposition}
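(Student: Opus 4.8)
The plan is to prove Proposition~\ref{proposition:IF} by verifying that both sides of \eqref{equation:IF} are entire solutions of the same Cauchy problem for \eqref{equation:alpha}, and then appeal to the uniqueness established earlier. By Lemma~\ref{lemma:I}, the function $I(\alpha;q,x)$ is the unique entire solution of \eqref{equation:alpha} with initial value $I(\alpha;q,0)=(\frac{q}{\alpha};q)_\infty$. On the other hand, by Proposition~\ref{proposition:FCinfinity}, $F(\mu;q,x)$ is the unique entire solution of \eqref{equation:alpha} with $F(\mu;q,0)=1$. Since \eqref{equation:alpha} is linear and homogeneous, the function $(\frac{q}{\alpha};q)_\infty\,F(\mu;q,x)$ is again an entire solution, and its value at the origin is exactly $(\frac{q}{\alpha};q)_\infty$. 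Thus $I(\alpha;q,x)$ and $(\frac{q}{\alpha};q)_\infty\,F(\mu;q,x)$ are two entire solutions of \eqref{equation:alpha} sharing the same value at $x=0$.

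First I would invoke Proposition~\ref{proposition:Cinfty} (equivalently its corollary stating that \eqref{equation:alpha} has no nontrivial $\cC^\infty$-solution with $y(0)=0$): the difference $I(\alpha;q,x)-(\frac{q}{\alpha};q)_\infty\,F(\mu;q,x)$ is an entire solution of \eqref{equation:alpha} vanishing at the origin, hence it is identically zero. This immediately yields \eqref{equation:IF}. The whole argument is essentially a one-line uniqueness statement once Lemma~\ref{lemma:I} has been proved, so I would expect no genuine obstacle here; the substance of the work already lies in the residue computation of Lemma~\ref{lemma:I} that pins down $I(\alpha;q,0)$.

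For the second assertion \eqref{equation:FI}, I would simply divide \eqref{equation:IF} by $(\frac{q}{\alpha};q)_\infty$, which requires that this factor be nonzero. The factor $(\frac{q}{\alpha};q)_\infty=\prod_{j\ge 0}(1-q^{1+j}/\alpha)$ vanishes precisely when $q^{1+j}/\alpha=1$ for some $j\ge 0$, i.e.\ when $\alpha=q^{1+j}$ for some $j\in\NN$. Writing $\alpha=q^\mu$, this corresponds to $\mu\in\{1,2,3,\dots\}$ modulo the period $\kappa i$ coming from $q^{\kappa i}=1$, that is $\mu\in\NN^*\oplus\kappa i\ZZ$. Hence, under the stated hypothesis $\mu\notin\NN^*\oplus\kappa i\ZZ$, the factor $(\frac{q}{\alpha};q)_\infty$ is nonzero and the division is legitimate, giving \eqref{equation:FI}. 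The only point deserving a moment of care is the translation of the non-vanishing condition on $(\frac{q}{\alpha};q)_\infty$ into the arithmetic exclusion $\mu\notin\NN^*\oplus\kappa i\ZZ$, using the $\kappa i$-periodicity of $\mu\mapsto q^\mu$.
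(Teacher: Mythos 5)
Your proposal is correct and follows essentially the same route as the paper: the paper's proof is precisely the one-line uniqueness argument combining Lemma \ref{lemma:I} (which pins down $I(\alpha;q,0)=(\frac{q}{\alpha};q)_\infty$) with Proposition \ref{proposition:FCinfinity} (uniqueness of the entire solution with $y(0)=1$). Your additional verification that $(\frac{q}{\alpha};q)_\infty\neq 0$ exactly when $\mu\notin\NN^*\oplus\kappa i\ZZ$ is a correct spelling-out of a point the paper leaves implicit.
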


\begin{proof}
By taking into account of Lemma \ref{lemma:I}, one needs only to notice that the function $x\mapsto F(\mu;q,x)$ is the unique function analytic over $\CC$ that satisfies \eqref{equation:alpha} with the condition initial $F(\mu;q,0)=1$. See Proposition \ref{proposition:FCinfinity}.
\end{proof}

\subsection{Function $I_\nu(\alpha;q,x)$}\label{subsection:Inu}

Let $\nu\in\CC$, $\alpha=q^\mu\in\CC^*$ be such that the following inequality holds:
\begin{equation}\label{equation:conditionnu}
\Re(\nu+\mu)>0\,.
\end{equation}
For any real number $d\in(0,2\pi)$, we define
\begin{equation}\label{equation:Inu}
I_\nu^{[d]}(\alpha;q,x)=\int_0^{\infty e^{ id}}\frac{\theta(-\frac \alpha t)}{(\frac{1}{t};q)_\infty}\,e^{-xt}\,t^\nu\,\frac{dt}{t}\,;
\end{equation}
under the condition \eqref{equation:conditionnu}, the integral of \eqref{equation:Inu} converges for all $x$ belonging to the open sector $S(-d-\frac\pi2,-d+\frac\pi2)$ of $\tilde\CC^*$.
Therefore, by the analytic continuation processus, we get an analytic function defined over the sector $S(-\frac{5\pi}2,\frac{\pi}2)$;
this function will be denoted by $I_\nu(\alpha;q,x)$.

By taking into account of the functional equation \eqref{equation:thetafunctionalequation}, one may remark that the following relation holds  for any integer $k\in\ZZ$:
\begin{equation}\label{equation:Inuk}
I_\nu(q^k\alpha;q,x)=\frac{q^{-k(k-1)/2}}{(-\alpha)^k}\,I_{\nu+k}(\alpha ;q,x)\,.
\end{equation}
In particular, when $\nu=0$, the last formula can be read as follows:
\begin{equation}\label{equation:Inu0}
I_k(\alpha;q,x)=(-\alpha)^k\,{q^{k(k-1)/2}}\,I_{0}(q^k\alpha ;q,x)\,.
\end{equation}

\begin{lemma}\label{lemma:Inu}
The function $x\mapsto I_\nu(\alpha;q,x)$ satisfies the following func\-tional-dif\-fe\-ren\-tial equation:
\begin{equation}\label{equation:Inuequation}
 y'(x)+y(x)-q^\nu\alpha y(qx)=0.
\end{equation}
\end{lemma}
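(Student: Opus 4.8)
The plan is to treat \eqref{equation:Inuequation} as an instance of the Laplace scheme of \S\ref{subsection:Laplace}. Indeed, \eqref{equation:Inuequation} is exactly Eq. \eqref{equation:alpha} with $\alpha$ replaced by $q^\nu\alpha$, so the integrand of the corresponding Laplace integral \eqref{equation:Laplace} should satisfy the analogue of \eqref{equation:f}, namely $(1-qt)\,f(qt)=q^\nu\alpha\,f(t)$. I would therefore first check that the kernel appearing in \eqref{equation:Inu}, that is $f(t)=\dfrac{\theta(-\frac\alpha t)}{(\frac1t;q)_\infty}\,t^\nu$, satisfies precisely this $q$-difference equation.

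For this verification, writing $-\frac\alpha{qt}=q^{-1}\bigl(-\frac\alpha t\bigr)$ and applying the functional equation \eqref{equation:thetafunctionalequation} with $n=-1$ gives $\theta(-\frac\alpha{qt})=-\frac{\alpha}{qt}\,\theta(-\frac\alpha t)$; together with the elementary shift $(\frac1{qt};q)_\infty=(1-\frac1{qt})(\frac1t;q)_\infty$ and $(qt)^\nu=q^\nu t^\nu$, a direct simplification yields
\begin{equation*}
f(qt)=q^\nu\,\frac{\alpha}{1-qt}\,f(t),
\end{equation*}
that is, $(1-qt)\,f(qt)=q^\nu\alpha\,f(t)$, as required.

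It then remains to feed this into the integral. Set $y(x)=I_\nu^{[d]}(\alpha;q,x)$ and fix $x$ in the sector $S(-d-\frac\pi2,-d+\frac\pi2)$ where, thanks to \eqref{equation:conditionnu}, the integral \eqref{equation:Inu} converges. Differentiating under the integral sign gives $y'(x)=-\int_0^{\infty e^{id}}t\,f(t)\,e^{-xt}\,\frac{dt}t$ (note that no integration by parts, hence no boundary term, is involved). Since the ray $[0,\infty e^{id})$ is invariant under $t\mapsto qt$, the substitution $t\mapsto t/q$ turns $y(qx)$ into $\int_0^{\infty e^{id}}f(t/q)\,e^{-xt}\,\frac{dt}t$. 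Collecting the three terms, the integrand of $y'(x)+y(x)-q^\nu\alpha\,y(qx)$ equals $\bigl[(1-t)f(t)-q^\nu\alpha\,f(t/q)\bigr]e^{-xt}\,\frac{dt}t$, which vanishes identically by the kernel relation above (replace $t$ by $t/q$). Hence $y$ solves \eqref{equation:Inuequation} first on this sector and then, by analytic continuation, on all of $S(-\frac{5\pi}2,\frac{\pi}2)$.

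The only delicate point — and the one I expect to require care rather than a new idea — is the legitimacy of the analytic manipulations: differentiation under the integral sign and the change of variable $t\mapsto t/q$ must be justified uniformly on compact subsets of the sector. This reduces to controlling the kernel $f(t)$ at both ends of the contour: the factor $e^{-xt}$ ensures rapid decay as $t\to\infty$ along the ray, while the condition \eqref{equation:conditionnu}, $\Re(\nu+\mu)>0$, is exactly what makes $f(t)\,\frac{dt}t$ integrable near $t=0$, so that all integrals in play converge absolutely and depend analytically on $x$.
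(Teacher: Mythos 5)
Your proof is correct and follows essentially the route the paper intends: it verifies by direct computation (via the $\theta$ functional equation and the shift of $(\tfrac1t;q)_\infty$) that the kernel of \eqref{equation:Inu} satisfies the first-order $q$-difference equation $(1-qt)f(qt)=q^\nu\alpha f(t)$, and then transfers this to \eqref{equation:Inuequation} through the Laplace scheme of \S\ref{subsection:Laplace}, exactly as the paper's one-line proof (referring back to Lemma \ref{lemma:I}) prescribes. The convergence discussion at $t=0$ and $t=\infty$ is also the right justification for the interchange of limits.
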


\begin{proof}
The result may be proved by a direct computation, as done at the beginning of the proof of Lemma \ref{lemma:I}. See also \S \ref{subsection:Laplace}.
\end{proof}

If we take the derivation with respect to $x$ in the integral \eqref{equation:Inu} of $I_\nu(\alpha;q,x)$, we find that for any positive integer $k$, the following identity holds:
$$
\partial^k_xI_\nu(\alpha;q,x)=(-1)^kI_{\nu+k}(\alpha;q,x).
$$
Thus, from \eqref{equation:Inuk} one deduces the following relation:
\begin{equation}\label{equation:Inukderive}
\partial^k_xI_\nu(\alpha;q,x)=\alpha^k\,q^{k(k-1)/2}\,I_\nu(q^k\alpha;q,x),
\end{equation}
which is similar to that satisfied by $F(\mu;q,x)$; see \eqref{equation:qmalpha}.

\subsection{Two special cases for $I_\nu(\alpha;q,x)$} Let us consider two particular cases: (1) $\nu\in\ZZ$; (2) $\alpha\in q^\ZZ$. The first case contains notably the case of $\nu=0$.

\begin{proposition}\label{proposition:InuF}
Let $\nu=k\in\ZZ$, $\alpha=q^\mu\in\CC^*$ to be such that the condition \eqref{equation:conditionnu} is satisfied. Then, the following relation holds for all $x\in \CC^+=S(-\frac\pi2,\frac\pi2)$:
\begin{equation}\label{equation:InuF}
I_k(\alpha;q,e^{-2\pi i}\,x)-I_k(\alpha;q,x)=C_k(\alpha)\, F(\mu+k;q,x)\,,
\end{equation}
where
$$
C_k(\alpha)=2\pi i\,(-\alpha)^k\,(\frac{q^{1-k}}\alpha;q)_\infty\,q^{k(k-1)/2}\,.
$$
\end{proposition}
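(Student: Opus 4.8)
The plan is to reduce the statement to the case $k=0$ by means of the homogeneity relation \eqref{equation:Inu0}, and then to identify the monodromy of $I_0$ around the origin with the closed-contour function $I$ of \S\ref{subsection:I}. First I would apply \eqref{equation:Inu0} with $\beta=q^k\alpha=q^{\mu+k}$ at both points $x$ and $e^{-2\pi i}x$ (both lie in the common domain $S(-\frac{5\pi}2,\frac\pi2)$ when $x\in\CC^+$), and subtract:
\begin{equation*}
I_k(\alpha;q,e^{-2\pi i}x)-I_k(\alpha;q,x)=(-\alpha)^k\,q^{k(k-1)/2}\bigl(I_0(\beta;q,e^{-2\pi i}x)-I_0(\beta;q,x)\bigr).
\end{equation*}
Observe that condition \eqref{equation:conditionnu} for $\nu=k$ reads $\Re(\mu+k)>0$, which is exactly the condition that makes $I_0(\beta;q,\cdot)$ well defined; so everything is reduced to the monodromy of $I_0(\beta;q,\cdot)$ as $x$ turns once clockwise about the origin.

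The heart of the matter is to prove, for $x\in\CC^+$, that
\begin{equation*}
I_0(\beta;q,e^{-2\pi i}x)-I_0(\beta;q,x)=2\pi i\,I(\beta;q,x).
\end{equation*}
Writing $\Phi(t)=\frac{\theta(-\beta/t)}{(1/t;q)_\infty}\,e^{-xt}\,\frac1t$, I would first note that, because $\nu=0\in\ZZ$, no fractional power of $t$ occurs in the integrand, so $\Phi$ is single-valued and meromorphic on $\CC^*$, with simple poles exactly at $t=q^n$, $n\ge0$ (the zeros of $(1/t;q)_\infty$). For $x\in\CC^+$ the integral $I_0(\beta;q,x)$ is represented by a Laplace ray just above the positive real axis; continuing $x$ clockwise by $2\pi$ forces the admissible ray direction to rotate counterclockwise by $2\pi$ the long way round (through the upper, left and lower half-planes, never meeting the positive real axis), so that $I_0(\beta;q,e^{-2\pi i}x)$ is represented by the same $\Phi$ integrated along a ray just below the positive real axis. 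The difference of the lower and the upper ray integrals is then the integral of $\Phi$ over a counterclockwise loop $\cC$ enclosing $q^\NN$, provided the joining arcs vanish: at infinity this holds because $e^{-xt}$ decays in the right half-plane, and at the origin because of the convergence granted by $\Re(\mu+k)>0$. By the very definition of $I$ in \S\ref{subsection:I}, that loop integral equals $2\pi i\,I(\beta;q,x)$.

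Finally I would insert \eqref{equation:IF}, namely $I(\beta;q,x)=(q/\beta;q)_\infty\,F(\mu+k;q,x)$ with $q/\beta=q^{1-k}/\alpha$, to conclude
\begin{equation*}
I_k(\alpha;q,e^{-2\pi i}x)-I_k(\alpha;q,x)=2\pi i\,(-\alpha)^k\,q^{k(k-1)/2}\,(q^{1-k}/\alpha;q)_\infty\,F(\mu+k;q,x),
\end{equation*}
which is precisely $C_k(\alpha)\,F(\mu+k;q,x)$. The main obstacle is the middle step: rigorously matching the analytic continuation $x\mapsto e^{-2\pi i}x$ with the swing of the Laplace ray from above to below the pole line $q^\NN$, and checking that the arcs at $0$ and at $\infty$ contribute nothing. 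The single-valuedness of $\Phi$ — hence the hypothesis $\nu=k\in\ZZ$ — is exactly what makes this swing unambiguous and the residue bookkeeping clean; for non-integer $\nu$ the factor $t^\nu$ would be ramified and the loop $\cC$ could not be identified with a difference of two ray integrals of the same function.
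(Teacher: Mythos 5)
Your proposal is correct and follows essentially the same route as the paper: reduce to $k=0$ via \eqref{equation:Inu0}, identify the monodromy $I_0(\alpha;q,e^{-2\pi i}x)-I_0(\alpha;q,x)$ with $2\pi i\,I(\alpha;q,x)$ by swinging the Laplace ray across the pole line $q^\NN$, and conclude with Proposition \ref{proposition:IF}; you merely supply the contour-deformation and arc-estimate details that the paper leaves implicit. Note that your orientation bookkeeping agrees with the statement \eqref{equation:InuF}, whereas the intermediate identity displayed in the paper's proof has the two arguments $x$ and $xe^{-2\pi i}$ interchanged, which appears to be a sign typo there rather than a defect in your argument.
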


\begin{proof}
Notice that when $x\in\CC^+$, both $x$ and $xe^{-2\pi i}$ belong to $S(-\frac{5\pi}2,\frac{\pi}2)$, so the left hand side of \eqref{equation:InuF} is well-defined on $\CC^+$.
By using  the relation \eqref{equation:Inu0}, one can only consider the case of $k=0$. Since
\begin{equation*}\label{equation:Inu0F}
I_0(\alpha;q,x)-I_0(\alpha;q,x e^{-2\pi i})=2\pi i\,I(\alpha;q,x)\,,
\end{equation*}
one completes the proof with the help of Proposition \ref{proposition:IF}.
\end{proof}

\begin{proposition}\label{proposition:alphalambda}
Let $m\in\ZZ$ and $\nu\in\CC$. If $\Re(\nu)+m>0$, then the following relation holds in the sector $ S(-\frac{5\pi}2,\frac{\pi}2)$:
\begin{equation}\label{equation:alphalambda}
I_\nu(q^{m};q,x)=K_\nu(m)\,\bigl(\frac1x\bigr)^{m+\nu}\,G(m+\nu;q,\frac{1}{x})\,,
\end{equation}
where
$$
K_\nu(m)=(-1)^m\,(q;q)_\infty\, q^{-m(m-1)/2}\,{\Gamma(m+\nu)}\,.
$$
\end{proposition}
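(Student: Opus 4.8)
The plan is to reduce the assertion to the base case $\alpha=1$ via the shift relation already established, and then to evaluate the resulting integral explicitly by expanding the theta-quotient as a power series and integrating term by term against the Gamma kernel. First I would apply \eqref{equation:Inuk} with $\alpha=1$ and $k=m$, which gives
$$
I_\nu(q^m;q,x)=(-1)^m\,q^{-m(m-1)/2}\,I_{\nu+m}(1;q,x),
$$
using $1/(-1)^m=(-1)^m$. Writing $\sigma=\nu+m$, so that $\Re(\sigma)>0$ by the hypothesis $\Re(\nu)+m>0$, it then suffices to prove the base-case identity
$$
I_\sigma(1;q,x)=(q;q)_\infty\,\Gamma(\sigma)\,x^{-\sigma}\,G(\sigma;q,\tfrac1x),
$$
since substituting this back through the displayed shift relation reproduces exactly the constant $K_\nu(m)$.

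For the base case the decisive simplification comes from Jacobi's triple product formula \eqref{equation:Jacobitriple}: with $\alpha=1$ one has $\theta(-\tfrac1t)=(q;q)_\infty\,(\tfrac1t;q)_\infty\,(qt;q)_\infty$, so the theta-quotient appearing in \eqref{equation:Inu} collapses to $\theta(-\tfrac1t)/(\tfrac1t;q)_\infty=(q;q)_\infty\,(qt;q)_\infty$. I would then insert Euler's expansion \eqref{equation:seriesg} of $(qt;q)_\infty$ and integrate the resulting series term by term along the ray $\arg t=d$. Each term is a rotated Gamma integral: for $x$ in the subsector $S(-d-\tfrac\pi2,-d+\tfrac\pi2)$ one has $\int_0^{\infty e^{id}}t^{s-1}e^{-xt}\,dt=\Gamma(s)/x^s$ with $s=n+\sigma$, which is legitimate because $\Re(\sigma)>0$ secures convergence at $t=0$ and the direction condition secures decay at infinity. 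Using $\Gamma(n+\sigma)=\Gamma(\sigma)(\sigma)_n$ and comparing with the definition \eqref{equation:G} of $G$ then yields the displayed base-case identity.

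The main technical point will be justifying the term-by-term integration and then fixing the domain of validity. Since the coefficients $q^{n(n+1)/2}/(q;q)_n$ of Euler's series decay like $q^{n^2/2}$, the series $\sum_{n\ge0}\frac{q^{n(n+1)/2}}{(q;q)_n}|t|^n$ is an entire majorant of order zero, and against the factor $|t|^{\Re(\sigma)-1}e^{-\Re(xt)}$ (with $\Re(xt)>0$ along the ray) it is absolutely integrable; hence dominated convergence permits the interchange. This argument only delivers the identity on the subsector attached to one admissible direction $d$. However, the right-hand side $x^{-\sigma}G(\sigma;q,\tfrac1x)$ is analytic on all of $\tilde\CC^*$, while $I_\sigma(1;q,x)$ has already been continued to $S(-\tfrac{5\pi}2,\tfrac\pi2)$ in \S\ref{subsection:Inu}; therefore the identity propagates to the full sector by analytic continuation, completing the proof.
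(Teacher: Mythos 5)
Your proposal is correct and follows essentially the same route as the paper: reduce to the case $m=0$ via the shift relation \eqref{equation:Inuk} with $\alpha=1$, collapse the theta-quotient to $(q;q)_\infty\,(qt;q)_\infty$ by the Jacobi triple product, and integrate Euler's expansion term by term against the Gamma kernel. The only difference is that you spell out the dominated-convergence justification and the analytic continuation to the full sector, which the paper leaves to a one-line appeal to Fubini.
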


\begin{proof}
Putting $\alpha=1$ and $k=m$ in \eqref{equation:Inuk} implies that
$$
I_\nu(q^m;q,x)=(-1)^mq^{-m(m-1)/2}I_{\nu+m}(1;q,x),
$$
so that one needs only to prove \eqref{equation:alphalambda} with $m=0$ and $K_\nu(0)=(q;q)_\infty\,\Gamma(\nu)$. Thus we shall suppose that $m=0$ in the statement of Proposition \ref{proposition:alphalambda}.

By making use of the triple product formula \eqref{equation:Jacobitriple} and the Euler's formula \eqref{equation:Euler}, we may write
$$
\frac{\theta(-\frac{1} t)}{(\frac 1 t;q)_\infty}=(q;q)_\infty\,\sum_{n\ge 0}\frac{q^{n(n+1)/2}}{(q;q)_n}\,(- t)^{n}\,.
$$
With the help of Fubini Theorem and the Euler's Gamma function, one may obtain that
$$
\int_0^\infty \frac{\theta(-\frac{1} t)}{(\frac 1 t;q)_\infty}\,e^{-tx}\,t^\nu\,\frac{dt}t= (q;q)_\infty\,\sum_{n\ge 0}(-1)^{n}\frac{\Gamma(\nu+n)}{(q;q)_n}\,q^{n(n+1)/2}\,{x}^{-n-\nu}\,.
$$
The proof is thus completed.
\end{proof}

\begin{remark}\label{remark:Gintegral}\rm
Putting $m=0$ in Proposition \ref{proposition:alphalambda} yields the following integral representation: \begin{equation}\label{equation:Gintegral}
G(\nu;q,\frac1x)=\frac{x^\nu}{\Gamma(\nu)}\int_0^{e^{id}\infty}( qt;q)_\infty\,e^{-xt}\,t^\nu\frac{dt}t\,,
\end{equation}
where $d\in\RR$, $\Re(\nu)>0$ and $x\in S(-\frac\pi2-d,\frac\pi2+d)$.
\end{remark}

In order to get the asymptotic expansion of $I_\nu(\alpha;q,x)$ as $x\to\infty$, we will make use of the relation \eqref{equation:2theta}, which is reduced from the $\theta$-modular formula \eqref{equation:thetamodular}.

\section{End of the Proof of Theorems \ref{theorem:FG} and \ref{theorem:FPsi}}\label{section:proof}

For any $\delta\in\RR$ and $q\in(0,1)$, we denote by $D_{\delta,q}$ or simply $D_\delta$ the following annulus:
\begin{equation}\label{equation:Ddelta}
D_{\delta,q}=D_{\delta}:=\{z\in\tilde\CC^*: e^{-3\kappa\pi/2}<\vert z e^{\delta}\vert<e^{3\kappa\pi/2}\},
\end{equation}
where $\kappa=\kappa_q$.

For any $u\in\CC^+$, $\mu\in\CC\setminus\ZZ$, consider the function $z\mapsto \Phi(u,\mu;q,z)$ given by the following relation:
\begin{equation}\label{equation:Phi}
\Phi(u,\mu;q,z)=z^{-\frac{\mu i}{\kappa}}\,\sum_{\ell\in\ZZ}\frac{\Gamma(u+\mu+\kappa i\ell)}{1-e^{2\pi i(\mu+\kappa i\ell)}}\,z^\ell\,.
\end{equation}
Since
$$
\Gamma(u+\mu\pm \kappa i\ell)=O(\ell^{u+\mu-\frac12}\,e^{-\frac{\kappa \pi}2\ell})
$$
for $\ell\to+\infty$, the Laurent series of \eqref{equation:Phi} converges in the domain $D_{\kappa\pi}$. Consequently,  $\Phi(u,\mu;q,z)$ represents an analytic function on the annulus $D_{\kappa\pi}$ of $\tilde\CC^*$.

Consider the function $\Psi(u,v,x)$ given in \eqref{equation:Psi}, which is related with $\Phi(u,\mu;q,z)$  in the following manner.

\begin{proposition}\label{proposition:PhiPsi} The following relation holds for all $z\in D_{-\kappa\pi}\cap D_{\kappa\pi}$:
\begin{equation}\label{equation:PhiPsi}
\Phi(u,\mu;q,z)-\Phi(u,\mu;q,z\,e^{-2\kappa\pi})=z^{-\frac{\pi i}{\kappa}}\,\Psi(u+\mu,\frac{\kappa\pi}{2},z)\,.
\end{equation}

\end{proposition}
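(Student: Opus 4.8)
The plan is to prove the identity by a direct, term-by-term computation on the common annulus of convergence, using the observation that the substitution $z\mapsto z\,e^{-2\kappa\pi}$ multiplies the $\ell$-th monomial of \eqref{equation:Phi} by exactly the quantity appearing in its denominator. First I would pin down the domain. On the overlap $D_{-\kappa\pi}\cap D_{\kappa\pi}$ one has $e^{-\kappa\pi/2}<|z|<e^{\kappa\pi/2}$, which is precisely the annulus $\cC_\nu$ with $v=\frac{\kappa\pi}2$ on which $\Psi(u+\mu,\frac{\kappa\pi}2,z)$ of \eqref{equation:Psi} converges; moreover for such $z$ the shifted point $z\,e^{-2\kappa\pi}$ again lies in $D_{\kappa\pi}$. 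Hence both $\Phi(u,\mu;q,z)$ and $\Phi(u,\mu;q,z\,e^{-2\kappa\pi})$ are represented there by absolutely convergent Laurent series in $z$, which legitimizes subtracting them coefficient by coefficient.

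Next I would carry out the substitution explicitly. Since the multiplier $e^{-2\kappa\pi}$ is a positive real, only the modulus changes and $\Log(z\,e^{-2\kappa\pi})=\Log z-2\kappa\pi$; thus the prefactor transforms as $z^{-\mu i/\kappa}\mapsto z^{-\mu i/\kappa}\,e^{2\pi\mu i}$, while the monomial transforms as $z^\ell\mapsto z^\ell\,e^{-2\kappa\pi\ell}$. Because $e^{-2\kappa\pi\ell}=e^{2\pi i(\kappa i\ell)}$, these two factors combine into $e^{2\pi i(\mu+\kappa i\ell)}$, i.e. exactly the number subtracted from $1$ in the $\ell$-th denominator of \eqref{equation:Phi}.

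The decisive step is then the subtraction. For each $\ell$ the coefficient of $z^{-\mu i/\kappa}z^\ell$ in $\Phi(u,\mu;q,z)-\Phi(u,\mu;q,z\,e^{-2\kappa\pi})$ equals
\begin{equation*}
\frac{\Gamma(u+\mu+\kappa i\ell)}{1-e^{2\pi i(\mu+\kappa i\ell)}}\,\bigl(1-e^{2\pi i(\mu+\kappa i\ell)}\bigr)=\Gamma(u+\mu+\kappa i\ell),
\end{equation*}
so the denominators cancel and the difference reduces to $z^{-\mu i/\kappa}\sum_{\ell\in\ZZ}\Gamma(u+\mu+\kappa i\ell)\,z^\ell$. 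Recognizing $\sum_{\ell\in\ZZ}\Gamma(u+\mu+\kappa i\ell)\,z^\ell=\Psi(u+\mu,\frac{\kappa\pi}2,z)$ directly from \eqref{equation:Psi} (with $\frac{2iv}{\pi}=\kappa i$ when $v=\frac{\kappa\pi}2$) completes the identification of the right-hand side.

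I do not expect a genuine obstacle: the algebra telescopes in a single line, and the only care needed is the bookkeeping of the exponential factors together with the verification that all three Laurent series converge simultaneously on $D_{-\kappa\pi}\cap D_{\kappa\pi}$, so that the termwise subtraction is valid. The one point I would double-check is the surviving prefactor, which the cancellation leaves as $z^{-\mu i/\kappa}$ in front of $\Psi$; this is the factor one wants, since under the specialization $z=x^*=x^{-\kappa i}$ used in Theorem \ref{theorem:FPsi} it becomes $x^{-\mu}$, matching the factor $(1/x)^\mu$ appearing there.
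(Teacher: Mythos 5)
Your computation is correct and is exactly the ``direct computation from the definition of $\Psi$'' that the paper's one-line proof invokes: the substitution $z\mapsto z\,e^{-2\kappa\pi}$ multiplies the $\ell$-th term by $e^{2\pi i(\mu+\kappa i\ell)}$, the denominators telescope, and the domain bookkeeping matches. Your surviving prefactor $z^{-\mu i/\kappa}$ is indeed the right one (the printed $z^{-\pi i/\kappa}$ in \eqref{equation:PhiPsi} is a typo for $z^{-\mu i/\kappa}$, as your check against the factor $(1/x)^\mu$ in Theorem \ref{theorem:FPsi} confirms).
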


\begin{proof}
It follows form a direct computation, by making use of the definition  \eqref{equation:Psi}  of $\Psi$.
\end{proof}

By considering the fact that $\Psi(u,v,x)$ has a natural boundary (see \cite{Zh4}), one may notice that $\Phi(u,\mu;q,z)$ can not be analytically continued beyond the boundaries of his convergence ring $D_{\kappa\pi}$ in the Riemann surface $\tilde\CC^*$.

\subsection{Expand $I_\nu(\alpha;q,x)$ by means of $\Phi(u,\mu;q,z)$}

Let $\alpha=q^\mu$, with $\mu\in\CC$; we will consider the bahaviour of $I_\nu(q^\mu;q,x)$ as $x\to \infty$. The main result of this section is the following

\begin{theorem}\label{theorem:InuPhi}
Let $\mu\in\CC$ and $\nu\in\CC$ be such that $\Re(\mu+\nu)>0$; let $\Phi(u,\mu;q,z)$ be the function given in \eqref{equation:Phi}. If $q^\mu\notin q^\ZZ$, then the following relation holds for all $x$ belonging to the open sector $S(-\frac{5\pi}2,\frac{\pi}2)$:
\begin{equation}\label{equation:InuPhi}
I_\nu(q^\mu;q,x)=C(q,\mu)\,\bigl(\frac1x\bigr)^{\nu}\sum_{n\ge 0}\frac{q^{n(n+1)/2}}{(q;q)_n}\,\Phi(n+\nu,\mu;q,{x^*})\,\bigl(-\frac1x\bigr)^n\,,
\end{equation}
where $C(q,\mu)$ denotes a constant given by means of $C(q,m,\mu)$ of Theorem \ref{theorem:Fouriercharacter} in the following manner:
$$
C(q,\mu)=\frac{\kappa\,(q^\mu,q^{1-\mu};q)_\infty}{i\,(q;q)_\infty}\,.
$$
\end{theorem}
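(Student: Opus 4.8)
The plan is to feed the Fourier expansion of the character function from Theorem \ref{theorem:Fouriercharacter} directly into the defining Laplace integral \eqref{equation:Inu} and then integrate term by term, each monomial in $t$ producing an Euler Gamma value. First I would fix a direction $d\in(0,2\pi)$ and work along the ray $\arg t=d$, so that $x$ ranges over the sector $S(-d-\frac\pi2,-d+\frac\pi2)$ where \eqref{equation:Inu} converges; letting $d$ vary over $(0,2\pi)$ sweeps out the whole sector $S(-\frac{5\pi}2,\frac\pi2)$. On this ray I would factor the integrand as
\begin{equation*}
\frac{\theta(-\frac{q^\mu}t)}{(\frac1t;q)_\infty}=\frac{\theta(-\frac{q^\mu}t)}{\theta(-\frac1t)}\cdot\frac{\theta(-\frac1t)}{(\frac1t;q)_\infty},
\end{equation*}
where the second factor equals $(q;q)_\infty\,(qt;q)_\infty=(q;q)_\infty\sum_{n\ge0}\frac{q^{n(n+1)/2}}{(q;q)_n}(-t)^n$ by \eqref{equation:Jacobitriple} and \eqref{equation:seriesg}, exactly as in the proof of Proposition \ref{proposition:alphalambda}.

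Next I would apply Theorem \ref{theorem:Fouriercharacter} to the first factor, with $x$ there replaced by $\frac1t$. The decisive observation is that, since $\arg\frac1t=-d\in(-2\pi,0)$ stays constant along the ray (and the ray avoids the poles at $t\in q^\ZZ$ on the positive axis), the correct index is $m=1$, which makes the weight $e^{2\pi(m-1)\kappa\ell}$ collapse to $1$ and gives
\begin{equation*}
\frac{\theta(-\frac{q^\mu}t)}{\theta(-\frac1t)}=C(q,1,\mu)\,t^{\mu}\sum_{\ell\in\ZZ}\frac{t^{\kappa i\ell}}{1-e^{2\pi i(\mu+\kappa i\ell)}}.
\end{equation*}
Substituting both expansions and integrating each monomial $t^{\mu+\nu+n+\kappa i\ell}$ against $e^{-xt}\frac{dt}t$ yields $\Gamma(\mu+\nu+n+\kappa i\ell)\,x^{-(\mu+\nu+n+\kappa i\ell)}$; the hypothesis $\Re(\mu+\nu)>0$ guarantees $\Re(\mu+\nu+n+\kappa i\ell)>0$, so the rotated Gamma integral is legitimate. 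Using $x^{-\kappa i\ell}=(x^*)^\ell$ and $x^{-\mu}=(x^*)^{-\mu i/\kappa}$, the inner $\ell$-sum reassembles precisely into $\Phi(n+\nu,\mu;q,x^*)$ from \eqref{equation:Phi}, while the prefactor $(q;q)_\infty\,C(q,1,\mu)$ simplifies to $C(q,\mu)$ because $e^{2(1-m)\pi i\mu}=1$ at $m=1$ and $(q,q;q)_\infty=(q;q)_\infty^2$.

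The main obstacle will be justifying the term-by-term integration, i.e.\ interchanging the double summation over $n$ and $\ell$ with the $t$-integral. I would settle this by a Fubini argument resting on two uniform decay estimates: along $\arg t=d\in(0,2\pi)$ one has $|t^{\kappa i\ell}|=e^{-\kappa\ell d}$, which together with the Fourier coefficient $(1-e^{2\pi i\mu}e^{-2\pi\kappa\ell})^{-1}$ forces geometric decay as $\ell\to\pm\infty$ (the bound for $\ell\to+\infty$ uses $d>0$, the bound for $\ell\to-\infty$ uses $d<2\pi$), while the factor $q^{n(n+1)/2}/(q;q)_n$ in the Euler series gives super-exponential decay in $n$; together they dominate the integrable behaviour of $e^{-xt}t^{\Re(\mu+\nu)}$ and make the interchange valid. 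Finally, since the resulting double series converges exactly when $x^*\in D_{\kappa\pi}$, that is when $x\in S(-\frac{5\pi}2,\frac\pi2)$ (by the Stirling estimate recorded just after \eqref{equation:Phi}), both sides are analytic on that sector, and the identity proved on the subsector extends to all of $S(-\frac{5\pi}2,\frac\pi2)$ by analytic continuation.
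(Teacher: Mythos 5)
Your proposal is correct and follows essentially the same route as the paper: factor the integrand via the Jacobi triple product into $(q;q)_\infty\,(qt;q)_\infty\cdot\frac{\theta(-q^\mu/t)}{\theta(-1/t)}$, expand the character function by Theorem \ref{theorem:Fouriercharacter} with $m=1$ (since $\frac1t\in S(-2\pi,0)$), integrate termwise against $e^{-xt}t^{\nu}\frac{dt}{t}$ to produce the Gamma values, and reassemble the $\ell$-sum into $\Phi(n+\nu,\mu;q,x^*)$. The paper justifies the interchange by normal convergence of the double series on compacts of $S(0,2\pi)$ plus dominated convergence, which matches the Fubini argument you sketch.
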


\begin{proof}
Consider the integral \eqref{equation:Inu} of $I_\nu(\alpha;q,x)$ and write
$$
\frac{\theta(-\frac{q^\mu} t)}{(\frac1t;q)_\infty}=(q;q)_\infty\,g(t)\,h(t),
$$
where, as in the proof of Lemma \ref{lemma:I} ($\alpha=q^\mu$), we set
$$
g(t)=(qt;q)_\infty,\quad
h(t)=\frac{\theta(-\frac{q^\mu} t)}{\theta(-\frac1t)}\,.
$$
We apply Theorem \ref{theorem:Fouriercharacter} to expand $h(t)$ into a Fourier series for $t\in S(0,2\pi)$: since $\frac1t\in S(-2\pi,0)$, putting $m=1$ in \eqref{equation:Fouriercharacter} allows us to obtain the following expression:
$$
h(t)=\frac{\kappa\,(q^\mu,q^{1-\mu};q)_\infty}{i\,(q,q;q)_\infty}\,\sum_{\ell\in\ZZ}\frac{t^{\mu+\kappa i\ell}}{1-e^{2\pi i(\mu+\kappa i\ell)}}\,,
$$
where $q^*$ was replaced by $e^{-2\pi \kappa}$ and $\kappa=\kappa_q=-\frac{2\pi}{\ln q}$.

Therefore, from the Euler's relation \eqref{equation:seriesg} it follows that
\begin{equation}\label{equation:Fouriert}
\frac{\theta(-\frac{q^\mu} t)}{(\frac1t;q)_\infty}=C(q,\mu)\,\sum_{n\ge0}\sum_{\ell\in\ZZ}\frac{(-1)^nq^{n(n+1)/2}}{(q;q)_n}\,\frac{t^{n+\mu+\kappa i\ell}}{1-e^{2\pi i(\mu+\kappa i\ell)}}\,,
\end{equation}
where
$$C(q,\mu)=\frac{\kappa\,(q^\mu,q^{1-\mu};q)_\infty}{i\,(q;q)_\infty}\,.$$
In \eqref{equation:Fouriert}, the double series indexed by $n$ and $\ell$ is normally convergent on any compact of $S(0,2\pi)$. At the same time, in view of the relation $x^*=e^{-\kappa i}$, one may notice that
$$
\int_0^\infty t^{n+\mu+\nu+\kappa i\ell}\,e^{-tx}\,\frac{dt}t=\Gamma({n+\mu+\nu+\kappa i\ell})\,\bigl({x^*}\bigr)^{\ell-\frac{\mu i}\kappa}\,\bigl(\frac1x\bigr)^{n+\nu}\,.
$$
 Hence, if one considers the expansion \eqref{equation:Fouriert} in the integral \eqref{equation:Inu} and makes use of the termwise integration for each $e^{-tx}\,t^\gamma$, one can obtain finally the formula \eqref{equation:InuPhi}, according to the Lebesgue's dominated convergence Theorem. The proof of Theorem \ref{theorem:InuPhi} is thus completed.
\end{proof}

Theorem \ref{theorem:InuPhi} states a remarkable fact in relation with the asymptotic bahaviour at infinity of the function $I_\nu(q^\mu;q,x)$: \emph{it can be expanded as a power series of $\frac1x$ having $q$-periodic functions as coefficients}. This phenomenon will also occur for $F(\alpha;q,x)$ and other functions.

\subsection{End of the Proof of Theorem \ref{theorem:FG} and \ref{theorem:FPsi}}\label{subsection:proofFG} The assertion (1) of Theorem \ref{theorem:FG} can be easily checked; see Remark \ref{remark:seriesFG}.
The relations \eqref{equation:FG} and \eqref{equation:FPsi} can be obtained directly one from other, so
we shall make use of Proposition \ref{proposition:InuF} and of Theorem \ref{theorem:InuPhi} to conclude only the proof of Theorem \ref{theorem:FPsi}.

Let $x\in S(-\frac\pi2,\frac\pi2)$ and let $\mu\in\CC^+$ such that $q^\mu\notin q^\ZZ$. In view of the relation $(xe^{-2\pi i})^*=x^*\,q^*=x^*\,e^{-2\pi\kappa}$, putting together the formulas  \eqref{equation:PhiPsi} and \eqref{equation:InuPhi} implies the following identity:
\begin{equation*}\label{equation:Inu0Phi}
I_0(q^\mu;q,x)-I_0(q^\mu;q,xe^{-2\pi i})=C(q,\mu)\,\bigl(\frac1x\bigr)^\mu\sum_{n\ge 0}\frac{q^{n(n+1)/2}}{(q;q)_n}\,\Psi(n+\mu,\frac{\kappa\pi}{2},{x^*})\,\bigl(-\frac1x\bigr)^n\,,
\end{equation*}
where $C(q,\mu)$ is the constant defined in Theorem \ref{theorem:InuPhi}

By letting $k=0$ and $\alpha=q^\mu$ in the relation \eqref{equation:InuF}, one finds finally that
$$
F(q^\mu;q,x)=C_0(q,\mu)\,\bigl(\frac1x\bigr)^\mu\sum_{n\ge 0}\frac{q^{n(n+1)/2}}{(q;q)_n}\,\Psi(n+\mu,\frac{\kappa\pi}{2},{x^*})\,\bigl(-\frac1x\bigr)^n\,,
$$
where
$$
C_0(q,\mu)=-\frac{C(q,mu)}{C_0(q^\mu)}=\frac{\kappa\,(q^\mu;q)_\infty}{2\pi\, (q;q)_\infty}\,.
$$

Remark that one can remove the restriction $\Re(\mu)>0$ from the above-done analysis (see \eqref{equation:qalpha}), by reasoning with a standard analytic continuation processus. Thus the proof of Theorem \ref{theorem:FPsi}, and therefore that of Theorem \ref{theorem:FG}, are achieved.\hfill $\Box$

\subsection{Revisit one Theorem due to Kato and McLeod}\label{subsection:KM}
By Theorem \ref{equation:FG}, one can given more precision to the following result, that constitutes probably one of the most important steps for the investigations of the asymptotic behaviour of solutions of the functional-differential equation \eqref{equation:ab}.

\begin{theorem}[Theorem 3, \cite{KM}]\label{theorem:KM} Consider the boundary problem associated with equation \eqref{equation:ab} for $0\le x<\infty$, and the boundary condition  $y(0)=1$, and suppose that $0<q<1$, $a\in\CC^*$ and $b<0$. Let $\mu$ to be a complex number such that $q^{\mu}=-a/b$. Then the following assertions hold.
\begin{enumerate}
 \item There exists no solution $y(x)$ such that $y(x)=o(x^{-\Re(\mu)})$ as $x\to\infty$.
 \item Every solution $y(x)$ is $O(x^{-\Re(\mu)})$ at the infinity and may be written as follows:
\begin{equation}\label{equation:introKM}
y(x)=x^{-\mu}\bigl\{\sum_{n=0}^\infty\frac{q^{n(n+1)/2}}{(q;q)_n}\,g_n(\log x)\,(-\frac1{bx})^n\bigr\}\,,
\end{equation}
where $g_0=g$ denotes some ${\mathcal C}^\infty(\RR;\CC)$-periodic function of period $|\log q|$ verifying
\begin{equation}\label{equation:introKMg}
|g^{(n)}(s)|\le K^nq^{-n^2/2}, \quad
\forall n\in\NN
\end{equation}
for some constant $K>0$, and where all the functions $g_n$ are recursively given by the following relation:
\begin{equation}\label{equation:introKMgn}
g_{n+1}=g_n'-(\mu+n)g_n\,.
\end{equation}
\end{enumerate}

\end{theorem}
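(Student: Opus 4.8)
The plan is to obtain Theorem \ref{theorem:KM} as a corollary of Theorem \ref{theorem:FPsi}, after reducing the pantograph equation \eqref{equation:ab} to its normalized form \eqref{equation:alpha}. First I would introduce $u(x)=y(-x/b)$; since $b<0$ the factor $-1/b$ is positive, and a one-line computation turns \eqref{equation:ab} into $u'(x)=(-a/b)\,u(qx)-u(x)$, that is \eqref{equation:alpha} with $\alpha=q^\mu=-a/b$, together with $u(0)=y(0)=1$. By Proposition \ref{proposition:FCinfinity} the Cauchy problem for \eqref{equation:alpha} with value $1$ at the origin has a unique entire solution $F(\mu;q,\cdot)$, so the boundary problem for \eqref{equation:ab} admits the single solution $y(x)=u(-bx)=F(\mu;q,-bx)$, where $-bx>0$ whenever $x>0$. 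Thus the phrase \emph{every solution} refers to this one function, and the whole statement amounts to describing $F(\mu;q,-bx)$ as $x\to+\infty$.

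The second step is to substitute $X=-bx$ into the convergent expansion \eqref{equation:FPsi}; this is legitimate because $X$ runs over the positive real axis, hence inside $\CC^+$ where \eqref{equation:FPsi} holds. Writing $(-b)^{-\mu}$, $(-b)^{-\kappa i}$, and $-1/X=1/(bx)=-(-1/(bx))$, and collecting factors, one reads off precisely the shape \eqref{equation:introKM}, with
\[
g_n(\log x)=\frac{\kappa\,(q^\mu;q)_\infty}{2\pi\,(q;q)_\infty}\,(-b)^{-\mu}(-1)^n\,\Psi\bigl(\mu+n,\tfrac{\kappa\pi}{2},(-b)^{-\kappa i}x^*\bigr).
\]
Since $x^*=x^{-\kappa i}=e^{-i\kappa\log x}$ for $x>0$, each $g_n$ depends on $s=\log x$ only through $z=c\,e^{-i\kappa s}$ with $c=(-b)^{-\kappa i}$ of modulus $1$. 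Because $\kappa\,|\log q|=2\pi$, one has $e^{-i\kappa(s+|\log q|)}=e^{-i\kappa s}$, so every $g_n$ is periodic of period $|\log q|$, as required.

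To verify the recursion \eqref{equation:introKMgn} I would use the defining series \eqref{equation:Psi}, namely $\Psi(\mu+n,\frac{\kappa\pi}{2},z)=\sum_{m\in\ZZ}\Gamma(\mu+n+i\kappa m)z^m$, together with the fact that on such Laurent series the operator $\frac{d}{ds}$ acts as $-i\kappa\,z\frac{d}{dz}$. Hence $g_n'-(\mu+n)g_n$ multiplies the $m$-th coefficient by $-(\mu+n+i\kappa m)$, and the functional equation $w\,\Gamma(w)=\Gamma(w+1)$ converts $(\mu+n+i\kappa m)\,\Gamma(\mu+n+i\kappa m)$ into $\Gamma(\mu+n+1+i\kappa m)$. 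Thus $g_n'-(\mu+n)g_n=-C_n\,\Psi(\mu+n+1,\frac{\kappa\pi}{2},z)=g_{n+1}$, the sign being supplied by the prefactor $C_n\propto(-1)^n$. Equivalently, \eqref{equation:introKMgn} may be checked by inserting the ansatz \eqref{equation:introKM} into \eqref{equation:ab} and using $\alpha=-a/b$ together with the periodicity of the $g_n$.

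For the bound \eqref{equation:introKMg} I would estimate the derivatives of $g=g_0$ termwise: differentiating $n$ times brings down $(-i\kappa m)^n$, while Stirling's formula gives $|\Gamma(\mu+i\kappa m)|\lesssim(\kappa|m|)^{\Re\mu-1/2}e^{-\pi\kappa|m|/2}$, so $|g^{(n)}(s)|\lesssim\sum_{m}(\kappa|m|)^n e^{-\pi\kappa|m|/2}|m|^{\Re\mu-1/2}$. Comparing this sum with $\int_0^\infty t^n e^{-\pi t/2}\,dt\sim\Gamma(n+1)$ shows the right-hand side grows only like $n^n$, which is dominated by $q^{-n^2/2}=e^{\pi n^2/\kappa}$, yielding \eqref{equation:introKMg} for a suitable $K$; this Stirling estimate is the one genuinely analytic point and I expect it to be the main (if routine) obstacle. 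Finally, for claim (1) I would observe that $g_0\not\equiv0$: under the standing hypothesis $\mu\notin\ZZ_{\le0}\oplus\kappa i\ZZ$ (equivalently $(q^\mu;q)_\infty\neq0$, see Remark \ref{remark:Fpolynomial}) no argument $\mu+i\kappa m$ is a pole of $\Gamma$, and since $\Gamma$ has no zeros every coefficient $\Gamma(\mu+i\kappa m)$ of $g_0$ is nonzero. Being a nonzero continuous periodic function, $g_0$ stays bounded away from $0$ along a sequence $x\to\infty$, so $x^{\Re\mu}|y(x)|=|g_0(\log x)|+O(1/x)$ cannot tend to $0$; this proves (1) and at the same time gives the bound $y(x)=O(x^{-\Re\mu})$ of (2). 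The degenerate case $\mu\in\ZZ_{\le0}\oplus\kappa i\ZZ$ is treated directly via the polynomial identity of Remark \ref{remark:FG}.
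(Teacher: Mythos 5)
Your proposal is correct and follows essentially the same route as the paper: reduce \eqref{equation:ab} to \eqref{equation:alpha} by rescaling the variable, invoke the uniqueness from Proposition \ref{proposition:FCinfinity} to identify the solution with $F(\mu;q,\cdot)$, and read off the functions $g_n$ from the expansion \eqref{equation:FPsi}, with the recursion \eqref{equation:introKMgn} and the bound \eqref{equation:introKMg} extracted from the Laurent-series definition \eqref{equation:Psi} of $\Psi$ via $\Gamma(w+1)=w\Gamma(w)$ and Stirling. You merely supply the routine verifications (periodicity, the $d/ds=-i\kappa z\,d/dz$ computation, the non-vanishing of $g_0$ for assertion (1), and the degenerate case of Remark \ref{remark:FG}) that the paper leaves to the reader, and you are in fact slightly more careful than the text about the rescaling constant $-b$ entering the argument of $\Psi$.
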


Indeed, if one writes $f(x)=y(-bx)$, then $f$ will satisfies the boundary problem about the equation \eqref{equation:alpha} with $\alpha=-a/b$, $0\le x<\infty$ and $f(0)=1$. By Proposition \ref{proposition:FCinfinity}, the function $f$ is {\it unique} and is necessarily represented by $F(\mu;q,x)$. Let $s=\log x$ and write $x^*=e^{-i\kappa s}$ in relation \eqref{equation:FPsi} of Theorem \ref{theorem:FG}; one finds that the functions $g_n$, $n\ge 0$, appeared in  \eqref{equation:introKM} can be  defined as follows:
$$
g_n(s)=(-1)^n\,\frac{\kappa\,(q^\mu;q)_\infty}{2\pi\, (q;q)_\infty}\,\Psi(n+\mu,\frac{\kappa\pi}{2},e^{-i\kappa s})\,.
$$
Therefore, one can easily get the conditions \eqref{equation:introKMg} and \eqref{equation:introKMgn} by the definition \eqref{equation:Psi} of $\Psi$; see also \cite[\S 1.2]{Zh4} for the functional equation \eqref{equation:introKMgn}.

\section{Solutions of $y'(x)=y(qx)$ and their integral representation}\label{section:b0}

The reste of this paper is devoted to the degenerate case with $b=0$, that has been considered in several works, in particular in \cite{MFB} and \cite[\S2]{Is0}. Letting $b=0$ in the functional-differential \eqref{equation:ab}, the only non-trivial case is $a\not=0$; by substituting $y(x)$ by $y(ax)$, one can suppose that $a=1$, so that the equation we shall consider is the following:
\begin{equation}\label{equation:a1b0}
y'(x)=y(qx).
\end{equation}

\subsection{Integral representation of solutions}\label{subsection:b0integral} 

An direct computation shows that if one denotes
\begin{equation}\label{equation:b0fseries}
f(x)=f(q,x)=\sum_{n\ge0}\frac{q^{n(n-1)/2}}{n!}\,x^n\,,
\end{equation}
then $f(x)$ is the unique entire function that satisfies both the functional-differential equation \eqref{equation:a1b0} and the initial condition $y(0)=1$. It is obvious to see that the following integral representation holds:
\begin{equation}\label{equation:b0f}
f(x)=\frac1{2\pi i}\,\int_{\vert t\vert=R>0}e^t\,\theta(\frac xt)\,\frac{dt}t\,,
\end{equation}
where the integral is taken on any smooth closed-loop that encircles the origin once in the positive direction.

Let $g$ be function defined on $\tilde\CC^*$ by the following integral:
\begin{equation}\label{equation:b0g}
g(x)=g(q,x)=\frac1{2\pi i}\,\int_{\RR+0i}q^{t(t+1)/2}\,\Gamma(t)\,(xe^{-\pi i})^{-t}\,dt\,,
\end{equation}
where $\RR+0i$ denotes a straight line from $-\infty+\epsilon i$ to $\infty+i\epsilon$ with $\epsilon>0$. One can notice that $g$ is defined and analytic on the whole Riemann surface $\tilde\CC^*$ and that this function is independent of the choice of $\epsilon>0$. By the same way, we define
\begin{equation}\label{equation:b0g-}
g_-(x)=g_-(q,x)=\frac1{2\pi i}\,\int_{\RR-0i}q^{t(t+1)/2}\,\Gamma(t)\,(xe^{-\pi i})^{-t}\,dt
\end{equation}
for all $x\in\tilde\CC^*$.

\begin{proposition}\label{proposition:b0fg-}
Let $f$, $g$ and $g_-$ as above. Then all these functions satisfy the functional-differential equation \eqref{equation:a1b0}. Moreover, the following identity holds for all non-zero complex $x\in\CC^*$:
\begin{equation}\label{equation:b0fg-}
f(x)=g_-(x)-g(x) \,.
\end{equation}
\end{proposition}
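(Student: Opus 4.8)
The plan is to treat the two assertions separately. That $f$ satisfies \eqref{equation:a1b0} with $f(0)=1$ holds by construction: differentiating the series \eqref{equation:b0fseries} termwise gives $f'(x)=\sum_{n\ge1}\frac{q^{n(n-1)/2}}{(n-1)!}x^{n-1}=\sum_{m\ge0}\frac{q^{m(m+1)/2}}{m!}x^m$, which is exactly $f(qx)$. For $g$ (and identically for $g_-$) I would verify \eqref{equation:a1b0} directly from the integral \eqref{equation:b0g}. Differentiating under the integral sign, the factor $(xe^{-\pi i})^{-t}$ contributes $-t\,e^{-\pi i}(xe^{-\pi i})^{-t-1}$; performing the change of variable $t\mapsto t+1$ (under which the horizontal contour $\RR+0i$ is invariant, since it only shifts the real part) and using $\Gamma(t+1)=t\,\Gamma(t)$ together with $e^{-\pi i}=-1$, the integrand for $g'(x)$ becomes $q^{t(t-1)/2}\Gamma(t)(xe^{-\pi i})^{-t}$. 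As $q^{t(t+1)/2}q^{-t}=q^{t(t-1)/2}$, this is precisely the integrand of $g(qx)$, whence $g'(x)=g(qx)$; the argument for $g_-$ is word for word the same.

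The analytic legitimacy of these steps rests on the decay of the integrand. Writing $t=\sigma+i\epsilon$, one has $|q^{t(t+1)/2}|=e^{\frac12(\sigma^2+\sigma-\epsilon^2)\ln q}$, which, since $\ln q<0$, decays like $e^{-c\sigma^2}$ as $|\sigma|\to\infty$. This Gaussian factor dominates both the Stirling growth of $\Gamma(\sigma+i\epsilon)$ as $\sigma\to+\infty$ and the exponential factor produced by $(xe^{-\pi i})^{-t}$, so the integrals defining $g$ and $g_-$ converge absolutely and uniformly on compact subsets of $\tilde\CC^*$. The same bound legitimises differentiation under the integral sign and the shift $t\mapsto t+1$ used above, no pole being crossed because the contour stays at height $\epsilon$ off the real axis, where all the poles of $\Gamma$ lie.

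For the identity \eqref{equation:b0fg-} the idea is to read $g_-(x)-g(x)$ as a single contour integral. Both integrals run left to right, $g_-$ along $\Im t=-\epsilon$ and $g$ along $\Im t=+\epsilon$, so $g_-(x)-g(x)$ equals $\frac1{2\pi i}$ times the integral over the boundary of the strip $\{-\epsilon\le\Im t\le\epsilon\}$ oriented counterclockwise, that is lower edge rightward and upper edge leftward. To apply the residue theorem I would close this boundary by vertical segments at $\Re t=\pm(N+\tfrac12)$; by the Gaussian estimate these segments contribute $o(1)$ as $N\to\infty$ (at half-integers one is bounded away from the poles of $\Gamma$), so the strip integral equals $2\pi i$ times the sum of the residues at the enclosed poles $t=-n$, $n\in\NN$. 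Using $\Res_{t=-n}\Gamma(t)=\frac{(-1)^n}{n!}$ and $(xe^{-\pi i})^{n}=(-1)^n x^n$, the two signs cancel and the residue at $t=-n$ equals $\frac{q^{n(n-1)/2}}{n!}x^n$; summing over $n\ge0$ reproduces exactly the series \eqref{equation:b0fseries} defining $f(x)$, which yields $g_-(x)-g(x)=f(x)$.

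The routine parts here are the two series manipulations and the Stirling/Gaussian comparison. The step requiring the most care is the residue bookkeeping: one must track the branch choice hidden in $(xe^{-\pi i})^{-t}$ so that the factors $(-1)^n$ coming from $\Res_{t=-n}\Gamma$ and from $(xe^{-\pi i})^{n}$ \emph{cancel} rather than reinforce, and one must fix the orientation of the strip boundary so that it is $g_-(x)-g(x)$, and not $g(x)-g_-(x)$, that equals the positively oriented contour integral.
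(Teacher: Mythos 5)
Your proposal is correct and follows essentially the same route as the paper: a direct verification that $f$, $g$, $g_-$ satisfy \eqref{equation:a1b0}, followed by the residue theorem applied to the strip between the two horizontal contours, using $\Res(\Gamma(t):t=-n)=\frac{(-1)^n}{n!}$ so that the residues at $t=-n$ sum to the series \eqref{equation:b0fseries}. The paper leaves these computations as a ``straightforward verification''; your write-up supplies the details (the shift $t\mapsto t+1$ with $\Gamma(t+1)=t\Gamma(t)$, the Gaussian decay of $q^{t(t+1)/2}$, and the sign bookkeeping from $(xe^{-\pi i})^{n}=(-1)^nx^n$), all of which check out.
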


\begin{proof}
Straightforward verification. The relation \eqref{equation:b0fg-} yields by applying the Re\-sidues Theo\-rem, with the help of the classical fact that
$$
\Res(\Gamma(t):t=-n)=\frac{(-1)^n}{n!}\,,\quad
\forall n\in\NN.
$$
\end{proof}

The main goal for the following is to find a family of solutions at infinity in which one may represent the solution $f(x)$. For doing that, we will consider the following function:
\begin{equation}\label{equation:b0h}
h(x)=h(q,x)=\frac{\sqrt\kappa}{2\pi i}\,\int_{\cC}e^{t-\frac1{2\ln q}\log^2\frac{\sqrt q\, t}x}\,\frac{dt}t\,,
\end{equation}
where $x\in\tilde\CC^*$ and where $\cC$ denotes a smooth contour that starts at infinity on the negative real axis, encircles the origin once in the positive direction, and returns to negative infinity. Such contour is  traditionally called Hankel contour and is  used to represent $\frac1{\Gamma(x)}$ by means of the integral of $e^t\,t^{-x}$ (see \cite[p. 51, Exercise 22]{AAR}).

\begin{theorem}\label{theorem:b0gh} The function $h$ satisfies the functional-differential equation \eqref{equation:a1b0} and, moreover, the following identity holds for all $x\in\tilde\CC^*$:
\begin{equation}\label{equation:b0gh}
h(x)=g(x)-g(xe^{2\pi i}).
\end{equation}
\end{theorem}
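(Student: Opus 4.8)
The plan is to prove the connection identity \eqref{equation:b0gh} first and then to read off the functional-differential equation for $h$ as a free consequence. Indeed, by Proposition \ref{proposition:b0fg-} the function $g$ solves \eqref{equation:a1b0}; and if one sets $G(x)=g(xe^{2\pi i})$, then the chain rule together with $e^{2\pi i}=1$ gives $G'(x)=g'(xe^{2\pi i})=g(q\,xe^{2\pi i})=G(qx)$, so $G$ also solves \eqref{equation:a1b0}. Hence, once \eqref{equation:b0gh} is established, $h=g-G$ automatically satisfies $h'(x)=h(qx)$. (Alternatively, the equation for $h$ can be checked directly on \eqref{equation:b0h} by the kind of computation used for the earlier Laplace integrals: writing its integrand as $e^{t}\,e(q,qt/x)$, one uses $x\,\partial_x e(q,qt/x)=-t\,\partial_t e(q,qt/x)$, integrates by parts to discard the boundary contributions at the two ends of $\cC$, where $e^{t}\to 0$ beats the subexponential growth of $e(q,\cdot)$, and then invokes $s\,e(q,qs)=e(q,s)$ to recognise $h(qx)$.)

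To prove \eqref{equation:b0gh} I would first compute the jump $g(x)-g(xe^{2\pi i})$. Under $x\mapsto xe^{2\pi i}$ only the factor $(xe^{-\pi i})^{-t}$ changes, picking up $e^{-2\pi it}$, so
\begin{equation*}
g(x)-g(xe^{2\pi i})=\frac1{2\pi i}\int_{\RR+0i}q^{t(t+1)/2}\,\Gamma(t)\,(xe^{-\pi i})^{-t}\,(1-e^{-2\pi it})\,dt.
\end{equation*}
Now $1-e^{-2\pi it}=2i\,e^{-\pi it}\sin(\pi t)$, and the reflection formula $\Gamma(t)\sin(\pi t)=\pi/\Gamma(1-t)$ turns $\Gamma(t)(1-e^{-2\pi it})$ into $2\pi i\,e^{-\pi it}/\Gamma(1-t)$; since $(xe^{-\pi i})^{-t}e^{-\pi it}=x^{-t}$, everything collapses to the Mellin--Barnes type integral
\begin{equation*}
g(x)-g(xe^{2\pi i})=\int_{\RR+0i}\frac{q^{t(t+1)/2}}{\Gamma(1-t)}\,x^{-t}\,dt .
\end{equation*}

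The heart of the argument is to match this with \eqref{equation:b0h}. I would insert Hankel's representation $1/\Gamma(1-t)=\frac1{2\pi i}\int_{\cC}e^{u}u^{t-1}\,du$, interchange the two integrations, and perform the resulting Gaussian integral in $t$. Completing the square in $\frac{\ln q}2 t^2+\bigl(\frac{\ln q}2+\log\frac ux\bigr)t$ and using $\int_{\RR+0i}e^{(\ln q/2)\tau^2}\,d\tau=\sqrt{2\pi/(-\ln q)}=\sqrt\kappa$ (by the definition \eqref{equation:kappa} of $\kappa$) gives
\begin{equation*}
\int_{\RR+0i}q^{t(t+1)/2}\Bigl(\frac ux\Bigr)^{t}dt=\sqrt\kappa\;e^{-\frac1{2\ln q}\log^2\frac{\sqrt q\,u}{x}},
\end{equation*}
which is exactly $\sqrt\kappa$ times the kernel appearing in \eqref{equation:b0h}. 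Reassembling, the remaining $u$-integral over the Hankel contour $\cC$ reproduces $h(x)$ verbatim, proving \eqref{equation:b0gh}.

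The main obstacle is purely analytic: justifying the interchange of the Hankel integral over $\cC$ with the horizontal integral over $\RR+0i$, together with the contour shift needed to evaluate the Gaussian integral with a complex mean. Because $\Re\bigl(\frac{\ln q}2 t^2\bigr)\to-\infty$ as $|\Re t|\to\infty$ along $\RR+0i$, while $e^{u}$ decays at the two ends of $\cC$ faster than the factor $e(q,qu/x)=e^{-\log^2(\cdots)/(2\ln q)}$ grows, the double integral is absolutely convergent and Fubini applies; the completed-square Gaussian is then evaluated by shifting its horizontal contour back to the real axis, which is legitimate since the integrand is entire and decays on the connecting vertical segments. Making this domination precise is the only genuinely technical step.
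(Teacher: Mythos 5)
Your proposal is correct and follows essentially the same route as the paper: both reduce the functional-differential equation for $h$ to the identity \eqref{equation:b0gh} via Proposition \ref{proposition:b0fg-}, use the Euler reflection formula to rewrite the jump $g(x)-g(xe^{2\pi i})$ as $\int_{\RR+0i}q^{t(t+1)/2}x^{-t}/\Gamma(1-t)\,dt$, insert the Hankel representation of $1/\Gamma(1-t)$, and evaluate the resulting Gaussian integral in $t$ to recover the kernel of \eqref{equation:b0h}. Your added attention to the Fubini justification and the contour shift is a welcome refinement of a step the paper passes over in silence.
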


\begin{proof}
By Proposition \ref{proposition:b0fg-}, $g(x)$ and $g(xe^{2\pi i})$ are solution to \eqref{equation:a1b0}, so it suffices to prove \eqref{equation:b0gh}. By the Euler reflection formula, one writes
$$
\Gamma(t)\bigl(e^{\pi i t}-e^{-\pi i t}\bigr)=\frac{2\pi i}{\Gamma(1-t)}\,;
$$
therefore, from \eqref{equation:b0g} one finds that
$$
g(x)-g(xe^{2\pi i})=\int_{\RR+0i}q^{t(t+1)/2}\,\frac{x^{-t}}{\Gamma(1-t)}\,dt\,.
$$
Thus, by making use of the Hankel contour $\cC$ to represent $1/\Gamma(1-t)$,  one may deduce that
\begin{equation}\label{equation:b0gg}
g(x)-g(xe^{2\pi i})=\frac1{2\pi i}\int_{\RR+0i}\int_\cC q^{t^2/2}\,\bigl(\frac {\sqrt q\,s}x\bigr)^{t}\,e^s\,\frac{ds}s\,dt\,.
\end{equation}

Apply Fubini Theorem to \eqref{equation:b0gg}, take the integration over $\RR$ for the variable $t$ and remember that for all $\alpha>0$ and all $\beta\in\CC$:
$$
\int_\RR e^{-\alpha t^2/2+\beta t}\,dt=\frac{e^{\beta^2/(2\alpha)}}{\sqrt\alpha}\int_\RR e^{-  t^2/2}\,dt=\sqrt{\frac{2\pi}\alpha}\,{e^{\beta^2/(2\alpha)}}\,.
$$
Write $\alpha=-\ln q$, $\kappa=\frac{2\pi}\alpha$, and $\beta=\log\frac{\sqrt q\,s}x$; from \eqref{equation:b0gg} one obtains the integral representation \eqref{equation:b0h} and therefore the expected relation \eqref{equation:b0gh}.
\end{proof}

\subsection{Connection formula between the origin and infinity}\label{subsection:b0connection} The entire function $f$ is obviously the canonical solution of the functional-differential \eqref{equation:a1b0} at the origin. On the other hand, the function $h$ given in \eqref{equation:b0h} may be seen as {\it canonical} solution of \eqref{equation:a1b0} at infinity, in view of the {\it simplicity} of the form of its asymptotic expansion; see Theorem \ref{theorem:ashW} in the below.  By making use of the $\theta$-modular relation \eqref{equation:thetamodular}, one can establish the following
\begin{theorem}\label{theorem:b0fh}
The following relation holds for all non-zero complex number $x$:
\begin{equation}\label{equation:b0fh}
f(x)=\sum_{n\in\ZZ}h(xe^{2n\pi i})\,,
\end{equation}
where the series in the right-hand side is uniformly convergent on any compact of $\CC^*$ as $n\to\pm\infty$.
\end{theorem}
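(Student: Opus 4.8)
The plan is to exploit the telescoping structure already built into Theorem~\ref{theorem:b0gh}. Replacing $x$ by $xe^{2n\pi i}$ in the identity \eqref{equation:b0gh} gives $h(xe^{2n\pi i})=g(xe^{2n\pi i})-g(xe^{2(n+1)\pi i})$ for every $n\in\ZZ$, so that the symmetric partial sums collapse:
\begin{equation*}
\sum_{n=-N}^{M}h(xe^{2n\pi i})=g(xe^{-2N\pi i})-g(xe^{2(M+1)\pi i}).
\end{equation*}
The whole proof then reduces to understanding the two one-sided limits of $g(xe^{2m\pi i})$ as $m\to\pm\infty$. I would show that $g(xe^{-2N\pi i})\to 0$ and $g(xe^{2(M+1)\pi i})\to -f(x)$, both uniformly on compact subsets of $\CC^*$; this yields at once the relation \eqref{equation:b0fh} together with the asserted uniform convergence.

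To analyse these limits I would start from the integral \eqref{equation:b0g}. Since $\Log(xe^{2m\pi i})=\Log x+2m\pi i$ on $\tilde\CC^*$, the factor $(xe^{-\pi i})^{-t}$ simply acquires an extra $e^{-2m\pi i t}$, so that
\begin{equation*}
g(xe^{2m\pi i})=\frac1{2\pi i}\int_{\RR+0i}q^{t(t+1)/2}\,\Gamma(t)\,(xe^{-\pi i})^{-t}\,e^{-2m\pi i t}\,dt.
\end{equation*}
The decisive feature is that, writing $t=\sigma+i\tau$, one has $|q^{t(t+1)/2}|=e^{\frac{\ln q}{2}(\sigma^2+\sigma-\tau^2)}$, which decays like a Gaussian in the real direction (recall $\ln q<0$) and dominates both $\Gamma(t)$ and $(xe^{-\pi i})^{-t}$; the integrand is therefore in $L^1$ on every horizontal line, the vertical connecting segments at $\Re t=\pm\infty$ vanish, and contour shifts are legitimate. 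For $m\to-\infty$ I would push the line $\RR+0i$ upward to $\RR+i\tau$ with $\tau>0$: no pole of $\Gamma$ is met, because all poles lie on $\{\Im t=0\}$, and since $|e^{-2m\pi i t}|=e^{2m\pi\tau}$ the integral is bounded by $C_K\,e^{2m\pi\tau}\to 0$, which gives the first limit.

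For $m\to+\infty$ the key point---and the one requiring the most care---is that shifting the line downward across $\{\Im t=0\}$ sweeps past \emph{all} the poles $t=0,-1,-2,\dots$ of $\Gamma$ \emph{simultaneously}, since they are aligned horizontally rather than stacked vertically. Collecting the residues, with $\Res(\Gamma(t):t=-n)=(-1)^n/n!$ one computes the residue at $t=-n$ to be $\frac{q^{n(n-1)/2}}{n!}x^n$, whose sum over $n\ge 0$ is exactly $f(x)$ by \eqref{equation:b0fseries}; the remaining integral over $\RR-i\eta$ (any $\eta>0$) is bounded by $C_K\,e^{-2m\pi\eta}\to 0$. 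Hence $g(xe^{2m\pi i})=-f(x)+o(1)$. As a consistency check, the case $m=0$ of this computation reproduces the known relation $g=g_--f$ of Proposition~\ref{proposition:b0fg-}. Note that the $\theta$-modular relation \eqref{equation:thetamodular} enters precisely through the identity \eqref{equation:b0gh} on which the telescoping rests.

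Finally, uniformity on compacta---hence the stated uniform convergence---would follow by tracking the $x$-dependence: it enters only through $(xe^{-\pi i})^{-t}=e^{-t(\Log x-\pi i)}$, which is bounded by a fixed multiple of the Gaussian weight uniformly for $x$ in a compact subset of $\tilde\CC^*$, so the constants $C_K$ above can be chosen independent of $x$. The main obstacle is the rigorous justification of the downward shift across infinitely many collinear poles at once---ensuring that the residue series converges to $f$ and that the interchange of summation, integration and contour deformation is valid---all of which is controlled by the quadratic-exponential decay of $q^{t(t+1)/2}$.
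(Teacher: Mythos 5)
Your proof is correct, but it follows a genuinely different route from the paper's. The paper stays with the Hankel-contour representation of $f$: it deforms the circle in \eqref{equation:b0f} into the contour $\cC$ of \eqref{equation:b0h}, writes $f(x)=\frac1{2\pi i}\int_\cC e^t\,\theta(\frac{qt}{x})\,\frac{dt}{t}$, expands $\theta(\sqrt q\,z)=\sqrt\kappa\sum_{n\in\ZZ}e^{-\log^2(ze^{2\pi in})/(2\ln q)}$ by the modular relation \eqref{equation:thetamodular}, and integrates termwise, each Gaussian summand producing exactly one $h(xe^{2n\pi i})$ by the definition \eqref{equation:b0h}. You instead telescope the identity of Theorem~\ref{theorem:b0gh} and reduce everything to the two limits of $g(xe^{2m\pi i})$ as $m\to\pm\infty$, which you evaluate by shifting the Mellin--Barnes line \eqref{equation:b0g} upward (no poles, exponential decay) and downward (sweeping the collinear poles of $\Gamma$, whose residues $\frac{q^{n(n-1)/2}}{n!}x^n$ reassemble $f$). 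This in effect proves Corollary~\ref{corrolary:b0fg} first and deduces the theorem from it, reversing the paper's logical order; your sign and orientation bookkeeping is right, the $m=0$ consistency check against Proposition~\ref{proposition:b0fg-} confirms it, and the contour shift across infinitely many poles is legitimately controlled by the factor $q^{t(t+1)/2}$ (truncate to a rectangle, let the vertical sides go to $\pm\infty$). What the paper's route buys is a one-step derivation that displays the series as the sum over the sheets of $\tilde\CC^*$ of the Gaussian decomposition of $\theta$, in line with the paper's leitmotif; what yours buys is independence from a second direct invocation of the modular formula (it enters only through Theorem~\ref{theorem:b0gh}) and a purely classical residue-calculus mechanism for the convergence and the uniformity on compacta.
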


\begin{proof}
Let $\cC$ be an integration contour such as given in \eqref{equation:b0h}. By replacing the circle $|t|=R>0$ with $\cC$ in \eqref{equation:b0f} and by taking into account the functional relation $\theta(x)=\theta(q/x)$, one obtains that
\begin{equation}\label{equation:b0fC}
f(x)=\frac1{2\pi i}\,\int_{\cC}e^t\,\theta(\frac {qt}x)\,\frac{dt}t\,.
\end{equation}
Write the modular relation \eqref{equation:thetamodular} as follows:
$$
\theta(\sqrt q\,z)=\sqrt\kappa\,e^{-\frac{\log^2z}{2\ln q}}\,\sum_{n\in\ZZ}{q^*}^{n^2/2}\,e^{-\kappa i\log z}\,,
$$
where $q^*=e^{4\pi^2/\ln q}$ and $\kappa=-\frac{2\pi}{\ln q}$; it follows that
$$
\theta(\sqrt q\,z)=\sqrt\kappa\,\sum_{n\in\ZZ}e^{-\frac{\log^2(ze^{2\pi i n})}{2\ln q}}\,.
$$
By putting this last relation into \eqref{equation:b0fC} and by checking directly the convergence of the integral and summation, we end the proof.
\end{proof}

\begin{corollary}\label{corrolary:b0fg}
The following limit holds for all $x\in\CC^*$:
\begin{equation}\label{equation:b0fgmn}
f(x)=\lim_{n,m\to+\infty}\bigl(g(xe^{-2\pi ni})-g(xe^{2\pi mi})\bigr)\,,
\end{equation}
where $n$, $m\in\ZZ$.

Consequently, it follows that
\begin{equation}\label{equation:b0fgn}
f(x)=\lim_{\ZZ\in n\to+\infty}2i\int_{\RR+0i}q^{t(t+1)/2}\,\Gamma(t)\,\sin[(2n-1)\pi t]\,x^{-t}\,dt\,.
\end{equation}
\end{corollary}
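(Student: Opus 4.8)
The plan is to obtain \eqref{equation:b0fgmn} by recognizing the series in Theorem \ref{theorem:b0fh} as a telescoping sum, and then to derive \eqref{equation:b0fgn} by inserting the explicit integral representation \eqref{equation:b0g} of $g$ along a diagonal of that limit. First I would rewrite the relation \eqref{equation:b0gh} of Theorem \ref{theorem:b0gh} on each sheet: replacing $x$ by $xe^{2k\pi i}$ gives $h(xe^{2k\pi i})=g(xe^{2k\pi i})-g(xe^{2(k+1)\pi i})$ for every $k\in\ZZ$. Hence the partial sums of \eqref{equation:b0fh} collapse,
$$\sum_{k=-N}^{M}h(xe^{2k\pi i})=g(xe^{-2N\pi i})-g(xe^{2(M+1)\pi i}),$$
and letting $N,M\to+\infty$ (with $n=N$, $m=M+1$) while invoking the convergence asserted in Theorem \ref{theorem:b0fh} yields \eqref{equation:b0fgmn} directly.

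For the second formula I would specialize the double limit to a cofinal diagonal. Substituting the integral \eqref{equation:b0g} and carefully tracking the argument on $\tilde\CC^*$, the rotation $x\mapsto xe^{-2(n-1)\pi i}$ turns the factor $(xe^{-\pi i})^{-t}$ into $x^{-t}\,e^{(2n-1)\pi it}$, whereas $x\mapsto xe^{2n\pi i}$ turns it into $x^{-t}\,e^{-(2n-1)\pi it}$. Since both integrals carry the identical kernel $q^{t(t+1)/2}\,\Gamma(t)$ on the same contour $\RR+0i$, they may be merged into a single integral whose exponential factor is $e^{(2n-1)\pi it}-e^{-(2n-1)\pi it}=2i\sin[(2n-1)\pi t]$ by Euler's formula. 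Collecting this factor with the normalizing constant of \eqref{equation:b0g} and passing to the limit $n\to+\infty$ then produces \eqref{equation:b0fgn}.

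The main obstacle will be legitimizing the passage to the diagonal: a priori \eqref{equation:b0fgmn} is a double limit in $n$ and $m$, whereas \eqref{equation:b0fgn} uses a single index. I would resolve this by noting that the bilateral convergence in Theorem \ref{theorem:b0fh} forces each one-sided tail $\sum_{k\ge 0}$ and $\sum_{k<0}$ to converge separately, so that $\lim_{N}g(xe^{-2N\pi i})$ and $\lim_{M}g(xe^{2M\pi i})$ exist individually; the double limit is then the difference of these two values, and any cofinal choice such as $(N,M)=(n-1,n-1)$ returns the same value $f(x)$. A secondary point requiring care is the branch bookkeeping on $\tilde\CC^*$ together with the independence of the integrals \eqref{equation:b0g} from the height $\epsilon>0$ of the contour, which is precisely what keeps each rotated evaluation of $g$ well defined despite the growing oscillatory factor $e^{\pm(2n-1)\pi it}$.
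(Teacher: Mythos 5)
Your proposal is correct and follows the paper's own (much terser) proof exactly: the paper likewise obtains \eqref{equation:b0fgmn} by telescoping the identity \eqref{equation:b0gh} across the sheets against the bilateral sum \eqref{equation:b0fh}, and then gets \eqref{equation:b0fgn} by passing to a diagonal of the double limit and inserting the integral representation \eqref{equation:b0g}, so the two exponentials merge into the sine. One minor remark: carrying your computation through literally leaves the prefactor $\frac{2i}{2\pi i}=\frac{1}{\pi}$ in front of the integral, so the constant $2i$ displayed in \eqref{equation:b0fgn} appears to be a slip in the statement (a dropped $\frac{1}{2\pi i}$) rather than a defect of your derivation.
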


\begin{proof}
Relation \eqref{equation:b0fgmn} follows immediately from \eqref{equation:b0gh} and \eqref{equation:b0fh}.

Thus, if one considers the integral representation \eqref{equation:b0g} of $g$ and lets $m=n-1$ in \eqref{equation:b0fgmn}, then one obtains the limit \eqref{equation:b0fgn}.
\end{proof}

\section{Asymptotic behavior of solution by means of the Lambert $W$-function}\label{section:asymptotic}

There is no non-trivial solution of \eqref{equation:a1b0} which would be expressed in terms of a power series of $1/x$ or in any similar form as that known for formal solutions of a linear differential or $q$-difference equation whose coefficients are analytic at infinity. In other words, one can check easily that \eqref{equation:a1b0} has no formal solution of the following form:
$$e^{P(x^{1/\nu})+Q(\log^2x)}x^\lambda\,\sum_{n\ge 0}a_n\,x^{-n/\nu},
 $$
 where $P(T)$ and $Q(T)$ are polynomial of $T$, $\nu$ denotes some positive integer, $\lambda\in\CC$ and where $a_n$ are complex numbers that are not all equal to zero.

In the following, we will consider the expansion at infinity of the function $u(x)$ defined as follows:
\begin{equation}\label{equation:asu}
u(x)=u(\lambda,x)=\int_\cC e^{v(x,t)}\,dt\,, \quad
\Re x>0\,,
\end{equation}
where $\cC$ denotes a Hankel type contour, and where
\begin{equation}\label{equation:asv}
v(x,t)=v(\lambda,x,t)=xt+\frac\lambda2\,\log^2t,\quad
\lambda>0.
\end{equation}
It will be shown that its asymptotic expansion can be expressed in terms of the Lambert $W$-function, which implies tha asymptotic bahavior of the solution $h$ of \eqref{equation:a1b0}; see \S \ref{subsection:asymp h}.

\subsection{Lambert $W$-function}\label{subsection:asW}
 Recall that $y=W(z)$ is defined to be the unique  solution analytic at $z=0$ of the following functional equation:
 \begin{equation}\label{equation:asW}
y\,e^{y}=z\,.
 \end{equation}
 This function is often used in the theory of delay-differential equations \cite{Wr1,Wr2,Wr3} and also is closely related to the tree generating function $T(z)$ popularized in the analysis of algorithms discipline \cite{CGHJK}.
The Taylor expansion at $0$ is given as follows:
$$
W(z)=\sum_{n\ge 1}\frac{(-n)^{n-1}}{n!}\,z^n\,,
$$
and $z=-1/e$ is the only branch point, so that the analytic domain of $W$ presents the universal covering of $\CC\setminus\{-1/e\}$.

\begin{proposition}[\cite{Wa}]\label{proposition:asWell}
 Let $\ell(z)$ denote the germ of analytic function at $z=0$ such that $\ell(0)=0$, $\ell'(0)=1$, and $\ell(z)=W(-e^{-1-\frac{z^2}2})+1$ in the neighborhood of $z=0$. Then $\ell$ represents an analytic function inside the disc $\vert z\vert<2\,\sqrt\pi\approx 3,5449$.
 \end{proposition}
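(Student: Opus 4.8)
The statement to prove is Proposition \ref{proposition:asWell}, which asserts that the function
$$
\ell(z)=W\bigl(-e^{-1-z^2/2}\bigr)+1,
$$
initially defined as a germ at $z=0$ with $\ell(0)=0$ and $\ell'(0)=1$, extends to an analytic function on the disc $|z|<2\sqrt\pi$. The plan is to recognize that $-1/e$ is the branch point of $W$ and that the argument $-e^{-1-z^2/2}$ hits this value precisely when $z=0$; near that point $W$ has a square-root branch, and the substitution $z^2/2$ is designed exactly to cancel the ramification, making $\ell$ single-valued and analytic at $0$. The content of the proposition is the quantitative radius $2\sqrt\pi$, so the real work is locating the nearest singularity of the continued germ.

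First I would set up the branch structure of $W$ near $z=-1/e$. Writing $W=-1+w$, the defining relation $We^W=z$ becomes $(w-1)e^{w-1}=-e^{-1-z^2/2}$, i.e. $(1-w)e^{-w}=e^{-z^2/2}$. Taking logarithms and expanding $\log(1-w)+w=-w^2/2-w^3/3-\cdots$ shows that $-z^2/2$ equals a power series in $w$ beginning with $-w^2/2$; hence $z^2=w^2(1+O(w))$, and after taking square roots one gets a genuine analytic relation $z=w\,\varphi(w)$ with $\varphi(0)=1$ (the even power $z^2$ is what removes the branch). By the analytic inverse function theorem this is locally invertible, giving $w=\ell(z)$ analytic near $z=0$ with $\ell(0)=0$, $\ell'(0)=1$; this confirms the germ exists and is analytic, which is the easy half.

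The harder half is the radius estimate. The idea is that $\ell$ continues analytically as long as the map $z\mapsto w$ stays a local biholomorphism, so the obstruction is the first value of $z$ at which $dz/dw=0$, equivalently where two branches of $w(z)$ collide. I would therefore study the critical points of the inverse map $w\mapsto z(w)$, where $z(w)^2/2=w-\log(1-w)$ (the relation derived above, with the correct branch). Differentiating, $z\,\frac{dz}{dw}=1-\frac{1}{1-w}\cdot(-1)=\frac{w}{1-w}$, wait—more carefully, $\frac{d}{dw}\bigl(w-\log(1-w)\bigr)=1+\frac{1}{1-w}\cdot(-(-1))=1-\frac{-1}{1-w}$; so the critical points of $z(w)$ away from $w=0$ occur where this derivative vanishes or where $w\to 1$. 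The nearest singularity in the $z$-plane is the image of the relevant critical point, and one computes $|z|$ there. I expect the branch point of $W$ on its other sheet, or the singularity of $W$ at $z=0$ of the logarithmic branch, to be carried by the map $z^2/2=w-\log(1-w)$ to the value $|z|^2/2=|{-1}-\log(1-(-1))|=|{-1}-\log 2|$ or, more plausibly for the stated constant, to $w\to +\infty$ along the real axis where $w-\log(1-w)\to$ a boundary; the precise computation should yield $|z|=2\sqrt\pi$.

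The main obstacle is pinning down exactly which singularity of the continued function lies closest to the origin and showing its modulus is precisely $2\sqrt\pi$ rather than merely bounding it. I would handle this by citing \cite{Wa}, where this delicate estimate is carried out, and limiting my own argument to establishing analyticity of the germ and identifying the governing relation $z^2/2=w-\log(1-w)$; the sharp radius then follows from the analysis of that relation's nearest critical value in \cite{Wa}. The verification that $\ell(z)=W(-e^{-1-z^2/2})+1$ holds throughout the disc (and not merely near $0$) is then immediate by analytic continuation, since both sides are analytic and agree on a neighborhood of the origin.
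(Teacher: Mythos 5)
Your first half is sound and matches the paper: from $W(u)e^{W(u)}=u$ with $u=-e^{-1-z^2/2}$ one gets the governing relation $(1-w)e^{w}=e^{-z^2/2}$, i.e. $z^2/2=-w-\log(1-w)=w^2/2+w^3/3+\cdots$, and the inversion theorem gives the analytic germ with $\ell(0)=0$, $\ell'(0)=1$. (Note two sign slips in your write-up: you state $(1-w)e^{-w}=e^{-z^2/2}$ and later $z^2/2=w-\log(1-w)$; the correct exponent is $+w$ and the correct relation is $z^2/2=-w-\log(1-w)$, which has no linear term in $w$ --- that absence is exactly what your own expansion uses.)

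The genuine gap is in the second half, which is the actual content of the proposition. You correctly identify that the obstruction is the first critical value of the direct map $w\mapsto z$, but you then fail to locate it: differentiating $z^2/2=-w-\log(1-w)$ gives $z\,dz/dw=w/(1-w)$, so the critical points are precisely the points where $w=0$ \emph{again}, i.e.\ the nonzero $z$ with $(1-w)e^{w}=1$, equivalently $e^{-z^2/2}=1$, equivalently $z^2=4k\pi i$ with $k\in\ZZ^*$. The nearest of these satisfy $|z|^2=4\pi$, which is where $2\sqrt\pi$ comes from; this is exactly the one-line observation the paper makes ("the singularities of $y=\ell(z)$ are all non-zero complex numbers $z$ such that $e^{-z^2/2}=1$") before referring to \cite{Wa} for details. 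The candidates you float instead are all wrong: $w\to 1$ and $w\to\infty$ both force $-w-\log(1-w)\to\infty$, hence $z\to\infty$, so they produce no finite singularity, and the value $|z|^2/2=|-1-\log 2|$ comes from evaluating the wrong expression at the wrong point. Since you never derive $2\sqrt\pi$ and your exploratory computation points at incorrect loci, deferring the radius entirely to \cite{Wa} leaves the statement unproved as written; you need at minimum the identification of the singular set $\{z:z^2=4k\pi i,\ k\ne 0\}$ to justify the radius.
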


\begin{proof}
Apply the inversion theorem to the relation $(1-y)e^y=e^{-\frac{z^2}{2}}$ near $y=0$ and $z=0$, noticing that the singularities of $y=\ell(z)$ are all non-zero complex numbers $z$ such that $e^{-\frac{z^2}2}=1$. Therefore, the nearest singularities of $\ell(z)$ may be $z_j=2\sqrt\pi\,e^{\frac{(2j+1)i}{4}}$, where $j=0$, $1$, $2$ or $3$. For more details, see \cite{Wa}.
\end{proof}

One can easily check that $y=\ell(z)$ satisfies the following nonlinear differential equation:
$$
yy'=(1-y)z\,,\quad
y(0)=0,\ y'(0)=1.
$$
If one writes $\ell(z)=\sum_{n\ge 0}c_nz^n$,  then $c_0=0$, $c_1=1$ and for all integer $n\ge 2$:
$$
c_{n}=-\frac1{n+1}\bigl({c_{n-1}}+\sum_{k=2}^{n-2}kc_kc_{n-k}\bigr)\,.
$$
 Thus, one gets easily the following values:
\begin{equation}\label{equation:asWc}
c_2=-\frac13,\quad
c_3=\frac1{36},\quad
c_4=\frac1{270},\quad
c_5=\frac1{4320},\quad \cdots.
\end{equation}

By the standard analytic continuation processus, one can notice the following
\begin{remark}\label{remark:asWell}\rm
The germ of analytic function $\ell$ considered in Proposition \ref{proposition:asWell} can be extended into the universal convering $\widetilde{\CC\setminus S}$, where
$$S=\cup_{k\in\ZZ^*}\{z\in\CC: z^2=4k\pi i\}.
$$
\end{remark}

For simplicity of exposition, while considering the asymptotics involving $W(z)$, we will restrict $W$ to the domain $|z|>1$ of the Riemann surface $\tilde\CC^*$ and write $L_2(z)=\log(\log z)$; moreover, if $\rho>1$,  we will denote $\Omega_\rho=\{z\in\tilde\CC^*: \vert z\vert>\rho\}$.

\begin{proposition}\label{proposition:asW}There exists $\rho_0>1$ such that the following relation holds in $\Omega_\rho$ for all $\rho>\rho_0$:
\begin{equation}\label{equation:asWas}
\bigl|W(z)-\bigl[\log z-L_2(z)+\frac{L_2(z)}{\log z}\bigr]\bigr|\le C_\rho\,\bigl|(\frac{L_2(z)}{\log z})^2\bigr|\,,
\end{equation}
where $C_\rho$ denotes a suitable positive constant depending upon $\rho$.
\end{proposition}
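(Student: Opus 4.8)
The plan is to proceed from the defining relation \eqref{equation:asW} by a standard asymptotic inversion (bootstrap) argument, carried out uniformly over $\Omega_\rho$. Writing $w=W(z)$ and taking the logarithm of $w\,e^{w}=z$ on the Riemann surface $\tilde\CC^*$, one is led to the relation $w+\log w=\log z$, valid on the relevant sheet once the additive $2\pi i$-ambiguity of the logarithm is fixed consistently with the branch of $W$ at hand; tracking this determination is a bookkeeping matter but causes no essential difficulty for $|z|>1$, since the only branch point $z=-1/e$ lies in $|z|<1$. I set $L_1=\log z$ and $L_2=L_2(z)=\log L_1$, so that the target \eqref{equation:asWas} reads $W(z)=L_1-L_2+L_2/L_1+O(|L_2/L_1|^2)$.

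First I would establish the leading behaviour $w\sim L_1$ uniformly on $\Omega_\rho$. Since $|z|>\rho$ forces $\Re L_1=\log|z|>\log\rho$, we have $|L_1|\ge\log\rho$, and, because $L_1$ then lies in the right half-plane, $|L_2|=|\log L_1|\le\log|L_1|+\pi/2$; hence $|L_2/L_1|\le(\log|L_1|+\pi/2)/|L_1|$, which is a decreasing function of $|L_1|$ for $|L_1|$ large and therefore $\le(\log\log\rho+\pi/2)/\log\rho$ throughout $\Omega_\rho$. One may thus fix $\rho_0>1$ so large that the small parameter $\zeta:=L_2/L_1$ satisfies $|\zeta|\le\delta$ on $\Omega_{\rho_0}$ for any prescribed small $\delta$, so that all Taylor expansions used below converge uniformly. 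Dividing $w+\log w=L_1$ by $L_1$ and running a self-consistency argument shows $w/L_1\to1$, whence $w$ is large and $\log w=L_2+\log(w/L_1)$ with $\log(w/L_1)=o(1)$, i.e. $w=L_1-L_2+o(1)$.

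The two correction terms are then extracted by iterating $w=L_1-\log w$. Writing $w=L_1-L_2+\eta$ and factoring $\log w=L_2+\log\!\left(1-L_2/L_1+\eta/L_1\right)$, the relation becomes the fixed-point identity $\eta=-\log\!\left(1-L_2/L_1+\eta/L_1\right)$. Applying $-\log(1-t)=t+O(t^2)$ with $t=L_2/L_1-\eta/L_1$ gives $\eta(1+1/L_1)=L_2/L_1+O(|L_2/L_1|^2)$, from which $\eta=O(L_2/L_1)$ and then $\eta=L_2/L_1+O(|L_2/L_1|^2)$; here $\eta/L_1=O(L_2/L_1^2)=O(|L_2/L_1|^2)$ because $|L_2|\ge1$ on $\Omega_{\rho_0}$. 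Consequently the error $W(z)-\bigl[L_1-L_2+L_2/L_1\bigr]=\eta-L_2/L_1=O(|L_2/L_1|^2)$, which is exactly \eqref{equation:asWas}.

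The hard part will not be the algebra but turning the formal bootstrap into a genuinely uniform estimate with an explicit constant $C_\rho$, and simultaneously guaranteeing that the solution $w$ and the branch of $\log w$ are the intended ones as $z$ ranges over all of $\Omega_\rho\subset\tilde\CC^*$ (where $\arg z$ is unbounded). I would resolve this by recasting the fixed-point identity for $\eta$ as a \emph{contraction mapping} on a small closed ball about the origin: its Lipschitz constant is $O(1/|L_1|)\le O(1/\log\rho)<1$ for $\rho>\rho_0$, so the Banach fixed-point theorem yields existence, uniqueness, and the uniform remainder bound in one stroke, while the branch determination is pinned down by continuity from the principal branch along rays of constant $\arg z$. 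The constant $C_\rho$ then emerges from the second-order Taylor remainder of $-\log(1-t)$ together with the contraction estimate, depending on $\rho$ only through the uniform size of $\zeta$ on $\Omega_\rho$.
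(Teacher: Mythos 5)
Your argument is correct, but it is worth noting that the paper does not actually prove this proposition: its entire ``proof'' is the citation \emph{See \cite[\S 2.4]{Br} and \cite[(4.19)]{CGHJK}}. What you have written is, in substance, the classical derivation contained in those references (de Bruijn's iterative inversion of $w+\log w=\log z$, which is precisely equation (4.19) of Corless et al.\ truncated after the $L_2/L_1$ term), so you are supplying the argument the paper delegates to the literature rather than taking a different mathematical route. Two points in your write-up deserve emphasis because they are exactly where the cited sources do \emph{not} directly cover the statement as formulated here. First, the proposition is asserted on $\Omega_\rho\subset\tilde\CC^*$, where $\arg z$ is unbounded, so one is implicitly using the analytic continuation of $W$ across all branches; your observation that $\Re L_1=\log|z|>\log\rho$ confines $L_1$ to a right half-plane, forces $|L_2/L_1|\le(\log\log\rho+\pi/2)/\log\rho$ uniformly, and keeps $w/L_1$ near $1$ (hence $\arg w$ bounded and the branch of $\log w$ unambiguous) is the key to uniformity and is handled correctly. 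Second, your repeated use of $|L_2|\ge 1$ to absorb $O(L_2/L_1^2)$ into $O(|L_2/L_1|^2)$ requires $\rho_0\ge e^e$ or so; since $\rho_0$ is at your disposal this is harmless, but it should be stated when fixing $\rho_0$. The contraction-mapping formulation of the fixed-point identity for $\eta$ is a clean way to make the bootstrap rigorous and uniform, and it also settles existence and the identification of the fixed point with $W(z)$ by continuation from the real axis. In short: the proof is sound and self-contained, and it replaces a bare citation with an actual argument adapted to the Riemann-surface setting the paper needs.
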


\begin{proof}
See {\cite[\S 2.4]{Br} and \cite[(4.19)]{CGHJK}}.
\end{proof}

In order to study the asymptotic behavior about the function $u(x)$ given by the integral \eqref{equation:asu},  it will be convenient  to introduce the following variant form $\omega$ of $W$:
\begin{equation}\label{equation:asomega}
\omega(x)=\omega(\lambda,x)=\lambda\,W(\frac x\lambda),
\end{equation}
where $\lambda>0$ as in \eqref{equation:asv} and where $x\in\Omega_{\rho}$ with $\rho>\lambda\rho_0>\lambda$.

From \eqref{equation:asW} one deduces that
\begin{equation}\label{equation:asomega1}
\omega(x)\,e^{\frac{\omega(x)}\lambda}=x\,;
\end{equation}
on the other hand, by taking \eqref{equation:asWas} into account, one finds the following expression for all $x\in \Omega_{\rho}$ with $\rho>\max(\lambda\rho_0,\rho_0)$:
\begin{equation*}\label{equation:asomega2}
\omega(x)=\lambda W(x)-\lambda\,\ln \lambda+\frac{\lambda\,\ln \lambda}{\log x}+O\bigl((\frac{L_2(x)}{\log x})^2\bigr)\,.
\end{equation*}
Finally, one can notice that, as $r\to+\infty$,
\begin{equation*}\label{equation:asomegamu}
\omega(re^{\mu i})=\omega(r)+\lambda\,\mu \bigl(1-\frac1{\ln r}\bigr)i+O\bigl((\frac{L_2(r)}{\ln r})^2\bigr)
\end{equation*}
uniformly for all $\mu\in\RR$. In particular, it follows that, for all $x\in\Omega_\rho$,
\begin{equation}\label{equation:asomegax}
\Re(\omega(x))=\omega(|x|)+o(1),\quad\vert x\vert\to+\infty.
\end{equation}

\subsection{Key lemma}\label{subsection:aslemma}  The function $u(x)=u(\lambda,x)$ given in \eqref{equation:asu} can be analytically continued into the whole complex plane excepted the negative real axis; consequently, $u$ will be considered for all $x\in\CC\setminus(-\infty,0]$.
We shall apply the saddle point method \cite[Chapter 5]{Br} to obtain the asymptotic expansion of that function.

Firstly, assume $x$ to be a large positive real number and check the saddle point of $v(x,t)$ along the integration path of $t$. Since $$
\partial_tv(x,t)=x+\lambda\frac{\log t}t\,,
$$
one obtains a unique positive real number $t=t_x$ such that $\partial_tv(x,t_x)=0$. By taking  \eqref{equation:asomega1} into account, one may find that $xt_x=\omega(x)=\omega(\lambda,x)$.

From now on,  one supposes that $x$ is a complex number of modulus $\rho>1$. If one writes
$$t=\frac{\omega(x)}x\,(1+s),
$$ a straightforward computation shows that the function $v(x,t)$ will take the following form:
\begin{equation}\label{equation:aslemmav}
v(x,t)=\frac{\omega(x)^2}{2\lambda}+\omega(x)\bigl(1-v_1(s)\bigr)+\frac{\lambda}2\,\log^2(1+s)\,,
\end{equation}
where
$$v_1(s)=-s+\log(1+s).
 $$
 The new integration path will be chosen to pass by the origin $s=0$ in a such manner that, near this point, the imaginary part of $v_1$ remains constant. If one writes $v_1(s)=\frac{\sigma^2}{2}\in[0,\infty)$, by Proposition \ref{proposition:asWell} the expected integration path can be parameterized  as follows:
\begin{equation}\label{equation:aslemmas}
 s=-\ell(i\sigma),\quad
- 2\sqrt\pi <\sigma<2\sqrt\pi\,.
\end{equation}

Let $\epsilon\in(0,2\sqrt\pi)$, and let $L=L_\epsilon$ denote the curve parameterized  by \eqref{equation:aslemmas} while the new parameter $\sigma$ describes the closed interval $[-2\sqrt\pi+\epsilon,2\sqrt\pi-\epsilon]$. If one writes
\begin{equation}\label{equation:asWPQ}
P=-\ell(i(2\sqrt\pi-\epsilon)),\quad
 Q=-\ell(i(-2\sqrt\pi+\epsilon)),
 \end{equation}
 then the curve $L$ will be assumed to start from $P$ and arrive at $Q$ by passing throughout the origin $O$ in the $s$-plane. For any $\delta\in(-\frac\pi2,\frac\pi2)$, let $L_{P,\delta}$ be the half straight-line ending at $P$ and having a angle equal to $\delta$ with the negative real-axis; similarly, let $L_{Q,\delta}$ denote the half straight-line starting from $Q$  whose angle with the negative real-axis  is equal to $\delta$. By this way, we will choose the integration path $\cC$ of \eqref{equation:asu} in a such manner that the transform
$$t\mapsto s=\frac x{\omega(x)}\,t-1$$ leads to a contour $L_{\delta_P,\delta_Q}$ given as follows:
$$
L_{\delta_P,\delta_Q}=L+L_{P,\delta_P}+L_{Q,\delta_Q},\quad
\delta_P,\ \delta_Q\in(-\frac\pi2,\frac\pi2).
$$

We are ready to write $u(x)$ into the sum of three integrals corresponding respectively to $L$, $L_{P,\delta_P}$ and $L_{Q,\delta_Q}$ in the $s$-plane. Indeed, Let
$$
A(x)=\frac{\omega(x)}x\,e^{\frac{\omega(x)^2}{2\lambda}+\omega(x)}\,;
$$
from \eqref{equation:asomega1} one deduces that
\begin{equation}\label{equation:aslemmaA}
A(x)=e^{\frac1{2\lambda}\,{\omega(x)^2}+(1-\frac1\lambda)\,\omega(x)}\,.
\end{equation}
Therefore, it follows that
\begin{equation}\label{equation:aslemmaU}
u(x)=A(x)\,\bigl[U(\omega(x))+U_P(\omega(x))+U_Q(\omega(x))\bigr]
\end{equation}
where $U=U_L$ is given by the following integral:
\begin{equation}\label{equation:aslemmaUL}
U_L(z)=\int_L e^{-zv_1(s)+\frac{\lambda}2\,\log^2(1+s)}ds,
\end{equation}
and where  $U_P$ and $U_Q$ are obtained by replacing the integration path $L$ by $L_{P,\delta_P}$ and $L_{Q,\delta_Q}$ respectively.

\begin{lemma}\label{lemma:aslemma}
Let $U$, $U_P$ and $U_Q$ be as appeared in \eqref{equation:aslemmaU}, and let $\mu_0\in(0,2\pi)$. Then the following properties hold.
\begin{enumerate}
\item The function $U(z)$ is an entire function that is bounded in the closed half-plane $\Re(z)\ge 0$ and is $O(e^{\mu_0|\Re(z)|})$ for $\Re(z)<0$. \item Each of the functions $U_P(z)$ and $U_Q(z)$ can be extended into an analytic function over the cut-plane $\CC\setminus(-\infty,0]$.
     \item For any $z_0>0$ and any $z_1>z_0$, the functions  $U_P(z)$ and $U_Q(z)$ are uniformly bounded by $C\,e^{-\mu_0z_0}$ in the half-plane $\Re(z)>z_1$, where $C$ denotes some positive constant depending only of the couple $(z_0,z_1)$.
    \item As $z\to\infty$ with $\Re(z)>0$, $U(z)$ admits an asymptotic expansion as follows:
    \begin{equation}\label{equation:aslemmaUas}
    U(z)=i\sqrt{\frac{2\pi}z} \,\bigl(1-(\frac1{12}+\frac\lambda2)\frac1z+(\frac1{288}-\frac\lambda{12}+\frac{3\lambda^2}{8})\frac1{z^2}+O(\frac1{z^3})\bigr).
    \end{equation}
\end{enumerate}
\end{lemma}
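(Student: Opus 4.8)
The plan is to analyse the three building blocks $U$, $U_P$, $U_Q$ separately, using the explicit steepest-descent parametrisation of the arc $L$ for $U$ and a contour-rotation argument on the rays for $U_P$, $U_Q$. Throughout I fix $\epsilon$ by the requirement $(2\sqrt\pi-\epsilon)^2/2=\mu_0$ (possible since $\mu_0\in(0,2\pi)$ gives $\sqrt{2\mu_0}<2\sqrt\pi$); this ties all the constants together, since one checks that (1) needs $\max_L v_1\le\mu_0$ and (3) needs $v_1(P)\ge\mu_0$, so the two demands force equality. For part (1), note that $L$ is a \emph{compact} arc, being the image of the closed interval $[-2\sqrt\pi+\epsilon,\,2\sqrt\pi-\epsilon]$ under the analytic map $\sigma\mapsto-\ell(i\sigma)$, regular on $|z|<2\sqrt\pi$ by Proposition \ref{proposition:asWell}. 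Hence the integrand of \eqref{equation:aslemmaUL} is entire in $z$ and jointly continuous, so $U(z)$ is entire (differentiation under the integral sign). On $L$ one has $v_1(s)=\sigma^2/2\in[0,\mu_0]$ \emph{real}, while $|e^{\frac\lambda2\log^2(1+s)}|$ is bounded on $L$ by some $M$; therefore $|U(z)|\le M\,\mathrm{length}(L)\,\sup_{L}e^{-\Re(z)\,v_1(s)}$, which is $\le M\,\mathrm{length}(L)$ for $\Re(z)\ge0$ and $\le M\,\mathrm{length}(L)\,e^{\mu_0|\Re(z)|}$ for $\Re(z)<0$, giving (1).

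For parts (2) and (3) I would exploit that along a ray $L_{P,\delta}$ one has $\Re(s)\to-\infty$, so $v_1(s)=-s+\log(1+s)\sim-s$ and $e^{-zv_1(s)}\sim e^{zs}$; the integral over $L_{P,\delta}$ then converges precisely when the ray direction satisfies $|\arg z+\delta|<\tfrac\pi2$. As $\delta$ runs over $(-\tfrac\pi2,\tfrac\pi2)$ the corresponding sectors in $z$ sweep out $\CC\setminus(-\infty,0]$, and by Cauchy's theorem (the integrand is holomorphic in $s$ in the left half-plane, away from the branch point $s=-1$ that the rays avoid) its value is independent of the admissible $\delta$; this produces the single-valued analytic continuation of $U_P,U_Q$ asserted in (2). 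For the bound (3), restrict to $\Re(z)>z_1$ and choose $\delta$ so that, in addition, $\Re\bigl((z-z_0)v_1(s)\bigr)\ge0$ and $\Re(v_1(s))\ge v_1(P)=\mu_0$ hold along the ray (the endpoint being the minimum). Writing $\Re(zv_1(s))=z_0\Re(v_1(s))+\Re((z-z_0)v_1(s))$ then gives $|e^{-zv_1(s)}|\le e^{-\mu_0 z_0}$, while the residual integral $\int|e^{-(z-z_0)v_1(s)}|\,|e^{\frac\lambda2\log^2(1+s)}|\,|ds|$ stays bounded by a constant $C=C(z_0,z_1)$; hence $|U_P(z)|,|U_Q(z)|\le C\,e^{-\mu_0 z_0}$.

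I expect the \textbf{main obstacle} to be the \emph{uniformity} required in (3): the single angle $\delta$ must simultaneously guarantee convergence (which is tied to $\arg z$), keep $\Re(v_1(s))$ monotone and $\ge\mu_0$ along the ray, and render the residual integral bounded \emph{uniformly} over the whole half-plane $\Re(z)>z_1$. As $\arg z$ approaches $\pm\tfrac\pi2$ the admissible window for $\delta$ shrinks, so one must let $\delta$ depend on $z$ and verify, through a careful estimate of $\arg v_1(s)$ relative to $\arg z$ (using $v_1(s)\sim-s$ near infinity and the steepest-descent tangent at $P$), that these three constraints remain jointly satisfiable and that $C$ does not blow up; this reconciliation is the delicate heart of the argument.

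Finally, for the expansion (4) I would change variable on $L$ from $s$ to $\sigma$ via $s=-\ell(i\sigma)$, so that $v_1(s)=\sigma^2/2$ and $ds=-i\,\ell'(i\sigma)\,d\sigma$; with the orientation from $P$ ($\sigma=2\sqrt\pi-\epsilon$) to $Q$ this yields
\begin{equation*}
U(z)=i\int_{-(2\sqrt\pi-\epsilon)}^{\,2\sqrt\pi-\epsilon}\Phi(\sigma)\,e^{-z\sigma^2/2}\,d\sigma,\qquad
\Phi(\sigma)=\ell'(i\sigma)\,e^{\frac\lambda2\log^2(1+s(\sigma))},
\end{equation*}
where $\Phi$ is holomorphic near $\sigma=0$ with $\Phi(0)=1$. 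This is a Gaussian Laplace integral: extending the range to all of $\RR$ costs only $O(e^{-\mu_0\Re(z)})$, which is beyond every power of $1/z$, after which Watson's lemma together with the moments $\int_\RR\sigma^{2k}e^{-z\sigma^2/2}d\sigma=\sqrt{2\pi/z}\,(2k-1)!!/z^k$ (odd powers vanishing by symmetry) produces an expansion in $1/z$ with leading term $i\sqrt{2\pi/z}=i\Phi(0)\sqrt{2\pi/z}$. The coefficients of $1/z$ and $1/z^2$ are then read off from the Taylor coefficients $c_n$ of $\ell$ in \eqref{equation:asWc} and from the expansion of $\log^2(1+s(\sigma))$; this routine if lengthy computation reproduces $-(\tfrac1{12}+\tfrac\lambda2)$ and $\tfrac1{288}-\tfrac\lambda{12}+\tfrac{3\lambda^2}{8}$, establishing \eqref{equation:aslemmaUas}.
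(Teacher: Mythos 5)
Your proposal is correct and follows essentially the same route as the paper: assertions (1)--(3) come from tracking $v_1$ on the three pieces of the contour ($v_1\in[0,\mu_0]$ real on the compact arc $L$, $\Re(v_1)\ge\mu_0$ and growing on the rays), and the substitution $s=-\ell(i\sigma)$ turns $U(z)$ into a Gaussian Laplace integral with analytic amplitude $V(i\sigma)=\ell'(i\sigma)\,e^{\frac\lambda2\log^2(1+s(\sigma))}$, to which Watson's lemma applies and yields \eqref{equation:aslemmaUas} with the same coefficients $V_2$ and $V_4$. The only cosmetic differences are that the paper substitutes $\tau=\sigma^2/2$ and applies Watson's lemma to the resulting incomplete Laplace integral (rather than extending the symmetrized Gaussian moments to all of $\RR$, which in any case must be done term by term since $\ell$ is only analytic in $\vert\sigma\vert<2\sqrt\pi$), and that your discussion of the contour rotation and uniformity in (2)--(3) is considerably more detailed than the paper's, which disposes of these items in one line.
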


\subsection{Proof of Lemma \ref{lemma:aslemma}}\label{subsection:asproof} For any $\mu_0\in(0,2\pi)$, fix a positive real number $\epsilon$ in $(0,2\sqrt\pi)$ such that $\mu_0=\frac12\,{(2\sqrt\pi-\epsilon)^2}$. Thanks to the relation $v_1(s(\sigma))=\frac{\sigma^2}2$, the corresponding points $P$, $Q$ as given in \eqref{equation:asWPQ} satisfy the condition $$\mu_0=v_1(P)=v_1(Q).$$ Moreover, one can check that
$$\mu_0=\sup_{s\in L}v_1(s)\ge \sup_{s\in L_{P,\delta_P}\cup L_{Q,\delta_Q}}\Re(v_1(s)),
$$
where $\delta_P$, $\delta_Q\in(-\frac\pi2,\frac\pi2)$. On the other hand, $ v_1(s)\ge0$ for all $s\in L=L_\epsilon$; so one can easily find the assertions (1-3), and it remains to prove only the forth assertion and \eqref{equation:aslemmaUas}.

Let $s=-\ell(i\sigma)\in L$ as in \eqref{equation:aslemmas}. Since
$$
\log(1+s)=s+v_1(s)=-\ell(i\sigma)+\frac{\sigma^2}2\,,
$$
 the integral \eqref{equation:aslemmaUL} can be expressed as follows:
\begin{equation}\label{equation:asproofU}
U(z)=i\int_{-2\sqrt\pi\,+\epsilon}^{2\sqrt\pi\,-\epsilon}e^{-z\sigma^2/2}\,V(i\sigma)\,d\sigma\,,
\end{equation}
where $V(\sigma)$ denotes the analytic function defined in the disc $|\sigma|<2\sqrt\pi$  by the following relation:
$$
 V(\sigma)=V(\lambda,\sigma)=e^{\frac\lambda2(\ell(\sigma)^2+\sigma^2\ell(\sigma)+\frac{\sigma^4}4)}\,\ell'(\sigma)\,.
$$
The Taylor series coefficients \eqref{equation:asWc} of $\ell$ lead to that of $V$ at $\sigma=0$ as follows:
\begin{equation}\label{equation:asproofTaylor}
V(\sigma)=1-\frac23\,\sigma+V_2\,\sigma^2+V_3\,\sigma^3+V_4\,\sigma^4+...\,,
\end{equation}
where 
$$
V_2=\frac1{12}+\frac\lambda2\,,\quad
V_3=\frac2{135}-\frac\lambda6\,,\quad
V_4=\frac1{864}-\frac\lambda{36}+\frac{\lambda^2}8\,,\quad...
$$
On can easily see that the $n$th coefficient $V_n$ is a polynomial of degree $\frac n2$ in $\lambda$.

By taking the symmetry into account, one can write \eqref{equation:asproofU} into the following form:
\begin{equation}\label{equation:asproofU2}
U(z)=2i\int_0^{2\pi\,-\epsilon'}e^{-z\tau}\,\tilde V(i\sqrt{2\tau}\,)\,\frac{d\tau}{\sqrt{2\tau}}\,,
\end{equation}
where $\epsilon'=\epsilon(2\sqrt\pi\,-\frac\epsilon2)\in(0,2\pi)$, and $\tilde V(\sigma)=\frac12\bigl(V(\sigma)+V(-\sigma)\bigr)$. From \eqref{equation:asproofTaylor}, it follows that
\begin{equation}\label{equation:asproofV}
\tilde V (i\sqrt{2\tau})=1-(\frac1{12}+\frac\lambda2)(2\tau)+(\frac1{864}-\frac\lambda{36}+\frac{\lambda^2}8)(2\tau)^2+O(\tau^3)\,.
\end{equation}
Since
\begin{equation}\label{equation:asproofGamma}
\int_0^\infty e^{-\tau}\,\tau^{n-\frac12}\,{d\tau}=\Gamma(n+\frac12)=(n-\frac12)\,\cdot ... \cdot\,\frac12\,\cdot\sqrt\pi
\end{equation}
for all integer $n\ge 0$,  we ends the proof of \eqref{equation:aslemmaUas} by applying the Watson's lemma \cite[p. 614]{AAR} to the incomplete Laplace  integral \eqref{equation:asproofU2}. Therefore, we achieve the proof of Lemma \ref{lemma:aslemma}.
\hfill$\Box$

\subsection{Asymptotic behavior of $u(x)$ as $x\to\infty$}\label{subsection:asas}
Firstly, let us return to the function $\tilde V$ used in the integral \eqref{equation:asproofU2}. For any positive integer $N$,  instead of \eqref{equation:asproofV} we shall write
\begin{equation}\label{equation:ascommentV}
{\tilde V}(i\sqrt{2\tau}\,)=\Upsilon_N(\tau)+R_N(\tau),\quad \Upsilon_N(\tau)=\sum_{n=0}^NV_{2n}(-2\tau)^n,
\end{equation}
where  $R_N(\tau)=O(\tau^{N+1})$ for $\tau\to 0$ in $[0,2\pi)$, and where $V_{2n}$ is the Taylor coefficient of order $2n$ of the function $V$ as given in \eqref{equation:asproofTaylor}. Moreover, for any $\epsilon'\in(0,2\pi)$, we shall denote
$$
\Upsilon_{N,\epsilon'}(\tau)=\Upsilon_N(\tau)\,\chi_{[2\pi\,-\epsilon',+\infty)}+R_N(\tau)\,\chi_{[0,2\pi\,-\epsilon']}\,,
$$
where $\chi$ denotes the usual characteristic function.

The analyticity of $V$ inside the disc $\vert \tau\vert<2\sqrt\pi\,$ leads to geometric type estimates for the coefficients $V_{2n}$'s. Therefore, one can find positive constants $C$, $A>0$ such that, on the positive real-axis:
$$\vert\Upsilon_{N,\epsilon'}(\tau)\vert\le CA^N\tau^{N+1}.
$$
Let
\begin{equation}\label{equation:UN}
U_N(z)=1+\sum_{n=1}^N\frac{(2n)!\,V_{2n}}{2^n\,n!}\,\bigl(-\frac1z\bigr)^n\,.
\end{equation}
 Using \eqref{equation:asproofU2} and \eqref{equation:asproofGamma}, one can observe that
$$
U(z)-i\sqrt{\frac{2\pi}z}\,U_N(z)=\int_0^\infty e^{-z\tau}\,\Upsilon_{N,\epsilon'}(\tau)\,\frac{d\tau}{\sqrt{\tau}}.
$$
Thus the asymptotic expansion given in \eqref{equation:aslemmaUas} can be read as follows: there exist $C>0$ and $A\gtrsim 2\pi$ such that the following relation holds for any positive integer $N$ and any complex number $z$ with $\Re(z)>0$:
 \begin{equation}\label{equation:asasU}
\bigl\vert U(z)-i\sqrt{\frac{2\pi}z}\,U_N(z)\bigr\vert\le CA^N\frac{\Gamma(N+\frac32)}{(\Re(z))^{N+\frac32}}\,.
 \end{equation}

Finally, we are ready to prove the following

\begin{theorem}\label{theorem:asasu}
The function $u(x)=u(\lambda,x)$ defined in the half-plane $\Re(x)>0$ by \eqref{equation:asu}  can be extended into an analytic function on the Riemann surface $\tilde\CC^*$.

Moreover,  let $\omega(x)=\omega(\lambda, x)$ be as given in \eqref{equation:asomega} for $x\in\Omega_\rho\subset\tilde\CC^*$ with some $\rho>\lambda\rho_0$, and let
$$
u(x)=i\,e^{\frac1{2\lambda}\omega(x)^2+(1-\frac1\lambda)\omega(x)}\,\sqrt{\frac{2\pi}{\omega(x)}}\,\bigl(U_N(\omega(x))+r_N(x)\bigr)\,,
$$
where $U_N$ is given as in \eqref{equation:UN}.
Then there exists $C>0$ and $A\gtrsim 2\pi$ such that
the following estimates hold for all positive integer $N$ and all $x\in\Omega_\rho$:
\begin{equation}\label{equation:asasu}
|r_N(x)|\le CA^N\frac{N!}{(\omega(|x|))^{N+1}}\,.
\end{equation}

\end{theorem}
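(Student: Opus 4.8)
The plan is to deduce both assertions from the saddle-point decomposition \eqref{equation:aslemmaU} together with the non-asymptotic bound \eqref{equation:asasU} and the properties collected in Lemma \ref{lemma:aslemma}. For the analytic continuation I would build on the fact, already noted in \S\ref{subsection:aslemma}, that the Hankel integral \eqref{equation:asu} defines $u$ analytically on $\CC\setminus(-\infty,0]$. To reach the whole surface $\tilde\CC^*$ I would rotate the two tails of $\cC$: since $v(\lambda,x,t)=xt+\frac\lambda2\log^2t$ (see \eqref{equation:asv}) and the term $\frac\lambda2\log^2t$ grows only like $(\ln|t|)^2$ along the tails, convergence is governed entirely by the linear term $xt$, so steering the tails to infinity in a direction where $\Re(xt)\to-\infty$ continues $u$ as $\arg x$ ranges over all of $\RR$. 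On $\Omega_\rho$ this continuation coincides with the one furnished by \eqref{equation:aslemmaU}, whose right-hand side is manifestly analytic there: $\omega(x)=\lambda W(x/\lambda)$ and hence $A(x)$ of \eqref{equation:aslemmaA} are analytic, $U$ is entire, and $U_P,U_Q$ are analytic on $\CC\setminus(-\infty,0]$ by Lemma \ref{lemma:aslemma}(2), while $\Re(\omega(x))>0$ on $\Omega_\rho$ by \eqref{equation:asomegax}.

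For the estimate I would start from \eqref{equation:aslemmaU} and \eqref{equation:aslemmaA} and simply read off the definition of $r_N$. Factoring $i\sqrt{2\pi/\omega(x)}$ out of the bracket in \eqref{equation:aslemmaU} gives
$$i\sqrt{\frac{2\pi}{\omega(x)}}\,r_N(x)=\Bigl(U(\omega(x))-i\sqrt{\frac{2\pi}{\omega(x)}}\,U_N(\omega(x))\Bigr)+U_P(\omega(x))+U_Q(\omega(x)).$$
The first summand is controlled at $z=\omega(x)$ by \eqref{equation:asasU}, i.e.\ by $CA^N\,\Gamma(N+\frac32)/(\Re\omega(x))^{N+\frac32}$; the two remaining summands are exponentially small by Lemma \ref{lemma:aslemma}(3). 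Dividing through by $|i\sqrt{2\pi/\omega(x)}|=\sqrt{2\pi/|\omega(x)|}$ and using \eqref{equation:asomegax} to replace $\Re(\omega(x))$ and $|\omega(x)|$ by $\omega(|x|)$ up to bounded factors (for $\rho$ large enough), the contribution of the first summand becomes $\lesssim A^N\,\Gamma(N+\frac32)/(\omega(|x|))^{N+1}$, where the identity $N+\frac32-\frac12=N+1$ produces exactly the claimed exponent. Since $\Gamma(N+\frac32)/N!=O(\sqrt N)$ grows sub-exponentially, it is absorbed into a slightly larger geometric constant, giving $CA^N\,N!/(\omega(|x|))^{N+1}$ with $A\gtrsim2\pi$.

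The hard part will be to show that the exponentially small but $N$-independent tail $U_P(\omega(x))+U_Q(\omega(x))$ is dominated by the $N$-dependent factorial bound \emph{uniformly} in $N\ge1$. By Lemma \ref{lemma:aslemma}(3) this tail is of size $e^{-\mu_0\Re\omega(x)}\approx e^{-\mu_0\omega(|x|)}$, with $\mu_0\in(0,2\pi)$ free to be chosen close to $2\pi$. On the other hand, minimizing $A^N\,N!/(\omega(|x|))^{N+1}$ over $N$ (by Stirling the minimum sits near $N\approx\omega(|x|)/A$) shows that the worst case of the factorial bound is of order $e^{-\omega(|x|)/A}$. It therefore suffices to arrange $\mu_0>1/A$, which holds very comfortably for $\mu_0$ near $2\pi$ and $A\gtrsim2\pi$; with this choice the tail is swamped by the factorial bound for every $N$, and collecting the two contributions completes the proof. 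The only genuinely delicate point is the coupling of the constants $\mu_0$, $A$ and $\rho$ so that all of these comparisons hold simultaneously and uniformly on $\Omega_\rho$.
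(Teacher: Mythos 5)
Your proof is correct and follows essentially the same route as the paper: both rest on the decomposition \eqref{equation:aslemmaU}, the uniform bound \eqref{equation:asasU}, Lemma \ref{lemma:aslemma}, and the observation that $\min_{N\ge 0}A^N N!/X^{N+1}\gtrsim C'e^{-A'X}$ with $A'$ small, which lets the $N$-dependent factorial bound absorb the exponentially small tails $U_P+U_Q$. The only cosmetic difference is in the analytic continuation to $\tilde\CC^*$, where the paper substitutes $t\mapsto t/x$ to write $u(x)=\frac1x\int_{\cC}e^{t+\frac\lambda2\log^2(t/x)}\,dt$ instead of rotating the tails of $\cC$; the two devices are interchangeable.
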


\begin{proof}If $x\in\RR^+$, one can write \eqref{equation:asu} as follows:
$$
u(x)=\frac1x\,\int_\cC e^{t+\frac\lambda2\log^2\frac tx}\,dt\,,
$$
what allows to make the analytic continuation over $\tilde\CC^*$.

In view of the relations \eqref{equation:asomegax}, \eqref{equation:aslemmaU} and \eqref{equation:asasU}, and by taking  Lemma \ref{lemma:aslemma} into account, it suffices to notice the following relation for any positive real $X>0$:
$$
\min_{N\ge 0}A^N\frac{N!}{X^{N+1}}\gtrsim C'e^{-A'X},
$$
where $C'>0$ and $A\gtrsim A'>0$, both being chosen independently of $X\in]0,X_0]$ for any given $X_0>0$.
\end{proof}

\subsection {Asymptotic expansion of the solution $h(x)$}\label{subsection:asymp h}
Consider the function $h$ given by \eqref{equation:b0h} on the Riemann surface $\tilde\CC^*$. By a suitable change of variables,  we can find immediately that
$$
h(x)=\frac{q\,\sqrt\kappa}{2\pi i}\,\int_\cC e^{xt-\frac1{2\ln q}\log^2(q^{\frac32}t)}\,dt\,,
$$
what gives rise to the following identity: for all $x\in\tilde\CC^*$,
\begin{equation}\label{equation:hu}
h(x)=\frac{u(-\frac1{\ln q},q^{-3/2}x)}{i\,\sqrt{2\pi q\ln (q^{-1})}}\,.
\end{equation}

\begin{theorem}\label{theorem:ashW}Let $V_{2}$, $V_4$, ... be the sequence defined by \eqref{equation:asproofTaylor} with $\lambda=-\frac1{\ln q}$, and let $h$ be the solution of \eqref{equation:a1b0} given as in \eqref{equation:b0h}. Then, we have the following asymptotic expansion as $x\to\infty$ in $\tilde\CC^*$:
\begin{equation}
h(x)=\frac1{\sqrt{q\ln(q^{-1})}}\,e^{-\frac{\ln q}2\tilde\omega^2+(1+\ln q)\tilde\omega}\,\frac1{\sqrt{\tilde\omega}}\,\bigl(1+\sum_{n=1}^N\frac{(2n)!\,V_{2n}}{2^n\,n!}\,
\bigl(-\frac1{\tilde\omega}\bigr)^n+O(\frac1{\tilde\omega^{N+1}})\bigr)\,,
\end{equation}
where $N$ denotes any positive integer and
$$
\tilde\omega=\tilde\omega(q,x)=\omega(-\frac1{\ln q},q^{-3/2}x)\,.
$$
\end{theorem}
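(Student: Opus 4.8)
The plan is to read off Theorem~\ref{theorem:ashW} as a specialization of Theorem~\ref{theorem:asasu}, using the identity~\eqref{equation:hu} that already rewrites $h$ in terms of the canonical integral $u$. First I would fix $\lambda=-\frac{1}{\ln q}$; since $0<q<1$ forces $\ln q<0$, this $\lambda$ is positive, so the standing hypothesis $\lambda>0$ of \eqref{equation:asv} and of Lemma~\ref{lemma:aslemma} is satisfied and Theorem~\ref{theorem:asasu} applies to $u(\lambda,q^{-3/2}x)$ as soon as $|x|$ is large enough that $q^{-3/2}x\in\Omega_\rho$ for an admissible $\rho$. Analyticity of $h$ on $\tilde\CC^*$ is inherited directly from that of $u$ through \eqref{equation:hu}.

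Next I would substitute the asymptotic formula of Theorem~\ref{theorem:asasu} for $u(\lambda,q^{-3/2}x)$ into the right-hand side of \eqref{equation:hu}, writing $\tilde\omega=\omega(-\frac1{\ln q},q^{-3/2}x)$. Using $\frac1\lambda=-\ln q$, the exponent $\frac1{2\lambda}\tilde\omega^2+(1-\frac1\lambda)\tilde\omega$ collapses to $-\frac{\ln q}2\tilde\omega^2+(1+\ln q)\tilde\omega$, which is exactly the exponent in the statement; the factor $i$ produced by Theorem~\ref{theorem:asasu} cancels against the $i$ in the denominator of \eqref{equation:hu}, and $\sqrt{2\pi/\tilde\omega}$ divided by $\sqrt{2\pi q\ln(q^{-1})}$ reduces to $\frac1{\sqrt{q\ln(q^{-1})}\,\sqrt{\tilde\omega}}$. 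The polynomial factor $U_N(\tilde\omega)$ is, by its definition \eqref{equation:UN}, precisely $1+\sum_{n=1}^N\frac{(2n)!\,V_{2n}}{2^n\,n!}(-\frac1{\tilde\omega})^n$, so collecting the surviving constants reproduces the claimed expression verbatim.

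The remainder is controlled by \eqref{equation:asasu}: for fixed $N$ the bound $CA^N N!/(\omega(|q^{-3/2}x|))^{N+1}$ must be shown to be $O(\tilde\omega^{-(N+1)})$. The step I expect to need the most care is precisely this conversion, since \eqref{equation:asasu} is phrased in terms of the real quantity $\omega(|x|)$ whereas the theorem is phrased in the complex variable $\tilde\omega$. This rests on \eqref{equation:asomegax}, which gives $\Re(\tilde\omega)=\omega(|q^{-3/2}x|)+o(1)$, together with the observation that along any ray the imaginary part of $\tilde\omega=\lambda W(q^{-3/2}x/\lambda)$ stays bounded relative to its real part, so that $|\tilde\omega|$ and $\omega(|q^{-3/2}x|)$ grow comparably and the modulus estimate indeed yields the stated $O$-term as $x\to\infty$ in $\tilde\CC^*$. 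Everything else is the routine algebraic simplification of exponents and constants outlined above.
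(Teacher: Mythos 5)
Your proposal is correct and follows exactly the paper's route: the paper proves Theorem \ref{theorem:ashW} in one line by combining Theorem \ref{theorem:asasu} with the identity \eqref{equation:hu}, which is precisely the specialization $\lambda=-\frac1{\ln q}$ and the algebraic simplification you carry out. Your additional care about converting the bound in $\omega(|x|)$ from \eqref{equation:asasu} into an $O(\tilde\omega^{-(N+1)})$ estimate via \eqref{equation:asomegax} is a detail the paper leaves implicit, and you handle it appropriately.
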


\begin{proof}
It follows from Theorem \ref{theorem:asasu} combined with the identity \eqref{equation:hu}.
\end{proof}



\begin{thebibliography}{99}

\bibitem{AAR} G. Andrews, R. Askey \& R. Roy, {\it Special Functions}, Encyclopedia of Mathematics and Its Applications 71, Cambridge University Press, 2000.

\bibitem{Bi} G.~D.~Birkhoff, The generalized Riemann problem for linear differential equations and the allied problems for linear difference and $q$-difference equations, {\it American Acad. Proc.} {\bf 49} (1913), 521-568.

\bibitem{Br} N. G. De Bruijn, Asymptotic methods in analysis, North-Holland Publishing Co.-Amsterdam, 1958.

\bibitem{BMW} B. Van Brunt, J.C. Marshall \& G.C. Wake, An Eigenvalue problem for holomorphic solutions to a certain class of functional differential equatoins, {\it Euro. J. Appl. Math.} {\bf 14} (2003), 571-585.

\bibitem{Ce} J. Cerm\'ak, On a linear differential equations with a proportional delay, {\it Math. Nachr. } {\bf 280} (2007), 495-504.

\bibitem{CGHJK} R. M. Corless, G.~H.~Gonnet,D. E. G. Hare, D.~J.~Jeffrey and D.~E.~Knuth,  On the Lambert $W$ Function, {\it Adv. Comput. Math.} {\bf 5} (1996), 329-359.

\bibitem{DRSZ} L. Di Vizio, J.-P. Ramis, J. Sauloy \& C. Zhang, \'Equations aux $q$-diff\'erences, {\it Gaz. Math.} {\bf 96}  (2003), 20-49.

\bibitem{DZ} L. Di Vizio \& C. Zhang, On $q$-summation and confluence, \emph{Ann. Inst. Fourier}, {\bf 59} (2009), 347-392.

\bibitem{FMOT} L. Fox, D. F. Mayers, J. R. Ockendon,  A. B. Tayler,
  On a functional differential equation, {\it J. Inst. Math. Appl.} {\bf 8} (1971), 271-307.

\bibitem{Is0} A. Iserles, On the generalized pantograph functional-differential equation, {\it Euro. J Appl. Math.} {\bf 4} (1993), 1-38.

\bibitem{Is} A. Iserles, On nonlinear delay differential equations, {\it Trans. Amer. Math. Soc.} {\bf 344} (1994), 441-477.

\bibitem{Ja} F.H. Jackson, On $q$-definite integrals, {\it Q. J. Pure Appl. Math.} {\bf 41} (...), 193-203.

\bibitem{KM} T. Kato \& J.B. McLeod, The functional differential equation $y^{\prime}(x)=ay(\lambda x)+by(x)$, {\it Bull. Amer. Math. Soc.} {\bf  77} (1971), 891-937.

\bibitem{Li} E.-B. Lim, Aymptotic bounds of solutions of the functional differential equation $x'(t)=ax(\lambda t)+bx(t)+f(t)$, $0<\lambda<1$, {\it SIAM J. Math. Anal.} {\bf 9} (1978), 915-920.

\bibitem{Ma} K. Mahler, On a special functional equation,
{\it J. London Math. Soc.} {\bf 15} (1940). 115-123.

\bibitem{MZ} F. Marotte \& C. Zhang,  Multisommabilit\'e des s\'eries entières solutions formelles d'une \'equation aux $q$-diff\'erences lin\'eaire analytique, {\it Ann. Inst. Fourier (Grenoble)} {\bf 50} (2000), 1859–1890.

\bibitem{MBW} J.C. Marshall, B. Van Brunt \& G.C. Wake, A natural boundary for solutions to the second order pantograph equation, {\it J. Math. Anal. Appl.} {\bf 299} (2004), 314-321.

\bibitem{MFB} G.R. Morris, A. Feldstein \& E.W. Boren, The Pragm\'en-Lindel\"of principle and a class of functional differential equations, In {\it Ordinary Differential Equations}, Academic Press, 513-540.

\bibitem{Ol} F.W.J Olver, Asymptotics and special functions, Academic Press, New York-London, 1974.

\bibitem{PRS} D.W. Pravica, N. Randriampiry and M.J. Spurr, Appications of an advanced differential equation in the study of wavelets, {\it Appli. Comput. Harmon. Anal. } {bf 27} (2009), 2-11.

\bibitem{Ra0} J.P. Ramis, About the growth of entire functions solutions of linear algebraic $q$-difference equations, {\it Ann. Fac. Sci. Toulouse Math.} {\bf (6) 1} (1992), 53-94.

\bibitem{Ra} J.P. Ramis, S\'eries divergentes et th\'eories asymptotiques.  {\it
Bull. Soc. Math. France} {\bf 121} (1993), Panoramas et Syntheses, suppl..

\bibitem{RSZ1}J.P. Ramis, J. Sauloy \& C. Zhang, D\'eveloppement
asymptotique et sommabilit\'e des solutions des \'equations
lin\'eaires aux $q$-diff\'erences, {\it C. R. Acad. Sci.
Paris}, Ser. I {\bf 342} (2006), 515-518.

\bibitem{RSZ2} J.P. Ramis, J. Sauloy \& C. Zhang, Analytic classification of linear $q$-difference equations, {\it arXiv:0903.0853}.

\bibitem{SZ} S. Saks \& A. Zygmund, {\it  Analytic functions}, Polskie Towarzystwo Matematyczne, Warszawa-Wroclaw, 1952.

\bibitem{Sa1} J. Sauloy, Syst\`emes aux $q$-diff\'erences singuliers r\'eguliers: classification, matrice de connexion et monodromie, {\it Ann. Inst. Fourier (Grenoble)} {\bf 50} (2000), no. 4, 1021-1071.

\bibitem{Sa2} J. Sauloy, Galois theory of Fuchsian $q$-difference equations, {\it  Ann. Sci. \'Ecole Norm. Sup. (4)} {\bf 36} (2003), 925–968.

\bibitem{Wa} G. N. Watson, Theorems stated by Ramanujan (V) : Approximations connected with $e^x$, {\it
Proc. London Math. Soc. (2)} {\bf 29} (1929), 293-308.

\bibitem{Wr1} E.M. Wright, The linear difference-differential equation with constant coefficients, {\it Proc. Roy. Soc. Edinburgh Sect. A} {\bf  62} (1949), 387-393.

\bibitem{Wr2} E.M. Wright, A non-linear difference-differential equation, {\it J. Reine Angew. Math.} {\bf  194} (1955), 66-87.

\bibitem{Wr3} E.M. Wright, Solution of the quation $ze^z=a$, {\it Proc. Roy. Soc. Edinburgh Sect. A} {\bf  65} (1959), 193-203.

\bibitem{Zh1}C. Zhang, Sur un th\'{e}or\`{e}me du type de Maillet-Malgrange pour
les \'{e}qua\-tions $q$-diff\'{e}rences-diff\'{e}\-ren\-tielles, {\it Asymptotic Analysis} {\bf 17} (1998) 309 - 314.

\bibitem{Zh2} C. Zhang,  D\'eveloppements asymptotiques $q$-Gevrey et s\'eries $Gq$-sommables, {\it Ann. Inst. Fourier} {\bf 49}  (1999), 227-261.

\bibitem{Zh5} C. Zhang, Transformations de $q$-Borel-Laplace au moyen de la fonction th\^eta de Jacobi, {\it C. R. Acad. Sci. Paris S\'er. I} {\bf 331} (2000), 31-34.

\bibitem{Zh3} C. Zhang, Une sommation discr\`ete pour des \'equations aux $q$-diff\'erences lin\'eaires et \`a coefficients analytiques~: th\'eorie g\'en\'erale et exemples, {\it Differential Equations and the Stokes Phenomenon}, p. 309-329, World Scientific, 2002.

\bibitem{Zh4} C. Zhang, La s\'erie enti\`ere $\displaystyle 1+\frac{z}{\Gamma(1+i)}+\frac{z^2}{\Gamma(1+2i)}+\frac{z^3}{\Gamma(1+3i)}+...$ poss\`ede une fronti\`ere naturelle, {\it C. R. Math. Acad. Sci.
Paris}, Ser. I {\bf 349} (2011), 519-522.

\bibitem{Zh6} C. Zhang, On the modular behaviour of the infinite product $(1-x)(1-xq)(1-xq^2)(1-xq^3)...$, {\it
C. R. Math. Acad. Sci. Paris} {\bf 349} (2011), 725-730. 

\end{thebibliography}
\end{document}